\theoremstyle{plain}
\newtheorem{theorem}{Theorem}[section]
\newtheorem{lemma}[theorem]{Lemma}
\newtheorem{corollary}[theorem]{Corollary}
\newtheorem{proposition}[theorem]{Proposition}
\theoremstyle{remark}
\theoremstyle{definition}
\newtheorem{remark}[theorem]{Remark}
\def\RR{\mathbb{R}}
\def\EE{\mathbb{E}}
\def\PP{\mathbb{P}}
\def\NN{\mathbb{N}}
\def\cff{\mathcal{F}}
\def\cgg{\mathcal{G}}
\def\cii{\mathcal{I}}
\def\lt{\left}
\def\rt{\right}
\let\Section=\section
\def\section{\setcounter{equation}{0}\Section}
\title[Branching Gaussian Processes] {Laws of Large Numbers for Supercritical Branching Gaussian Processes}
\author[M. Kouritzin] {Michael A. Kouritzin}
\thanks{Research supported in part by NSERC Discovery grants}
\address{Department of Mathematical and Statistical Sciences, University of Alberta, 632 Central Academic, Edmonton , AB T6G 2R3, Canada}
\email{michaelk@ualberta.ca}
\author[K. L\^e]{Khoa L\^e}
\address{Department of Mathematics, South Kensington Campus, Imperial College London, London, SW7 2AZ, United Kingdom}
\email{n.le@imperial.ac.uk}
\thanks{The second author thanks PIMS for its support through
the Postdoctoral Training Centre in Stochastics during this work}
\author[D. Sezer] {Deniz Sezer}
\address{Department of Mathematics and Statistics, University of Calgary, 2500 University Dr NW, Calgary, AB T2N 1N4, Canada}
\email{adsezer@ucalgary.ca}
\subjclass[2000]{Primary 60H30; Secondary 60H10, 60H15, 60H07, 60G17}
\keywords{fractional Brownian motion; branching processes; laws of large numbers}
\begin{document}
\begin{abstract} A general class of non-Markov, supercritical Gaussian branching particle systems
is introduced and its long-time asymptotics is studied.
Both weak and strong laws of large numbers are developed with
the limit object being characterized in terms of particle motion/mutation.
Long memory processes, like branching fractional Brownian motion and fractional Ornstein-Uhlenbeck
processes with large Hurst parameters, as well as rough processes, like fractional
processes with with smaller Hurst parameter, are included as important examples.
General branching with second moments is allowed and moment measure techniques are utilized.
\end{abstract}
\maketitle
\tableofcontents

\section{Introduction} 
\label{sec:introduction}

Let $\xi=\{\xi_t,t\ge0\}$ be a Gaussian process in $\mathbb R^d$ which starts at $ x$ and has the Volterra representation
\begin{equation}\label{rep:volterra}
	\xi_t=\xi_t(x)=\int_ 0^t K(t,s)dW_s + U_{t} x \,,\ t\ge0,
\end{equation}
where  $W=\{W_s,s\ge0\}$ is a standard Brownian motion in $\RR^d$.
Here, $ x$ is a fixed point in $\RR^d$,  
$t\mapsto U_t$ is a continuous matrix-valued function satisfying
the semigroup property
\(
	U_0y=y\ \text{and } U_tU_sy=U_{t+s}y
\)
for all $s,t\ge 0$ and $y\in\RR^d$, and
$K:\{(t,s)\in(0,\infty)^2:t>s\}\to\RR$ is a measurable kernel such that
\begin{equation*}
	\sigma(t):=\lt(\int_0^t|K(t,s)|^2ds\rt)^{\frac12}
\end{equation*}
is finite for all $t>0$.
Such processes can arise from solving the following stochastic differential equation driven by a fractional Brownian motion $\{B^H_t,t\ge0\}$
\begin{equation}
	d \xi_t=A\xi_tdt+dB^H_t\,,\quad \xi_0= x\,,
\end{equation}
where $H\in(0,1)$ is the Hurst parameter and $A$ is a deterministic matrix.
Throughout, we assume that the representation \eqref{rep:volterra} is canonical, meaning
$\xi$ and $W$ generate the same filtration $\cgg=\{\cgg_t,t\ge0\}$, and refer
to $\xi$ as a Volterra-Gaussian process. 
In particular, fractional Brownian motion is known to be canonical and, therefore, can be considered the proto-typical Volterra-Gaussian process.
Additionally, other common examples include Ornstein-Uhlenbeck and fractional 
Ornstein-Uhlenbeck processes.
Such Volterra-Gaussian processes need not be Markov but rather can have memory.
Still, representation \eqref{rep:volterra} can be used to construct branching processes 
in such a manner that each combined path through all ancestors is a Volterra-Gaussian process
and, given the memory up to a branch time, the motion of the children produced at
that branch time move independently afterwards.
To the authors knowledge, nobody has considered such branching Volterra processes (BVP)
previously so their asymptotic behavior is unknown.
Herein, we study weak and strong laws of large numbers for BVP as time increases to infinity.
The non-Markov memory issue raises questions about: What the appropriate
scaling should be for the anticipation of a non-trivial limit, what the actual limit
should be when the particles can be constrained by long-range dependence
and what methods can be used or adapted in this non-Markov setting.

Interest in spatial branching process asymptotics dates back at least to Watanabe's (1968) study
of branching Markov processes (BMP) and, in particular, branching Brownian motion (BBM)
on $\mathbb R^d$ (\cite{MR0237008}).
He considered the particles alive at a time as a purely-atomic measure, used Fourier techniques
and determined parameters as well as scalings for BBM to avoid local extinction and
have interesting limit behavior.
In particular, he established almost-sure vague convergence in the transient motion case 
to randomly-scaled Lebesgue measure, meaning the mass redistributed itself uniformly
as time goes to infinity regardless how it started but the ``amount" of mass (after scaling) retained
randomness. 
The recurrent motion case has a Gaussian (not Lebesgue) limit and
was treated by Asmussen and Hering \cite{MR0420889} using a different method. 
One of the differences in the two cases is that the rate of convergence is exactly exponential in the later while not so in the former case.
This kind of result has become known as a \emph{strong law of large numbers} and has been extended in various directions by several authors. To name a few, Chen and Shiozawa \cite{MR2352485}, Engl\"ander \cite{MR2500226}, Engl\"ander et al. \cite{MR2641779} have obtained strong laws of large numbers for more general classes of branching Markov processes; Chen et al. \cite{MR2397881}, Engl\"ander \cite{MR2641779},  Wang \cite{MR2644866}, Kouritzin and Ren \cite{MR3131303}, have extended these results
to superprocess limits of branching Markov processes.
In particular, Kouritzin and Ren's result does not require the superprocess to
satisfy the compact support property and strengthens the vague convergence to
so-called shallow convergence, which lies between vague and weak convergence
yet still allows non-finite limits like Lebesgue measure.
Moreover, \cites{MR3395469,MR3010225} have established a key link between strong laws of large numbers
for branching particle systems and those for superprocess limits, using the
idea of skeletons.
It would be interesting to know if their transfer results continue to hold in
our setting where there is memory.

All of the results mentioned above have been for Markov processes.
We study the case where the motion model can have memory, in the Gaussian case.
In particular, we characterize the limit in terms of (the kernel $K$ or) the covariance 
of the Gaussian process and exhibit fractional Brownian motions and
fractional Ornstein-Uhlenbeck processes as our motivating examples.

Our model is motivated in part by Adler and Samorodnitsky's 1995 construction of 
super fractional Brownian motion in \cite{AdlerSamorodnitsky95}, which also supports the 
relevance of our techniques to superprocesses.
To accommodate a variety of branching processes and to ease the transition to superprocesses,
we consider general branching and eschew Fourier techniques in favor of popular superprocess moment-measure techniques. 
We also benefited greatly from the technical branching process
bounding techniques introduced in Asmussen and Hering \cite{MR0420889} and used in Chen and Shiozawa \cite{MR2352485}.
The techniques in this article have been applied to obtain 
asymptotic expansion for supercritical Dawson-Watanabe processes and a certain class Fleming-Viot processes with $\alpha$-stable spatial motions in \cite{Kle1} and \cite{LK} respectively.

We introduce our branching Volterra process model and its atomic measure
notation in the next section.
Section \ref{sec:moment_formulas} of this note contains our basic moment formulae from which
our bounds and asymptotic behavior will later be derived.
Our weak law of large numbers are then established in Section \ref{sec:WLLN}. The proof of our strong law of large number is presented in Section \ref{sec:SLLN}. In Section \ref{sec:examples}, we discuss two key examples of branching particle systems where the particle motions are fractional Brownian motions or fractional Ornstein-Uhlenbeck processes respectively.

\section{Model and main result} 
\label{sec:preliminaries}
We first describe a branching system starting with a single individual particle with \textit{memory}. Let $r$ be a non-negative number.
An individual starting at $\xi_{r}$ will continue the spatial motion from $\xi_0$ to $\xi_{r}$ 
so that the combined trajectory follows the law of the process $\xi$ in \eqref{rep:volterra}. 
We say that this individual has birth memory $\xi[0,r]:= \{\xi_s,0\le s\le r\}$. Her lifetime, $L$, is exponentially distributed with parameter $V>0$ so, 
given that she is alive at time $t$, her probability of dying in the time interval $[t,t+\delta)$ is $V \delta +o(\delta)$. 
When a particle dies at time $d=r+L$, she leaves behind a random number of offspring 
with probability generating function $\psi(s)=\sum_{k=0}^\infty p_k s^k$ and then she is put to the cemetery state $\Lambda$. 
Each of these offspring has birth memory $\xi[0,d]$,   
i.e. the memory starting from $0$ to the time they are born. 
In addition, given the memory of their ancestors, the offspring are independent from each other. 
At each time $t$, the trajectory of each living particle from $0$ to $t$ follows the law of the Volterra-Gaussian process defined in \eqref{rep:volterra}, the locations of individuals who are alive at time $t$ correspond to a measure on $\RR^d$, which is the object of interest. Let us describe it in more detail.

Let $x$ be in $\RR^d$ and $\xi=\xi(x)$ be as defined in \eqref{rep:volterra}. 
We append a cemetery state $\Lambda$ to the state space $\RR^d$ and adopt the convention that $f(\Lambda)=0$ for all functions $f:\RR^d\to\RR$. 
Let $\NN$ ($\NN_0$) be the natural (whole) numbers.
Our branching particle system starts from a single particle with initial memory $\xi[0,r]$, where $r$ is a non-negative number and $\xi_0=x$.
We use multi-indices
\begin{equation*}
\cii:=\{\alpha=(\alpha_0,\dots,\alpha_N): N\in\NN_0, \alpha_j\in \NN\ \forall j\in\NN_0\}\,
\end{equation*} 
to label our particles.
For each $\alpha=(\alpha_0,\dots,\alpha_N)\in\cii$, the \emph{generation} of $\alpha$ is $|\alpha|=N$, $\alpha|_i=(\alpha_0,\dots,\alpha_i)$ with $\alpha|_{-1}=\emptyset$.
We consider an ancestral partial order on $\cii$: $\theta\le \alpha$ iff $\theta$ is an ancestor of $\alpha$ i.e.
\begin{equation*}
\theta\le \alpha \mbox{ if and only if } \theta=\alpha|_i \mbox{ for some } i\le |\alpha|\,.
\end{equation*}
In addition, for two indices $\alpha$ and $\theta$, set
\begin{equation*}
 	|\alpha\wedge \theta|=\max\{k:\alpha_i=\theta_i\quad \forall i\le k\}
\end{equation*}
and write $\alpha\wedge \theta=\alpha|_{|\alpha\wedge \theta|}=\theta|_{|\alpha\wedge \theta|}$, which is the ``greatest common ancestor'' of $\alpha$ and $\theta$.

We will construct our branching particle system using a flow $U$ and a kernel $K$, like
those discussed
in the Introduction, as well as the following family of independent random elements
\begin{equation*}
 	\{\widehat W^\alpha, L^\alpha, S^\alpha:\alpha\in\cii\}
\end{equation*}
on some probability space $(\Omega,\mathcal F,\PP)$.
Here, each $\{\widehat W^\alpha_t,\,t\ge0\}$ is a standard $\RR^d$-valued Brownian motion starting at the origin,
$L^\alpha$ is an exponential random variable with parameter $V$ and 
$S^\alpha$ is an $\NN_0$-valued random variable with probability generating function $\psi$. 
Also, let $S^\emptyset=1$ and
$\cii^l=\{\alpha\in\cii:\alpha_i\le S^{\alpha|_{i-1}}\ \forall i\in\{0,1,...,|\alpha|\}\}$,
the particles that will be born.
The birth time $b^ \alpha$ and death time $d_ \alpha$ of particle $\alpha\in\cii$ are related
by the lifetimes $L^\alpha$ by $d_ \alpha=b^ \alpha+L^\alpha$.
The birth times are defined inductively: $b^{(1)}=r$ and for $\alpha$ such that $|\alpha|\ge1$, $b^ \alpha=d_{\alpha|_{|\alpha|-1}}$.
The driving Brownian motion for particle $\alpha$ is now defined inductively as:
\begin{equation*}
W^{\alpha}_t=\left\{
\begin{array}{ll}
\widehat W^{\alpha}_t & \text{if } |\alpha|=0\\
W^{\alpha|_{|\alpha|-1}}_t & \text{if } t<d_{\alpha|_{|\alpha|-1}},|\alpha|>0\\
W^{\alpha|_{|\alpha|-1}}_{d_{\alpha|_{|\alpha|-1}}}+\widehat W^{\alpha}_{t-d_{\alpha|_{|\alpha|-1}}} &\text{if } t\ge d_{\alpha|_{|\alpha|-1}},|\alpha|>0
\end{array}
\right.
\end{equation*}
and the particle location at time $t$ as:
\begin{equation}\label{partloc}
	\xi_t^\alpha
	=\left\{
	\begin{array}{ll}
\widehat \xi^{\alpha}_{t} & \text{if } b^\alpha \le t< d_\alpha \text{ and }  \alpha\in\cii^l\\
\Lambda &\text{if }  \alpha\notin\cii^l\\
\Lambda &\text{if } t<b^\alpha\text{ or }t\ge d_\alpha
\end{array}
\right.,
\end{equation}
where for every $t\ge0$ and $\alpha$ such that $\alpha_0=1$,
\begin{equation*}
\widehat \xi_t^\alpha=\widehat \xi_t^\alpha(x)=\int_{0}^t K(t,s)dW^\alpha_s+U_{t} x \ \forall t\ge0\,.
\end{equation*}
It is easy to see that each $W^\alpha$ is a standard Brownian motion starting at the origin. Hence, $\widehat \xi^\alpha(x)$ has the same distribution
as $\xi(x)$ defined in (\ref{rep:volterra}) and $\{\xi^\alpha\}_{\alpha\in\cii} $ has all the properties described in the first paragraph in this section.
The first case in (\ref{partloc}) where $\xi_t^\alpha=\Lambda$ corresponds to the situation that the
particle was never alive while the second is that it is not alive at time $t$.
For each $t>0$, the collection of all particles alive at time $t$ is $\cii_t=\{\alpha\in\cii:\xi^\alpha_t\ne \Lambda\}$	and the number of particles alive is $|\cii_t|$, the cardinality of $\cii_t$. 
	The locations of particles alive at time $t$ are stored in an atomic measure
	\begin{equation}\label{def:Xt}
	X_t:= \sum_{\alpha\in\cii_t}\delta_{\xi^\alpha_t}= \sum_{\alpha\in\cii^l}1_{[b^\alpha,d_ \alpha)}(t)\delta_{\xi^\alpha_t},\quad t\ge r \,.
	\end{equation}
The process $X=\{X_t,t\ge r\}$ is called a branching particle system starting from $\xi[0,r]$. Its law is denoted by $\PP_{\xi[0,r]}$. The corresponding expectation is denoted by $\EE_{\xi[0,r]}$.
Finally, if $r=0$ and $\xi_0=x$, we simply write $\PP_x$ and $\EE_x$ respectively.

Let us now describe the particle and branching system information. Let $\cgg^\alpha=\{\cgg^\alpha_t\}_{t\ge 0}$ be the (raw) filtration generated by $\widehat\xi^\alpha$ for each $\alpha\in\cii^l$. 
$\{\cff_t\}_{t\ge 0}$ is the filtration generated by the following random variables:
	\begin{itemize}
	\item $1_{[b^\alpha,d_ \alpha)}(s), \widehat \xi_s^\alpha$ for all $s\in[0, t]$ and $\alpha\in\cii^l$,
		\item $S^\theta$ for all $\theta<\alpha$, where $\alpha\in\cii^l$ such that $1_{[b^\alpha,d_ \alpha)}(s)=1$ for some $s\in[0,t]$,
		\item $b^\theta$ for all $\theta\le\alpha$, where $\alpha\in\cii^l$ such that $1_{[b^\alpha,d_ \alpha)}(s)=1$ for some $s\in[0,t]$.
	\end{itemize}
	We further assume that $\{\cff_t\}_{t\ge 0}$ satisfies the usual condition, i.e. $\{\cff_t\}_{t\ge 0}$ is right-continuous  and $\cff_0$ contains the $\PP$-negligible sets.
	Heuristically, $\cff_t$ includes all information of the branching system $X$ up to time $t$. 
In particular, $X_t$ is $\cff_t$-measurable for all $t\ge r$. 
To ease the notation in Section \ref{sec:WLLN} (to follow), we also take a generic particle $\xi$,
defined in (\ref{rep:volterra}), to be on $(\Omega,\mathcal F,\PP)$ and to generate filtration $\mathcal G$.

We can express $X_t$ (under $\PP_{\xi[0,r]}$) in terms of the particle systems at an earlier time $s$ ($s\ge r$) through the use of memory.
The memory of particle $\alpha$ at time $s$ is given by
\begin{equation*}
\xi^{\alpha}[0,s]=\lt\{\int_{0}^{t}K(t,u)dW^{\alpha}_u+U_{t}x,\ 0\le t\le s \rt\}.
\end{equation*}
(This includes ancestrial memory.)
For each $\alpha\in\cii_s$, let $X^{\alpha,s}=\{X^{\alpha,s}_t,t\ge s\}$ be the branching particle system continuing from a single individual with memory $\xi^\alpha[0,s]$. 
Then, 
\begin{equation}\label{eqn:XtXs}
X_t=\sum_{\alpha\in\cii_s}X^{\alpha,s}_{t}\quad \forall t\ge s\ge r.
\end{equation}
While our motion may be non-Markov, the number of particles counting process, $\{X_t(1),\ t\ge r\}$ under $\PP_{\xi[0,r]}$,	is a continuous time Galton-Watson process starting at time $r$. Given $\xi[0,r]$, its law is independent of the spatial motions $\{\widehat \xi^\alpha\}_{\alpha\in\cii}$. 
We collect some well-known properties of this process in the following remark, denoting by $\beta$  the \textit{branching factor} $\beta=V(\psi'(1^-)-1)$, provided that $\psi'(1^-)$ is finite.

\begin{remark}\label{rem:GW} It follows from classical theory of branching processes (cf. \cite{MR2047480}) that:
\begin{enumerate}[(i)]
\item If $\psi'(1^-)<\infty$, then for all $t\ge r$
\begin{equation*}
	\EE_{\xi[0,r]}X_t(1)=e^{\beta(t-r)}\,.
\end{equation*}
\item If $\psi''(1^-)<\infty$, then for all $t\ge r$
\begin{equation*}
	\EE_{\xi[0,r]}(X_t(1))^2=\lt\{
	\begin{array}{lr}
		e^{\beta(t-r)}+ \psi''(1^-)V\beta^{-1} (e^{2\beta(t-r)}-e^{\beta(t-r)})&\mbox{if }\beta>0\,,
		\\1+\psi''(1^-)V(t-r)&\mbox{if }\beta=0\,.
	\end{array}
	\rt.
\end{equation*}
	In particular, $\sup_{s\in[r,t]}\EE_{\xi[0,r]}(X_s(1))^2<\infty$ for all $t\ge r$.
	\item $\{e^{-\beta (t-r)} X_t(1),t\ge r\}$ is a non-negative $\{\cff_t,t\ge r\}$-martingale with respect to $\PP_{\xi[0,r]}$. 
\end{enumerate}
\end{remark}
Throughout the paper, we assume
\begin{enumerate}[(C0)]
	\item\label{c0} $\psi'(1^-)<\infty$, $\psi''(1^-)<\infty$ and $\beta>0$\,.
\end{enumerate}
Because of the restriction $\beta>0$, our branching system is supercritical. It is known (\cite{MR2047480}) that under \ref{c0}, there exists a non-trivial random variable $F$ such that $\PP_{\xi[0,r]}$-a.s.
\begin{equation}\label{def:F}
	\lim_{t\to\infty}e^{-\beta t}X_{t}(1)=e^{-\beta r} F\,.
\end{equation}
Note that in general, $F$ may depend on $\xi[0,r]$. However, given $\xi[0,r]$, $F$ is independent of the spatial motions $\{\widehat \xi^\alpha\}_{\alpha\in \cii}$.

Next, we are going to describe our main result. To do this, we need to introduce some more notation. We consider a branching system $X$ starting with a single memory $\xi[0,r]$, $r\ge0$.
We remark that the memory $\xi[0,r]$ itself is a trajectory of a stochastic process, which can  behave oddly. For our purpose, the following condition on the memories are enforced throughout.
\begin{description}
	\item[Typical memory] $\xi[0,r]$ is \textit{typical} in the sense that its driving Brownian motion $W$ satisfies
\begin{equation}\label{TypMem}
		\lim_{t\to\infty}\frac{|\int_0^r K(t,u)dW_u| }{\lt(\int_0^t|K(t,u)|^2du\rt)^{1/2}}=0\,.
	\end{equation}
\end{description}
Note that memories of zero length are typical in the above sense.
For each $t\ge s>0$, the random variables $\int_0^s K(t,u)dW_u$ and $\int_s^tK(t,u)dW_u$ are centered normal random variables with standard deviations
\begin{equation}
\sigma_1(t,s)=\lt(\int_0^s|K(t,u)|^2du \rt)^{\frac12}\,\mbox{ and }\,\sigma_2(t,s)=\lt(\int_s^t|K(t,u)|^2du \rt)^{\frac12}
\end{equation}
(so $\sigma^2(t)=\sigma^2_1(t,s)+\sigma^2_2(t,s)$ for all $0\le s\le t$) and densities 
\begin{equation}\label{eqn_pj}
p_j(t,s,x)=(2 \pi \sigma_j^2(t,s))^{-\frac d2}\exp\lt\{-\frac{|x|^2}{2 \sigma^2_j(t,s)}\rt\}\,\text{for }j=1,2.
\end{equation}
To describe our strong law of large numbers,  we consider the following conditions:
\begin{enumerate}[(C\arabic*)]
\item\label{c1} $\displaystyle \lim_{t\to\infty}e^{-\beta t}\sigma^d(t)=0$ and $\displaystyle\lim_{t\to\infty}\sigma(t) =\ell$ for some $\ell\in(0,\infty]$.
	\item \label{con:U} There exists a $d\times d$-matrix $U_\infty$ such that $\displaystyle\lim_{t\to\infty}\frac{U_tx}{\sigma(t)}=U_\infty x$ for every $x\in\RR^d$.
	\item\label{con:s1s} There exists a function $b: (0,\infty)\rightarrow (0,\infty)$ such that $\lim_{t\rightarrow\infty}e^{-\beta  b(t)}\sigma^d(t) =0$, $b(t)<t$ for $t$ sufficiently large, and 
	\[\lim_{t\to\infty}\frac{\sigma_1(t,b(t))\sqrt{\ln t} }{\sigma(t) }=0.\]
	\item \label{con:sl.tnmax} Let $\xi[0,r]$ be our fixed typical memory with $r\ge0$. There exists an increasing sequence $\{t_n\}$ in $(r,\infty)$ which satisfies
	\begin{equation}\label{limtn}
	 	\lim_{n\to\infty}(t_{n+1}-t_n)=0\,,\quad \lim_{n\to\infty}\frac{\sup_{t\in[t_n,t_{n+1}]}\sigma(t)}{\sigma(t_n)}=\lim_{n\to\infty}\frac{\inf_{t\in[t_n,t_{n+1}]}\sigma(t)}{\sigma(t_n)}=1\,,
\end{equation} 
\begin{equation}\label{sumtn}
t_n\ge n^\gamma\ \text{ for some }\gamma>0,\ \ \ \sum_{n=2}^\infty e^{-\beta b(t_{n-1})}\sigma^d(t_n)<\infty\,,
\end{equation}
and
\begin{equation}\label{sumPmax}
	\sum_{n=1}^\infty \sigma^{d}(t_n)\PP_{\xi[0,r]}\lt(\sup_{u,v\in[t_n,t_{n+1}]}\lt|\xi_u- \xi_v \rt|\ge \epsilon \rt)<\infty
\end{equation}
	for every $\epsilon>0$ sufficiently small.
\end{enumerate}
We emphasize that $U_\infty$ is the limit of $U_t$ normalized by $\sigma(t)$ as $t\to\infty$. An example of a Gaussian process satisfying \ref{con:U} is the process
\begin{equation}\label{exp1}
	\xi_t=e^t \xi_0+\int_0^t e^sdW_s \quad\forall t\ge0\,.
\end{equation}
We will see at the end of Section \ref{sec:examples} that  branching systems corresponding to the above process with sufficient large branching factors satisfy the weak law of large numbers.

Our main result is the following theorem. 
\begin{theorem}\label{thm:SLLN}
 	Let $\{X_t\}_{t\ge r}$ be a branching system starting with  typical memory $\xi[0,r]$ of length $r\ge0$. Let $F$ be the random variable in \eqref{def:F}. Assume that conditions \ref{c0}-\ref{con:sl.tnmax} are satisfied. With $\PP_{\xi[0,r]}$-probability one, we have
\begin{equation}\label{eqn:slln}
\lim_{t\to\infty}e^{-\beta t}\sigma^d(t)X_t(g)=e^{-\beta r} F (2 \pi)^{-\frac d2}\int_{\RR^d}\exp\lt\{-\frac12\lt|U_\infty \xi_0-\frac{y}\ell \rt|^2\rt\}g(y)dy
\end{equation}
for every bounded continuous function $g:\RR^d\to\RR$ if $\ell$ is finite, and for every continuous function $g:\RR^d\to\RR$ such that $\sup_{x\in\RR^d}e^{\epsilon|x| }|g(x)|<\infty$ for some $\epsilon>0$ if $\ell$ is infinite.
\end{theorem}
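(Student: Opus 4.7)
The proof follows the classical Asmussen--Hering / Chen--Shiozawa scheme (cf.\ \cite{MR0420889}, \cite{MR2352485}), adapted to the Gaussian-memory setting: establish \eqref{eqn:slln} almost surely along the discrete sequence $\{t_n\}$ from \ref{con:sl.tnmax} via a conditional second-moment bound and Borel--Cantelli, then interpolate to continuous time using \eqref{sumPmax}. Write $M(t):=e^{-\beta t}\sigma^d(t)$ and $C'_g:=(2\pi)^{-d/2}\int_{\RR^d}g(y)\exp(-|U_\infty\xi_0-y/\ell|^2/2)\,dy$, so the claim becomes $M(t)X_t(g)\to e^{-\beta r}F\,C'_g$ a.s.

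Fix $s\ge r$ and condition on $\cff_s$. The branching decomposition \eqref{eqn:XtXs} and the conditional independence of $\{X^{\alpha,s}\}_{\alpha\in\cii_s}$ given their memories reduce the problem to a sum over $|\cii_s|$ independent subsystems. The many-to-one formula of Section \ref{sec:moment_formulas} expresses the conditional mean as
\[
\EE_{\xi[0,r]}[M(t)X_t(g)\mid\cff_s] = e^{-\beta s}\sigma^d(t)\sum_{\alpha\in\cii_s}\int_{\RR^d}g(y)\,p_2(t,s,y-U_t\xi_0-Y^\alpha_{t,s})\,dy,
\]
where $Y^\alpha_{t,s}:=\int_0^s K(t,u)\,dW^\alpha_u$. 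Condition \ref{con:U} controls $U_t\xi_0/\sigma(t)\to U_\infty\xi_0$, condition \ref{c1} replaces $1/\sigma(t)$ by $1/\ell$ in the exponent, and a Gaussian concentration / Borel--Cantelli argument along $\{t_n\}$ driven by $\sigma_1(t,s)\sqrt{\ln t}/\sigma(t)\to 0$ (granted by \ref{con:s1s}, since $s<b(t)$ eventually) ensures $Y^\alpha_{t,s}/\sigma(t)\to 0$ a.s.\ simultaneously for all $\alpha\in\cii_s$. Since $|\cii_s|<\infty$ a.s., each summand tends to $C'_g$ and $\lim_{t\to\infty}\EE_{\xi[0,r]}[M(t)X_t(g)\mid\cff_s]=e^{-\beta s}X_s(1)\,C'_g$ almost surely.

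The core step is the conditional-variance bound
\[
\mathrm{Var}_{\xi[0,r]}(M(t)X_t(g)\mid\cff_s)=M(t)^2\sum_{\alpha\in\cii_s}\mathrm{Var}(X^{\alpha,s}_t(g)\mid\xi^\alpha[0,s]),
\]
where the second-moment formula of Section \ref{sec:moment_formulas} expresses each per-subsystem variance through an integral $\int_s^t e^{\beta(u-s)+2\beta(t-u)}\EE[\Phi_t(u)^2\mid\cdot]\,du$ with $\Phi_t(u):=\EE[g(\xi_t)\mid\xi[0,u]]$ a Gaussian convolution. Splitting at $u=b(t)$, one uses a Cauchy--Schwarz density bound $|\Phi_t(u)|^2\lesssim\sigma^{-2d}(t)$ on $u\le b(t)$ (where \ref{con:s1s} ensures $\sigma_2(t,u)\asymp\sigma(t)$), together with the trivial bound $|\Phi_t(u)|\le\|g\|_\infty$ combined with $e^{-\beta u}\le e^{-\beta b(t)}$ on $u>b(t)$. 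After multiplication by $M(t)^2$ the resulting bound is comparable to $e^{-\beta b(t)}\sigma^d(t)$, plus lower-order contributions tamed by \ref{c1}. Chebyshev's inequality and the summability in \eqref{sumtn} make $\sum_n\PP_{\xi[0,r]}(|M(t_n)X_{t_n}(g)-\EE_{\xi[0,r]}[M(t_n)X_{t_n}(g)\mid\cff_s]|>\epsilon\mid\cff_s)$ finite a.s., so Borel--Cantelli yields a.s.\ convergence along $\{t_n\}$. Letting $s\to\infty$ through integers and invoking $e^{-\beta s}X_s(1)\to e^{-\beta r}F$ a.s.\ from \eqref{def:F} gives \eqref{eqn:slln} along $\{t_n\}$.

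To interpolate from $t_n$ to continuous $t$, we control $\sup_{t\in[t_n,t_{n+1}]}|M(t)X_t(g)-M(t_n)X_{t_n}(g)|$ by separating three contributions: the scaling error (negligible by \eqref{limtn}), the particle-displacement error (bounded via \eqref{sumPmax} and the uniform continuity of $g$ on a compact set whose complement is suppressed by the limit integrand), and branching in $[t_n,t_{n+1}]$ (whose contribution vanishes because $t_{n+1}-t_n\to 0$). A further Borel--Cantelli application driven by \eqref{sumPmax} lifts discrete-time convergence to \eqref{eqn:slln}, and a standard truncation argument extends the test class from compactly-supported bounded continuous functions to the full admissible class: for $\ell<\infty$ the Gaussian weight integrates any bounded continuous $g$, while for $\ell=\infty$ the exponential decay of $g$ does the same. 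The principal obstacle is the second-moment estimate and the calibration of the auxiliary rate $b(t)$: condition \ref{con:s1s} is designed precisely so that the $u\le b(t)$ regime (where the Gaussian density is nearly flat at scale $\sigma(t)$) and the $u>b(t)$ regime (where late branchings must be suppressed by exponential decay) can be bounded at a common rate matching the summability in \eqref{sumtn}.
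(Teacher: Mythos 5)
Your lattice-times step contains a genuine gap. You fix $s\ge r$, condition on $\cff_s$, and claim that Chebyshev plus the summability in \eqref{sumtn} give $\sum_n\PP_{\xi[0,r]}\bigl(|M(t_n)X_{t_n}(g)-\EE_{\xi[0,r]}[M(t_n)X_{t_n}(g)\mid\cff_s]|>\epsilon\bigr)<\infty$, hence a.s.\ convergence of the fluctuation to zero, and only afterwards let $s\to\infty$. This cannot work: for a \emph{fixed} $s$ the second-moment bound of Proposition \ref{prop:PXtFt0}, after multiplication by $M(t)^2$, is of order $e^{-\beta s}\sigma^d(t_n)$, which is not summable in $n$ (and diverges when $\ell=\infty$, e.g.\ for fractional Brownian motion); the series in \eqref{sumtn} involves $e^{-\beta b(t_{n-1})}\sigma^d(t_n)$, i.e.\ a conditioning time that grows with $n$, so invoking it for fixed $s$ is a mismatch. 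Worse, the statement you are trying to prove by Borel--Cantelli is simply false: as $t\to\infty$ with $s$ fixed, $M(t)X_t(g)\to e^{-\beta r}F\,C'_g$ while $\EE_{\xi[0,r]}[M(t)X_t(g)\mid\cff_s]\to e^{-\beta s}X_s(1)\,C'_g$, and $F$ remains genuinely random given $\cff_s$, so the difference does not tend to zero. The repair is exactly the paper's route: condition on $\cff_{b(t_{n-1})}$ with $b(t_{n-1})\to\infty$, so the variance bound becomes $e^{-\beta b(t_{n-1})}\sigma^d(t_n)$ (summable by \eqref{sumtn}), and then control the conditional mean \emph{uniformly over the $\approx e^{\beta b(t_{n-1})}$ particles alive at time $b(t_{n-1})$}. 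That uniformity is where your sketch is silent: it is achieved by splitting each particle's accumulated noise $\eta^\alpha_n=\int_r^{b(t_{n-1})}K(t_n,u)\,dW^\alpha_u$ on the event $\{|\eta^\alpha_n|<a_n\sigma_n\}$ with $a_n=2\sqrt{\log n}$, handling the complement by Gaussian tails and Borel--Cantelli, and it is precisely why \ref{con:s1s} carries the factor $\sqrt{\ln t}$ rather than the weaker (C3') used for the weak law. Your remark that ``$|\cii_s|<\infty$ a.s.'' makes the per-particle convergence automatic evaporates once the conditioning time grows.

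Two further points are under-argued, though they are secondary to the above. In the interpolation from $\{t_n\}$ to continuous time, bounding $\sup_{t\in[t_n,t_{n+1}]}$ of the contribution of particles that branch or die in $[t_n,t_{n+1}]$, and of the conditional expectation $\EE_{\xi[0,r]}(X_{t_{n+1}}(1_{D^\epsilon}f_a)\mid\cff_s)$ over $s\in[t_n,t_{n+1}]$, requires Doob's maximal inequality for that martingale together with a second Borel--Cantelli (the paper's $X^*_n$, $Y^*_n$, $Z_n$ terms and the enlarged/shrunken sets $D^\epsilon$, $D_\epsilon$); ``the branching contribution vanishes because $t_{n+1}-t_n\to0$'' is not by itself a proof of the suprema bounds. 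Finally, passing from indicator-type test functions to \emph{all} bounded continuous $g$ (when $\ell<\infty$) or all continuous $g$ with exponential decay (when $\ell=\infty$) is done in the paper via the countable strongly separating class of Lemma \ref{countable} applied on the event $\{F>0\}$, with the case $F=0$ treated by domination; a bare ``standard truncation argument'' does not account for mass outside compact sets without using the convergence of $X_t(f_a)$ itself.
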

For clarity, in the case $\ell=\infty$, we take $\frac{y}\ell=0$ and (\ref{eqn:slln}) simplifies.

The following result is an immediate consequence.
\begin{corollary}
	Under the hypothesis of Theorem \ref{thm:SLLN}, we assume in addition that $\ell$ is finite. Then with $\PP_{\xi[0,r]}$-probability one, as $t\to\infty$ the measure $e^{-\beta (t-r)}X_t$ converges weakly to the measure $F(2 \pi\ell^2)^{-\frac d2}\exp\{-\frac12|U_\infty \xi_0-\frac{y}\ell|^2 \}dy$, which is a multiple of $F$ and a Gaussian measure. 
\end{corollary}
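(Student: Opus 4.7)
The plan is to deduce the corollary as a direct rescaling of Theorem \ref{thm:SLLN}. By condition \ref{c1} and the hypothesis $\ell<\infty$, we have $\sigma(t)\to\ell$ and hence $\sigma^d(t)\to\ell^d\in(0,\infty)$. Multiplying \eqref{eqn:slln} by $e^{\beta r}/\sigma^d(t)$ and using $(2\pi)^{-d/2}\ell^{-d}=(2\pi\ell^2)^{-d/2}$ yields, $\PP_{\xi[0,r]}$-a.s.,
\[
\lim_{t\to\infty}e^{-\beta(t-r)}X_t(g) = F(2\pi\ell^2)^{-\frac d2}\int_{\RR^d}\exp\Bigl\{-\frac12\Bigl|U_\infty\xi_0-\frac{y}{\ell}\Bigr|^2\Bigr\}g(y)\,dy =: \mu(g)
\]
for every bounded continuous $g:\RR^d\to\RR$, where $\mu$ is the candidate Gaussian limit measure appearing in the statement.

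The only remaining issue is that, a priori, the $\PP_{\xi[0,r]}$-null set on which convergence could fail may depend on $g$; so one must extract a single full-probability event on which $e^{-\beta(t-r)}X_t\to\mu$ weakly. Since $C_c(\RR^d)$ is separable in the supremum norm, I would fix a countable dense subfamily $\{g_k\}\subset C_c(\RR^d)$ and intersect the full-probability events on which \eqref{eqn:slln} holds for each $g_k$. On that intersection, a routine $\epsilon/3$ triangle-inequality argument (using the uniform bound $\sup_t e^{-\beta(t-r)}X_t(1)<\infty$ guaranteed by Remark \ref{rem:GW}(iii)) gives vague convergence of $e^{-\beta(t-r)}X_t$ to $\mu$.

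To upgrade vague convergence to weak convergence, I would use the total mass: by \eqref{def:F} we have $e^{-\beta(t-r)}X_t(1)\to F$ $\PP_{\xi[0,r]}$-a.s., and the change of variables $z=y/\ell$ together with the identity $\int_{\RR^d}e^{-|z-U_\infty\xi_0|^2/2}\,dz=(2\pi)^{d/2}$ shows $\mu(\RR^d)=F$. Since the limit measure has total mass equal to the limit of total masses, no mass can escape to infinity, and vague convergence automatically strengthens to weak convergence on the chosen full-probability event. The mildly delicate point in this proof is this last tightness/total-mass matching step; everything else is a direct rescaling of Theorem \ref{thm:SLLN}.
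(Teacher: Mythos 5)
Your proof is correct, and the core of it -- dividing \eqref{eqn:slln} by $\sigma^d(t)\to\ell^d$ and absorbing $\ell^{-d}$ into $(2\pi\ell^2)^{-d/2}$ -- is exactly why the paper calls the corollary an immediate consequence. The extra machinery you add (a countable dense family in $C_c(\RR^d)$, vague convergence, then the total-mass matching $\mu(\RR^d)=F$ to upgrade to weak convergence) is sound but not needed: the statement of Theorem \ref{thm:SLLN} already asserts a single $\PP_{\xi[0,r]}$-full event on which \eqref{eqn:slln} holds simultaneously for \emph{all} bounded continuous $g$ (this uniformity is precisely what Step 3 of its proof, the event $\Omega^*$ together with Lemma \ref{countable}, delivers), and since $g\equiv 1$ is itself bounded continuous, convergence of total masses is already contained in the theorem's conclusion. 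Weak convergence of the finite measures $e^{-\beta(t-r)}X_t$ is by definition convergence of the integrals of every bounded continuous function, so no vague-to-weak tightness step is required; your detour only buys robustness against a per-$g$ null set that the theorem does not actually have.
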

It is worth noting that the random variable $F$ in \eqref{eqn:slln} is independent of the spatial motions $\{\xi^\alpha\}_{\alpha\in\cii}$. Only two characteristics of the spatial motions are reflected in the limiting object, namely $\ell$ and $U_\infty$. If the spatial motions are either fractional Brownian motions or fractional Ornstein-Uhlenbeck processes, $U_\infty\equiv0$ (see Section \ref{sec:examples} for details).

Let us explain the heuristic reasons behind \eqref{eqn:slln}. Suppose for simplicity that $g$ is a continuous function with compact support. From \eqref{def:Xt}, we can write
\begin{equation*}
	X_t(g)=\sum_{\alpha\in\cii_t}g(\xi_t^\alpha)\,.
\end{equation*}
As $t\to\infty$, $|\cii_t|=X_t(1)$ tends to infinity with rate $e^{\beta t}$. 
Hence, one anticipates a large number effect: 
\begin{equation*}
	\frac1{X_t(1)}\sum_{\alpha\in\cii_t} g(\xi_t^\alpha)\approx\lt[\EE_{\xi[0,r]} g(\xi_t)\rt]\,\text{as }t\to\infty.
\end{equation*}
We observe that $e^{-\beta t}X_t(1)$ converges to $e^{-\beta r}F$ and
\begin{align}
	&\lim_{t\to\infty}\sigma^d(t) \EE_{\xi[0,r]} g(\xi_t)
	\nonumber\\&=\lim_{t\to\infty}(2 \pi)^{-\frac d2}\frac{\sigma^d(t)}{\sigma^d_2(t,r)} \int_{\RR^d}\exp\lt\{-\frac12\lt|\frac{\int_0^rK(t,u)dW_u}{\sigma_2(t,r)} -\frac{U_t \xi_0}{\sigma_2(t,r)}-\frac1{\sigma_2(t,r)}y\rt|^2 \rt\}g(y)dy
	\label{lim:Efxi}
\end{align}
As we will see later in Remark \ref{rem:ss}, due to \ref{con:s1s}, we can replace $\sigma_2(t,r)$ by $\sigma(t)$ without changing the value of the limit. The first term inside the exponential vanishes since $\xi[0,r]$ is a typical memory. The limits of the remaining terms are asserted by the conditions \ref{c1} and \ref{con:U}. Hence, the limit in \eqref{lim:Efxi} is
\begin{equation*}
	(2 \pi)^{-\frac d2}\int_{\RR^d}\exp\lt\{-\frac12\lt|U_\infty \xi_0-\frac1{\ell}y\rt|^2 \rt\}g(y)dy\,.
\end{equation*}
One discovers the identity \eqref{eqn:slln}. An important observation from this argument is that the long term dynamic of $X_t(g)$ is factorized into the long term dynamics of $X_t(1)$ and $\EE_{\xi[0,r]}g(\xi_t)$.

We note that condition \ref{con:sl.tnmax} is not employed in the above heuristic argument. 
It is not clear whether Theorem \ref{thm:SLLN} can be proved without \ref{con:sl.tnmax}. A rigorous proof of Theorem \ref{thm:SLLN} is presented in Section \ref{sec:SLLN}, where we first obtain almost convergence along a suitable sequence $\{t_n\}$, then condition \ref{con:sl.tnmax} is used to transfer the convergence to continuous time.

Throughout the paper the notation $A\lesssim B$ means $A\le CB$ for some constant $C>0$. 

\section{Moment formulae} 
\label{sec:moment_formulas}

In this section, we will develop the moment formulae for our BVPs that will
be the basic tools for our laws of large numbers to follow in later sections. 
In what follows, let us fix $t\ge s\ge r\ge 0$ and consider a branching system $X$ starting with a memory $\xi[0,r]$. We denote $\EE_{\xi[0,r],s}=\EE_{\xi[0,s]}(\,\cdot\,|\cff_s)$.
We are interested in explicit formulae for the following quantities
\begin{equation*}
	m_ \phi(t,\xi[0,r]):= \EE_{\xi[0,r]}\sum_{\alpha\in\cii_t} \phi(\xi^\alpha)\,,\quad\mbox{and}\quad
	 m_{f_1,f_2}(t,\xi[0,r])=\EE_{\xi[0,r]}[X_t(f_1)X_t(f_2)]\,,
\end{equation*}
where $\phi:(\RR^d)^{[0,\infty)}\to\RR$ is a measurable functional and $f_1,f_2:\RR^d\to\RR$ are measurable functions. 
In the special case when $\phi(\xi)=f(\xi_t)$, we have $m_ \phi(t,\xi[0,r])= m_f(t,\xi[0,r])=\EE_{\xi[0,r]}X_t(f)$.
\begin{lemma}\label{lem:firstmom}
	Assume that $\psi'(1^-)<\infty$. For every measurable functional $\phi:(\RR^d)^{[0,\infty)}\to\RR$ which is either non-negative or satisfies
	\begin{equation*}
		\EE_{\xi[0,r]}|\phi(\xi)|<\infty\,,
	\end{equation*}
	we have
	\begin{equation}\label{eqn:m1}
		m_ \phi(t,\xi[0,r])=e^{\beta(t-r)}\EE_{\xi[0,r]}\phi(\xi)\,.
	\end{equation}
\end{lemma}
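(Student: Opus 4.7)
The plan is to rewrite the random sum as a sum over the deterministic index set $\cii$ by using the convention
\begin{equation*}
\sum_{\alpha\in\cii_t}\phi(\xi^\alpha)=\sum_{\alpha\in\cii}1_{\cii^l}(\alpha)1_{[b^\alpha,d_\alpha)}(t)\phi(\widehat\xi^\alpha),
\end{equation*}
where for $\alpha\in\cii_t$ the relevant trajectory is the full Volterra-Gaussian process $\widehat\xi^\alpha$. Let $\mathcal B:=\sigma(L^\theta,S^\theta:\theta\in\cii)$ denote the purely genealogical $\sigma$-algebra generated by the lifetimes and offspring counts. The events $\{\alpha\in\cii^l\}$ and $\{t\in[b^\alpha,d_\alpha)\}$, and hence $\cii_t$, are $\mathcal B$-measurable; under $\PP_{\xi[0,r]}$ the memory $\xi[0,r]$ (equivalently, the restriction of the root's Brownian motion to $[0,r]$) is also fixed.

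The key identity to establish is
\begin{equation*}
\EE_{\xi[0,r]}\bigl[\phi(\widehat\xi^\alpha)\bigm|\mathcal B\bigr]=\EE_{\xi[0,r]}\phi(\xi)\qquad\text{for every }\alpha\in\cii^l.
\end{equation*}
Once a realization of $\mathcal B$ is fixed, the branching times $d_{\alpha|_i}$ ($i<|\alpha|$) become deterministic, so the driving process $W^\alpha$ is a concatenation of the independent standard Brownian motions $\widehat W^\theta$ ($\theta\le\alpha$) glued at those times; by the strong Markov property this concatenation is again a standard Brownian motion, and its restriction to $[0,r]$ coincides with the fixed restriction dictated by the memory. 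Consequently, conditionally on $\mathcal B$ the process $\widehat\xi^\alpha=\int_0^{\cdot}K(\cdot,s)dW^\alpha_s+U_{\cdot}x$ has the same $\PP_{\xi[0,r]}$-law as $\xi$ in \eqref{rep:volterra}.

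Given this conditional-law identity, for non-negative $\phi$ I would apply Tonelli to interchange sum and expectation, then invoke the tower property, pulling the $\mathcal B$-measurable indicators outside the inner conditional expectation:
\begin{equation*}
m_\phi(t,\xi[0,r])=\sum_{\alpha\in\cii}\EE_{\xi[0,r]}\bigl[1_{\cii^l}(\alpha)1_{[b^\alpha,d_\alpha)}(t)\bigr]\cdot\EE_{\xi[0,r]}\phi(\xi)=\EE_{\xi[0,r]}X_t(1)\cdot\EE_{\xi[0,r]}\phi(\xi).
\end{equation*}
Remark \ref{rem:GW}(i) then yields $\EE_{\xi[0,r]}X_t(1)=e^{\beta(t-r)}$, completing the non-negative case. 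For the integrable case, one first applies the non-negative version to $|\phi|$, which gives $m_{|\phi|}(t,\xi[0,r])=e^{\beta(t-r)}\EE_{\xi[0,r]}|\phi(\xi)|<\infty$; this in turn justifies the Fubini interchange and the removal of absolute values for general $\phi$.

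The main technical obstacle is the clean verification of the conditional-law identity in the second paragraph: because $W^\alpha$ is built via a genealogy-dependent gluing, $\widehat\xi^\alpha$ is genuinely not independent of $\mathcal B$, so one cannot factor the expectation directly and must instead argue that the gluing-at-random-times preserves the standard Brownian law once one conditions on those gluing times. Everything else is bookkeeping plus the classical Galton-Watson first-moment formula for $X_t(1)$.
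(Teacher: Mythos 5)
Your proposal is correct and follows essentially the same route as the paper: factor the expectation of the spatial functional from the genealogical data (using that each concatenated driving process $W^\alpha$ is a standard Brownian motion, so $\widehat\xi^\alpha$ has the law of $\xi$), then apply the Galton--Watson identity $\EE_{\xi[0,r]}X_t(1)=e^{\beta(t-r)}$ from Remark \ref{rem:GW}(i). The only difference is presentational: you condition on the full genealogical $\sigma$-algebra $\sigma(L^\theta,S^\theta)$ and spell out that gluing at the (conditionally deterministic) death times preserves the Brownian law, whereas the paper conditions only on $\mathcal S$ and cites the earlier observation that each $W^\alpha$ is a standard Brownian motion; your version is, if anything, slightly more explicit on this point.
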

\begin{proof}
Similar to \eqref{def:Xt}, we can write
\begin{equation*}
	m_ \phi(t,\xi[0,r])=\EE_{\xi[0,r]}\sum_{\alpha\in\cii^l}1_{[b^\alpha,d_ \alpha)}(t)\phi(\xi^\alpha)\,.
\end{equation*}
Since $\cii^l$ depends only on $\mathcal{S} :=\{S^\alpha:\alpha\in\cii\}$, conditioning on $\mathcal S$ and using independency yield
\begin{align*}
	m_ \phi(t,\xi[0,r])&=\EE_{\xi[0,r]}\sum_{\alpha\in\cii^l}\EE_{\xi[0,r]}\lt(1_{[b^\alpha,d_ \alpha)}(t)\phi(\xi^\alpha)\Big| \mathcal S\rt)
	\\&= \EE_{\xi[0,r]}\sum_{\alpha\in\cii^l}1_{[b^\alpha,d_ \alpha)}(t)\EE_{\xi[0,r]}\phi(\xi^\alpha)\,.
\end{align*}
Thanks to Fubini and Tonelli's theorems, we have freely interchanged the order of expectation and summation.
For each $\alpha$ which is alive at time $t$, from \eqref{partloc}, we see that $\xi^\alpha$ follows the law of $\xi$ defined in \eqref{rep:volterra}, thus
\begin{equation*}
	\EE_{\xi[0,r]}\phi(\xi^\alpha)=\EE_{\xi[0,r]}\phi(\xi)\,.
\end{equation*}
Hence, in conjunction with the previous identity of $m_ \phi$, we obtain
\begin{equation}
	m_ \phi(t,\xi[0,r])=\EE_{\xi[0,r]}\phi(\xi)\EE_{\xi[0,r]}\sum_{\alpha\in\cii^l}1_{[b^\alpha,d_ \alpha)}(t)\,.
\end{equation}
The last expectation on the right-hand side above is $\EE_{\xi[0,r]}X_t(1)=e^{\beta(t-r)}$ by point (i) of Remark \ref{rem:GW}.
\end{proof}
We will also utilize an explicit formula for the second moments.
\begin{lemma}
Suppose that $\psi''(1^-)<\infty$. 
Then, for all functions $f_1,f_2$ in $L^2(\RR^d)$,
\begin{multline}\label{eqn:m2}
\EE_{\xi[0,r]}(X_t(f_1)X_t(f_2))
=\EE_{\xi[0,r]}\lt[f_1(\xi_{t})f_2(\xi_{t})\rt]e^{\beta (t-r)}
\\+V\psi''(1^-)e^{2 \beta (t-r)} \int_r^t\EE_{\xi[0,r]}\lt[\EE_{\xi[0,r],u}(f_1(\xi_{t}))\EE_{\xi[0,r],u}(f_2(\xi_{t})) \rt]e^{- \beta (u-r)}du\,.
\end{multline}
\end{lemma}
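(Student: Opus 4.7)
The plan is to split $X_t(f_1)X_t(f_2) = \sum_{\alpha,\theta\in\cii_t}f_1(\xi^\alpha_t)f_2(\xi^\theta_t)$ into the diagonal ($\alpha=\theta$) and off-diagonal ($\alpha\ne\theta$) contributions and apply Lemma \ref{lem:firstmom} to each. The diagonal sum is $X_t(f_1f_2)$, and Lemma \ref{lem:firstmom} immediately yields $e^{\beta(t-r)}\EE_{\xi[0,r]}[f_1(\xi_t)f_2(\xi_t)]$, the first term on the right-hand side of \eqref{eqn:m2}.

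For the off-diagonal sum, I would index each ordered pair $(\alpha,\theta)$ with $\alpha\ne\theta$ by its most recent common ancestor $\gamma=\alpha\wedge\theta$, whose death time $u=d_\gamma$ is the branching event that separates the two lineages. Let $Y^{(\gamma,i)}$ denote the branching sub-system descended from the $i$-th child of $\gamma$; each such sub-system has birth memory $\xi^\gamma[0,d_\gamma]$. Then
\[
\sum_{\alpha\ne\theta\in\cii_t} f_1(\xi^\alpha_t)f_2(\xi^\theta_t) = \sum_{\gamma\in\cii^l,\,d_\gamma\le t}\ \sum_{\substack{1\le i,j\le S^\gamma\\ i\ne j}} Y^{(\gamma,i)}_t(f_1)\, Y^{(\gamma,j)}_t(f_2).
\]
By construction, the sub-systems attached to distinct children of $\gamma$ are generated from disjoint, independent families of driving Brownian motions, lifetimes and offspring variables, hence are conditionally independent given $\cff_{d_\gamma}$ (which contains $\xi^\gamma[0,d_\gamma]$ and $S^\gamma$). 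Applying Lemma \ref{lem:firstmom} to each factor and using $\EE[S^\gamma(S^\gamma-1)]=\psi''(1^-)$ reduces the off-diagonal expectation to
\[
\psi''(1^-)\,\EE_{\xi[0,r]}\sum_{\gamma:\,d_\gamma\le t} e^{2\beta(t-d_\gamma)}\,\EE_{\xi[0,r],d_\gamma}[f_1(\xi_t)]\,\EE_{\xi[0,r],d_\gamma}[f_2(\xi_t)].
\]

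The next step is to convert the sum over death times $d_\gamma$ into a time integral over $u\in[r,t]$. Since $L^\alpha\sim\mathrm{Exp}(V)$ is independent of $\xi^\alpha$ and of everything generated before $b^\alpha$, conditioning on the birth time and path of $\alpha$ and integrating out $L^\alpha$ gives, for any non-negative measurable $F$,
\[
\EE_{\xi[0,r]}\bigl[1_{d_\alpha\le t}\,F(d_\alpha,\xi^\alpha[0,d_\alpha])\bigr] = V\int_r^t \EE_{\xi[0,r]}\bigl[1_{\alpha\in\cii_u}\,F(u,\xi^\alpha[0,u])\bigr]\,du.
\]
This identifies $V\,du$ as the compensator of each particle's death time; summing over $\alpha\in\cii^l$ turns the sum over dying particles into an expected sum over particles alive at time $u$. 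Taking $F(u,\xi^\alpha[0,u])=e^{2\beta(t-u)}\EE_{\xi[0,r],u}[f_1(\xi_t)]\,\EE_{\xi[0,r],u}[f_2(\xi_t)]$ evaluated along the memory of $\alpha$, and then applying Lemma \ref{lem:firstmom} a second time to the resulting $\EE_{\xi[0,r]}\sum_{\alpha\in\cii_u}$, produces a factor $e^{\beta(u-r)}\EE_{\xi[0,r]}[\EE_{\xi[0,r],u}[f_1(\xi_t)]\EE_{\xi[0,r],u}[f_2(\xi_t)]]$. Since $e^{2\beta(t-u)}e^{\beta(u-r)}=e^{2\beta(t-r)}e^{-\beta(u-r)}$, this matches the integrand of \eqref{eqn:m2} exactly.

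The main obstacle I anticipate is the sum-to-integral conversion: one must justify the interchange of the countable sum over the multi-index set $\cii$ with the time integration and the expectations, and verify absolute convergence of the double series. Remark \ref{rem:GW}(ii) together with the $L^2(\RR^d)$ hypothesis on $f_1,f_2$ provides the second-moment control needed for Fubini/Tonelli. A subsidiary nuisance is formalizing the conditional independence of sub-trees rooted at different children of the MRCA; this is built into the construction of Section \ref{sec:preliminaries} via the independent families $\{\widehat W^\alpha, L^\alpha, S^\alpha\}$, but should be spelled out as a statement about $\cff_{d_\gamma}$-measurability.
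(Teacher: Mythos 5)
Your argument is correct, but it takes a genuinely different route from the paper. The paper does not make the genealogical decomposition rigorous directly: it conditions on the first branching time $\zeta$ after $r$ to derive a renewal-type integral equation for $m_{f_1,f_2}(t,\xi[0,r])$ (its \eqref{eqn:m2eqnsimp}), checks by computation that the right-hand side of \eqref{eqn:m2} solves that equation, and then proves uniqueness of solutions by iterating the equation Gronwall-style, using the a priori bound $\sup_{v\in[r,t]}\EE_{\xi[0,r]}(X_v(1))^2<\infty$ from Remark \ref{rem:GW}(ii) — which is why it first restricts to bounded $f_1,f_2$ and then passes to $L^2$ by truncation and dominated convergence. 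You instead carry out the ``many-to-two'' computation: split off the diagonal, index off-diagonal pairs by the most recent common ancestor, use conditional independence of the sub-trees rooted at distinct children together with Lemma \ref{lem:firstmom}, extract $\EE[S(S-1)]=\psi''(1^-)$, and convert the sum over death times into $V\,du$ via the exponential-lifetime compensator identity, finishing with one more application of Lemma \ref{lem:firstmom} at time $u$; the exponent bookkeeping $e^{2\beta(t-u)}e^{\beta(u-r)}=e^{2\beta(t-r)}e^{-\beta(u-r)}$ is right. Your route is constructive (it derives the formula rather than verifying a guessed solution and proving uniqueness) and it is exactly the interpretation the paper's own remark sketches but does not execute; the price is the bookkeeping you already flag — conditioning at the random death time of the MRCA must be formalized in terms of the independent families $\{\widehat W^\alpha,L^\alpha,S^\alpha\}$, and the sum/integral/expectation interchanges must be justified. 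One small refinement: Remark \ref{rem:GW}(ii) only controls bounded test functions, so the cleanest closure is to run your computation for non-negative $f_1,f_2$ (Tonelli makes every interchange free, and the resulting identity shows $\EE_{\xi[0,r]}[X_t(|f_1|)X_t(|f_2|)]<\infty$ for $f_i\in L^2$ since $\xi_t$ has a bounded Gaussian density for $t>r$), and then pass to signed $f_1,f_2$ by bilinearity or, as in the paper, by truncation and dominated convergence.
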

\begin{remark}
(\ref{eqn:m2}) can be given a genealogical interpretation.
Writing
\begin{eqnarray*}
X_t(f_1)X_t(f_2)&=&\sum_{\alpha,\alpha'\in \mathcal I_t}f_1(\xi^\alpha_t)
f_2(\xi^{\alpha'}_t)
\\
&=&\sum_{\alpha\in \mathcal I_t}f_1(\xi^\alpha_t)
f_2(\xi^{\alpha}_t)+\sum_{\alpha\ne\alpha'\in \mathcal I_t}f_1(\xi^\alpha_t)
f_2(\xi^{\alpha'}_t)
\end{eqnarray*}
and then taking expectations,
we can think of the second term in the last expression of (\ref{eqn:m2})
as decomposing $\EE_{\xi[0,r]}\lt[\sum_{\alpha\ne\alpha'\in \mathcal I_t}f_1(\xi^\alpha_t)
f_2(\xi^{\alpha'}_t) \rt]$
over the time $u$ when the last common ancestor of $\alpha,\alpha'$ died.
This will become clearer in the following proof.
\end{remark}
\begin{proof}
We assume first that $f_1,f_2$ are bounded. In this case, it follows by Remark \ref{rem:GW} (ii) that
\begin{equation}\label{tmp:supm2}
	\sup_{v\in[r,t]}|m_{f_1,f_2}(v,\xi[0,r])|\lesssim\sup_{v\in[r,t]} \EE_{\xi[0,r]}[(X_v(1))^2] <\infty\quad\mbox{for all}\quad t\ge r\,.
\end{equation}
Let $\zeta$ and $\xi$ be the first branching time (after $r$) and the trajectory of the first individual. Using the independence between $\zeta$ and the spatial motions, we have
	\begin{equation*}
	\EE_{\xi[0,r]}[X_t(f_1)X_t(f_2)1_{t<\zeta}]=\EE_{\xi[0,r]}[f_1(\xi_t)f_2(\xi_t)]e^{-V(t-r)}\,.
	\end{equation*}
	Conditioning on $\zeta$ and $\xi$, we see that
	\begin{equation*}
	\EE_{\xi[0,r]}[X_t(f_1)X_t(f_2)1_{t>\zeta}]
	=\int_r^t\EE_{\xi[0,r]}\left[\sum_{k=0}^\infty p_k\sum_{i,j=1}^k \EE_{\xi[0,r],u}\left( X_{t}^{i,u}(f_1)X_{t}^{j,u}(f_2) \right) \right]Ve^{-V(u-r)}du,
	\end{equation*}
	where $k$ represents the number of offspring produced at time $\zeta$, and 
	for each $i=1,\dots, k$, $X^{i,u}=\{X^{i,u}_t,t\ge u\}$ is the branching system starting from the 
	$i$-th offspring. Conditional on $\cff_u$, each $X^{i,u}$ has memory $\xi[0,u]$ and is independent from each other. 
	By considering two cases when $i=j$ and $i\neq j$, the right hand side above is the same as
	\begin{multline*}
	\int_r^t \EE_{\xi[0,r]} \left[\psi'(1^-)m_{f_1,f_2}(t,\xi[0,u]) \right]Ve^{-V(u-r)}du
	\\+\int_r^t \EE_{\xi[0,r]} \left[\psi''(1^-)m_{f_1}(t,\xi[0,u])m_{f_2}(t,\xi[0,u])\right]Ve^{-V(u-r)} du\,.
	\end{multline*}
	In addition, from \eqref{eqn:m1}
	\begin{equation*}
		\EE_{\xi[0,r]} \left[\psi''(1^-)m_{f_1}(t,\xi[0,u])m_{f_2}(t,\xi[0,u])\right]=\psi''(1^-)e^{2 \beta(t-u)}\EE_{\xi[0,r]}g(t,u)
	\end{equation*}
	where $g(t,u)=\EE_{\xi[0,r],u}(f_1(\xi_t))\EE_{\xi[0,r],u}(f_2(\xi_t))$.
	Altogether, we see that $m_{f_1,f_2}$ satisfies the equation
	\begin{multline}\label{eqn:m2eqnsimp}
	m_{f_1,f_2}(t,\xi[0,r])-\psi'(1^-)V\int_r^t \EE_{\xi[0,r]} \left[m_{f_1,f_2}(t,\xi[0,v]) \right]e^{-V(v-r)}dv
	\\=\EE_{\xi[0,r]}[f_1(\xi_t)f_2(\xi_t)]e^{-V(t-r)}
	+\psi''(1^-)V\int_r^t \EE_{\xi[0,r]}[g(t,u)]e^{2 \beta(t-u)} e^{-V(u-r)}du\,.
	\end{multline}
	We now check that the right-hand side \eqref{eqn:m2} is a solution to \eqref{eqn:m2eqnsimp}. Indeed, under \eqref{eqn:m2}, the left-hand side of \eqref{eqn:m2eqnsimp} becomes
	\begin{multline*}
	 	\EE_{\xi[0,r]}[f_1(\xi_t)f_2(\xi_t)]e^{\beta(t-r)}+\psi''(1^-)Ve^{2\beta(t-r)} \int_r^t\EE_{\xi[0,r]}[g(t,u)]e^{-\beta (u-r)}du
	 	\\ -\psi'(1^-)V\int_r^t \EE_{\xi[0,r]}[f_1(\xi_t)f_2(\xi_t)]e^{\beta(t-v)} e^{-V(v-r)}dv
	 	\\- \psi'(1^-)\psi''(1^-)V^2\int_r^t\int_v^t\EE_{\xi[0,r]}[g(t,u)]e^{2 \beta(t-v)}e^{-V(v-r)}e^{-\beta(u-v)}dudv\,,
	 \end{multline*} 
	which coincides with the right-hand side of \eqref{eqn:m2eqnsimp} after some integral computations.
	On the other hand, equation \eqref{eqn:m2eqnsimp} has at most one solution satisfying \eqref{tmp:supm2}.
	In fact, for every fixed $t\ge0$, the difference $\Delta_r$ of two solutions of (\ref{eqn:m2eqnsimp}) would have to satisfy 
\[
|\Delta_r|\le \psi'(1^-)V\int_r^t\EE_{\xi[0,r]}\left(|\Delta_v| \right)e^{-V(v-r)}dv\ \forall r\le t.
\]
Iterating this inequality yields
\begin{equation*}
	|\Delta_r|\le \lt(\sup_{v\in [r,t]}\EE_{\xi[0,r]}(|\Delta_v|)\rt) \frac{(\psi'(1^-)V(t-r))^n}{n!}
\end{equation*}
for every $r\le t$ and $n\in\NN$. Sending $n$ to infinity implies $\Delta_r=0$ for all $r\le t$. 
Hence, equation \eqref{eqn:m2eqnsimp} has at most one solution, which is given by the right-hand side of \eqref{eqn:m2}.

To take off the boundedness restriction, we note by Jensen's inequality, for every $t\ge r$ 
\begin{equation*}
 	\EE_{\xi[0,r]}\lt[\EE_{\xi[0,r],u}|f_1(\xi_t)|\EE_{\xi[0,r],u}|f_2(\xi_t)|\rt]\le\EE_{\xi[0,r]}(|f_1|\vee |f_2|)^2(\xi_t)\lesssim  \||f_1|\vee |f_2|\|^2_{L^2(\RR^d)}\,.
\end{equation*} 
Hence, for general functions $f_1,f_2$ in $L^2(\RR^d)$, we can extend identity \eqref{eqn:m2} for bounded functions to $L^2(\RR^d)$-functions using truncation and Lebesgue's dominated convergence theorem.
\end{proof}

As applications, we derive the following two propositions which are essential in our approach. 
\begin{proposition}\label{prop:XFs}
	Suppose that $\psi'(1^-)<\infty$ and $r\le s\le t$. For every measurable function $f:\RR^d\to\RR$ which is either non-negative or satisfies $\EE_{\xi[0,r]}|f(\xi_t)|<\infty$, we have
	\begin{equation}
		\EE_{\xi[0,r]}\lt(X_t(f)\big|\cff_s \rt)=e^{\beta(t-s) }\sum_{\alpha\in\cii_s} \EE_{\xi[0,r]}\lt(f(\xi_t^\alpha)\big|\cgg_s^\alpha \rt)
	\end{equation}
\end{proposition}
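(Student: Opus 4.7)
The plan is to combine the branching decomposition \eqref{eqn:XtXs} at the intermediate time $s$ with the first-moment formula of Lemma~\ref{lem:firstmom} applied to each subsystem, and then to use the canonicity of the Volterra representation to rewrite the resulting expectation in terms of $\cgg_s^\alpha$. First, I would invoke \eqref{eqn:XtXs} to write $X_t(f)=\sum_{\alpha\in\cii_s}X_t^{\alpha,s}(f)$. Since $\cii_s$ is $\cff_s$-measurable and either $f\ge 0$ (Tonelli) or $\EE_{\xi[0,r]}|f(\xi_t)|<\infty$ (Fubini, using Lemma~\ref{lem:firstmom} together with Remark~\ref{rem:GW}(i) to dominate), one may interchange the sum with $\EE_{\xi[0,r]}(\,\cdot\mid\cff_s)$ and obtain
\[
\EE_{\xi[0,r]}\bigl(X_t(f)\,\big|\,\cff_s\bigr)=\sum_{\alpha\in\cii_s}\EE_{\xi[0,r]}\bigl(X_t^{\alpha,s}(f)\,\big|\,\cff_s\bigr).
\]

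For each fixed $\alpha\in\cii_s$, I would then invoke the branching property. By construction, the randomness driving $X^{\alpha,s}$ beyond time $s$ — namely the families $\{\widehat W^\theta, L^\theta, S^\theta\}$ associated to descendants of $\alpha$ born after time $s$, together with the post-$s$ increments of $\widehat W^\alpha$ itself — is independent of $\cff_s$, whereas the initial memory $\xi^\alpha[0,s]$ is $\cff_s$-measurable. Conditionally on $\cff_s$, the subsystem $X^{\alpha,s}$ is therefore a branching Volterra system starting from the frozen single memory $\xi^\alpha[0,s]$. Applying Lemma~\ref{lem:firstmom} conditionally to the functional $\phi(\xi)=f(\xi_t)$ yields
\[
\EE_{\xi[0,r]}\bigl(X_t^{\alpha,s}(f)\,\big|\,\cff_s\bigr)=e^{\beta(t-s)}\,h(\xi^\alpha[0,s]),\qquad h(\xi[0,s]):=\EE_{\xi[0,s]}f(\xi_t).
\]

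It then remains to identify $h(\xi^\alpha[0,s])$ with $\EE_{\xi[0,r]}(f(\xi_t^\alpha)\mid\cgg_s^\alpha)$, reading $\xi_t^\alpha$ in the statement as the hypothetical trajectory $\widehat\xi_t^\alpha$ of \eqref{partloc} (the two coincide on $\{d_\alpha>t\}$, and both sides vanish on $\{d_\alpha\le t\}$ after reindexing via the subsystem decomposition). Since \eqref{rep:volterra} is canonical, $\cgg_s^\alpha$ is generated by $\{W_u^\alpha:u\le s\}$, and by the inductive construction of $W^\alpha$ the future increments $W_u^\alpha-W_s^\alpha$ ($u>s$) form a standard Brownian motion independent of $\cgg_s^\alpha$. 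Consequently, the conditional law of $\widehat\xi_t^\alpha=\int_0^t K(t,u)\,dW_u^\alpha+U_t x$ given $\cgg_s^\alpha$ is exactly that of a Volterra-Gaussian process continuing from the memory $\widehat\xi^\alpha[0,s]=\xi^\alpha[0,s]$, giving the identification. Summing over $\alpha\in\cii_s$ finishes the proof.

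The main obstacle will be the middle step: one must carefully single out the driving randomness of $X^{\alpha,s}$, verify its joint independence from $\cff_s$ with the prescribed law, and confirm that Lemma~\ref{lem:firstmom} may legitimately be applied conditionally with the memory $\xi^\alpha[0,s]$ treated as a frozen initial input. Once that measurability and independence bookkeeping is in place, the remaining manipulations are routine consequences of the Volterra construction \eqref{partloc} and the canonicity assumption.
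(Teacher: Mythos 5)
Your proposal is correct and follows essentially the same route as the paper's proof: decompose via \eqref{eqn:XtXs}, exchange sum and conditional expectation, apply Lemma~\ref{lem:firstmom} conditionally to each subsystem $X^{\alpha,s}$ with frozen memory $\xi^\alpha[0,s]$, and identify $m_f(t,\xi^\alpha[0,s])=e^{\beta(t-s)}\EE_{\xi[0,r]}(f(\xi_t^\alpha)\mid\cgg_s^\alpha)$. Your extra care in spelling out the independence bookkeeping and the reading of $\xi_t^\alpha$ as the continued trajectory $\widehat\xi_t^\alpha$ is consistent with what the paper leaves implicit.
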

\begin{proof}
	From \eqref{eqn:XtXs} we have
	\begin{equation*}
		\EE_{\xi[0,r]}(X_t(f)|\cff_s)=\sum_{\alpha\in\cii_s}\EE_{\xi[0,r]}(X_t^{\alpha,s}(f)|\cff_s)\quad \PP_{\xi[0,r]}-\textrm{a.s.}
	\end{equation*}
	Conditional on $\cff_s$, for each $\alpha\in\cii_s$, $X^{\alpha,s}$ is a branching system starting from the memory $\xi^\alpha[0,s]$. Thus, Lemma \ref{lem:firstmom} is applied to get
	\begin{equation*}
		\EE_{\xi[0,r]}(X_t^{\alpha,s}(f)|\cff_s)=m_f(t,\xi^\alpha[0,s]) =e^{\beta(t-s)}\EE_{\xi[0,r]}(f(\xi^\alpha_t)|\cgg^\alpha_s)\,.
	\end{equation*}
	The result follows upon combining the previous identities.
\end{proof}
\begin{proposition}\label{prop:PXtFt0}
For every $t\ge s\ge r\ge 0$ and $f\in L^2(\RR^d)$, we have
\begin{align}\label{eqn:Ptt02}
	\EE_{\xi[0,r]}&\lt(\lt(X_t(f)-\EE_{\xi[0,r]}(X_t(f)|\cff_{s})\rt)^2 \rt)
	\nonumber\\&=e^{\beta (t-r)}\EE_{\xi[0,r]}\lt(f^2(\xi_{t})\rt)-e^{2 \beta t- \beta s- \beta r}\EE_{\xi[0,r]} \lt(\EE_{\xi[0,r],s}f(\xi_t) \rt)^2
	\\&\quad+e^{2 \beta t- \beta r}\int_s^{t}V \psi''(1^-) \EE_{\xi[0,r]}\lt(\EE_{\xi[0,r],u}f(\xi_t)\rt)^2e^{-\beta u}du.\nonumber
\end{align}
In particular, there exists a constant $C=C(V,\psi''(1^-), \beta)$ such that
\begin{equation}\label{est:X|Fs}
	\EE_{\xi[0,r]}\lt(\lt(X_t(f)-\EE_{\xi[0,r]}(X_t(f)|\cff_{s})\rt)^2 \rt)\le C  e^{2 \beta t- \beta s- \beta r}\sigma_2^{-d}(t,r)\|f\|_{L^2(\RR^d)}^2\,.
\end{equation}
\end{proposition}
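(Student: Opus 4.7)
The plan is to decompose $X_t$ at the intermediate time $s$ via \eqref{eqn:XtXs} and exploit the conditional independence of the sub-branches $\{X^{\alpha,s}:\alpha\in\cii_s\}$ given $\cff_s$. Writing
\[
X_t(f)-\EE_{\xi[0,r]}(X_t(f)|\cff_s) = \sum_{\alpha\in\cii_s}\lt[X^{\alpha,s}_t(f)-\EE_{\xi[0,r]}(X^{\alpha,s}_t(f)|\cff_s)\rt],
\]
the conditional independence together with the zero conditional mean of each summand annihilates the cross terms, so
\[
\EE_{\xi[0,r]}\lt[(X_t(f)-\EE_{\xi[0,r]}(X_t(f)|\cff_s))^2\rt] = \EE_{\xi[0,r]}\lt[\sum_{\alpha\in\cii_s}\Var(X^{\alpha,s}_t(f)|\cff_s)\rt].
\]

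For each $\alpha\in\cii_s$, the sub-branch $X^{\alpha,s}$ starts from the memory $\widehat\xi^\alpha[0,s]$, so \eqref{eqn:m2} with $r\mapsto s$ and $f_1=f_2=f$, together with Lemma~\ref{lem:firstmom} applied to the first moment, gives a three-term $\cgg^\alpha_s$-measurable expression for $\Var(X^{\alpha,s}_t(f)|\cff_s)$. The next step is to sum over $\alpha\in\cii_s$ and take $\EE_{\xi[0,r]}$. For this I plan to use the path-functional identity
\[
\EE_{\xi[0,r]}\sum_{\alpha\in\cii_s}\Psi(\widehat\xi^\alpha) = \EE_{\xi[0,r]}[X_s(1)]\,\EE_{\xi[0,r]}\Psi(\xi) = e^{\beta(s-r)}\EE_{\xi[0,r]}\Psi(\xi),
\]
which follows from Remark~\ref{rem:GW}(i) and the fact that conditionally on the genealogical $\sigma$-algebra $\sigma(\{L^\theta,S^\theta:\theta\le\alpha\})$ --- upon which $\mathbf 1_{\{\alpha\in\cii_s\}}$ is measurable --- the driving motion $W^\alpha$ is still a standard Brownian motion, so $\widehat\xi^\alpha$ remains distributed as $\xi$. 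Applying this identity to the three terms (using the tower property to absorb the inner $\cgg^\alpha_s$-conditionings) and combining exponential prefactors produces exactly \eqref{eqn:Ptt02}.

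For the bound \eqref{est:X|Fs}, the plan is to dominate $\EE_{\xi[0,r]}f^2(\xi_t)$ using the Gaussian transition density in \eqref{eqn_pj}: conditionally on $\cgg_r$, $\xi_t$ is $\CN(U_t x+\int_0^r K(t,u)dW_u,\sigma_2^2(t,r)I)$, so
\[
\EE_{\xi[0,r]}f^2(\xi_t)\le\|p_2(t,r,\cdot)\|_\infty\|f\|_{L^2(\RR^d)}^2\lesssim\sigma_2^{-d}(t,r)\|f\|_{L^2(\RR^d)}^2.
\]
Jensen's inequality $(\EE_{\xi[0,r],u}f(\xi_t))^2\le\EE_{\xi[0,r],u}f^2(\xi_t)$ extends the same bound to the integrand of the middle term of \eqref{eqn:Ptt02} uniformly in $u\in[s,t]$. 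Dropping the non-positive third term of \eqref{eqn:Ptt02}, using $\int_s^te^{-\beta u}du\le\beta^{-1}e^{-\beta s}$ in the middle term, and noting $e^{\beta(t-r)}\le e^{2\beta t-\beta s-\beta r}$ (since $s\le t$) in the first term, delivers \eqref{est:X|Fs} with $C=C(V,\psi''(1^-),\beta)$. The main technical point is the conditional-law identity used in the summation step: $\widehat\xi^\alpha$ is not strictly independent of the genealogy (it depends on ancestor death times through the construction of $W^\alpha$), but conditionally on the full genealogy up to $\alpha$ its law is still that of $\xi$, which is all that is needed; the remainder is bookkeeping of exponentials.
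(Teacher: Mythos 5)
Your proposal is correct and follows essentially the same route as the paper: decompose at time $s$ via \eqref{eqn:XtXs}, kill the cross terms by conditional independence given $\cff_s$, express each conditional variance through \eqref{eqn:m1} and \eqref{eqn:m2} applied to the sub-branch with memory $\xi^\alpha[0,s]$, sum using the path-functional first-moment formula of Lemma \ref{lem:firstmom}, and then obtain \eqref{est:X|Fs} by Jensen's inequality and the Gaussian density bound $\EE_{\xi[0,r]}f^2(\xi_t)\lesssim\sigma_2^{-d}(t,r)\|f\|_{L^2(\RR^d)}^2$. The only blemish is a harmless labeling slip at the end (the non-positive term you drop is the second term of \eqref{eqn:Ptt02}, and the $u$-integral you bound is the third), which does not affect the argument.
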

\begin{proof}
From \eqref{eqn:XtXs} we have 
\begin{equation*}
	X_t(f)-\EE_{\xi[0,r]}(X_t(f)|\cff_{s})=\sum_{\alpha\in \cii_{s} }\lt[X_t^{\alpha,s}(f)-\EE_{\xi[0,r]}(X_t^{\alpha,s}(f)|\cff_{s})\rt]\,.
\end{equation*}
Hence, 
\begin{align*}
	&\EE_{\xi[0,r]}\lt(\lt(X_t(f)-\EE_{\xi[0,r]}(X_t(f)|\cff_{s})\rt)^2 \big|\cff_{s}\rt)
	\\&=\sum_{\alpha\in \cii_{s} }\EE_{\xi[0,r]}\lt(\lt[X_t^{\alpha,s}(f)-\EE_{\xi[0,r]}(X_t^{\alpha,s}(f)|\cff_{s})\rt]^2\big|\cff_{s}\rt)
	\\&+\sum_{\alpha,\alpha'\in\cii_{s};\alpha\neq {\alpha'}}\EE_{\xi[0,r]}\lt([(X_t^{\alpha,s}(f)-\EE_{\xi[0,r]}(X_t^{\alpha,s}(f)|\cff_{s})][(X_t^{\alpha',s}(f)-\EE_{\xi[0,r]}(X_t^{\alpha',s}(f)|\cff_{s})]\big|\cff_{s}\rt).
\end{align*}
We note that under $\PP_{\xi[0,r]}$, conditional on $\cff_{s}$, the branching systems $X^{\alpha,s}$, $\alpha\in\cii_{s}$ are independent from each other. Thus the second sum above vanishes. We arrive at
\begin{multline}\label{tmp:e1}
	\EE_{\xi[0,r]}\lt(\lt(X_t(f)-\EE_{\xi[0,r]}(X_t(f)|\cff_{s})\rt)^2 \rt)
	\\=\EE_{\xi[0,r]}\lt(\sum_{\alpha\in \cii_{s} }\EE_{\xi[0,r]}\lt(\lt[X_t^{\alpha,s}(f)-\EE_{\xi[0,r]}(X_t^{\alpha,s}(f)|\cff_{s})\rt]^2\Big|\cff_{s}\rt)\rt)\,.
\end{multline}
Now, for each $\alpha\in\cii_s$, 
\begin{align*}
	\EE_{\xi[0,r]}&\lt(\lt[X_{t}^{\alpha,s}(f)-\EE_{\xi[0,r]}(X_{t}^{\alpha,s}(f)|\cff_{s})\rt]^2\Big|\cff_{s}\rt)
	\\&=\EE_{\xi[0,r]}\lt[(X_t^{\alpha,s}(f))^2|\cff_s\rt]-\lt[\EE_{\xi[0,r]}(X_t^{\alpha,s}(f)|\cff_s)\rt]^2
	\\&=m_{f,f}(t,\xi^\alpha[0,s]) -\lt[m_f(t,\xi^\alpha[0,s]) \rt]^2=: \phi(\xi^\alpha[0,s])\,.
\end{align*}
Hence, from \eqref{tmp:e1}, applying \eqref{eqn:m1}, we have
\begin{equation*}
	\EE_{\xi[0,r]}\lt(\lt(X_t(f)-\EE_{\xi[0,r]}(X_t(f)|\cff_{s})\rt)^2 \rt)
	=\EE_{\xi[0,r]}\sum_{\alpha\in\cii_s}\phi(\xi^\alpha[0,s])
	=e^{\beta(s-r)}\EE_{\xi[0,r]}\phi(\xi[0,s])\,.
\end{equation*}
Now \eqref{eqn:m1} and \eqref{eqn:m2} give
\begin{align*}
	\EE_{\xi[0,r]} \phi(\xi[0,s])
	&=e^{\beta(t-s)}\EE_{\xi[0,r],s}f^2(\xi_{t})-e^{2 \beta(t-s)}\EE_{\xi[0,r]}\lt(\EE_{\xi[0,r],s}f(\xi_t) \rt)^2
	\\&\quad+e^{2 \beta(t-s)}\int_s^{t}V \psi''(1^-) \EE_{\xi[0,r]}\lt(\EE_{\xi[0,r],u}f(\xi_t)\rt)^2e^{-\beta(u-s)}du\,.
\end{align*}
Upon combining  these equalities together, we arrive at \eqref{eqn:Ptt02}. 

Applying Jensen's inequality and recalling the definition of $p_2$ from (\ref{eqn_pj}), we see that for every $t\ge u\ge r$
\begin{align*}
	\EE_{\xi[0,r]}\lt(\EE_{\xi[0,r],u}f(\xi_t) \rt)^2&\le \EE_{\xi[0,r]}f^2(\xi_t)
	\\&=\int_{\RR^d}f^2\lt(U_t x+\int_0^rK(t,u)dW_u+y\rt)p_2(t,r,y)dy
	\\&\le (2 \pi \sigma^2_2(t,r))^{-\frac d2}\int_{\RR^d}f^2(y)dy\,.
\end{align*}
From here, \eqref{est:X|Fs} follows easily.
\end{proof}

\section{The weak law of large numbers} 
\label{sec:WLLN}

We study convergence in probability of \eqref{eqn:slln} for a fixed test function $g$. 
This is weaker than the almost sure convergence asserted in Theorem \ref{thm:SLLN}. We include this result here because  the assumptions of Theorem \ref{thm:SLLN} are relaxed and the proof of convergence in probability is considerably simpler. In particular, condition \ref{con:sl.tnmax} is not needed and  condition \ref{con:s1s} can be replaced by a milder condition.
	\begin{enumerate}[(C3')]
	 	\item\label{con:ws1s} There exists a function $b: (0,\infty)\rightarrow (0,\infty)$ such that $\lim_{t\rightarrow\infty}e^{-\beta b(t)}\sigma^d(t) =0$, $b(t)<t$ for $t$ sufficiently large, and 
		\[\lim_{t\to\infty}\frac{\sigma_1(t,b(t))}{\sigma(t) }=0.\]
	\end{enumerate}
	\begin{remark}\label{rem:ss} (i) Condition \ref{con:s1s} implies \ref{con:ws1s}. 

	\noindent (ii) Condition \ref{con:ws1s} implies
	\begin{equation}
		\lim_{t\to\infty}\frac{\sigma_2(t,b(t))}{\sigma(t)}=1
	\end{equation}
	and
	\begin{equation}
		\lim_{t\to\infty}\frac{\sigma_1(t,r)}{\sigma(t)}=0
	\end{equation}
	for every $r>0$. These are evident since $\sigma_1^2(t,b(t))+\sigma_2^2(t,b(t))=\sigma^2(t)$ and $\sigma_1(t,r)\le \sigma_1(t,b(t))$ for all $t$ sufficiently large.
	\end{remark}
	The following theorem is the main result of the current section.
\begin{theorem}\label{thm:wlln}
Let $X$ be a branching system starting from a typical memory $\xi[0,r]$ with $r\ge0$. We assume that conditions \ref{c0}-\ref{con:U} and \ref{con:ws1s} are satisfied.
Then for every measurable function $f$ in $L^1(\RR^d)\cap L^2(\RR^d)$, 
\begin{equation}\label{eqn:4.3}
	\lim_{t\to\infty}e^{-\beta t}\sigma^d(t)X_t(f)
	=e^{-\beta r} (2 \pi)^{-\frac d2}F \int_{\RR^d}\exp\lt\{-\frac12\lt|U_\infty \xi_0-\frac{y}\ell \rt|^2\rt\}f(y)dy
\end{equation}
in $\PP_{\xi[0,r]}$-probability, where $F$ is the random variable defined in \eqref{def:F}.
	\end{theorem}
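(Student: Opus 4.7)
I would split the statistic at an intermediate time $b(t)$ from condition \ref{con:ws1s}:
\begin{equation*}
e^{-\beta t}\sigma^d(t)X_t(f) = e^{-\beta t}\sigma^d(t)\bigl[X_t(f)-\EE_{\xi[0,r]}(X_t(f)\mid\cff_{b(t)})\bigr] + e^{-\beta t}\sigma^d(t)\EE_{\xi[0,r]}(X_t(f)\mid\cff_{b(t)}),
\end{equation*}
treating the fluctuation term in $L^2$ via Proposition \ref{prop:PXtFt0} and the predictable term through a single-particle analysis, then combining with the almost sure convergence $e^{-\beta b(t)}X_{b(t)}(1)\to e^{-\beta r}F$ from \eqref{def:F}. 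Proposition \ref{prop:PXtFt0} at $s=b(t)$, combined with Remark \ref{rem:ss}(ii) (so $\sigma_2(t,r)/\sigma(t)\to 1$), gives a second-moment bound on the fluctuation term of order $e^{-\beta r}e^{-\beta b(t)}\sigma^d(t)\|f\|_{L^2(\RR^d)}^2$, which vanishes by \ref{con:ws1s}; hence this term tends to $0$ in $L^2(\PP_{\xi[0,r]})$.

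\textbf{Reduction to a single-particle limit.} For the predictable term, Proposition \ref{prop:XFs} rewrites it as $e^{-\beta b(t)}\sum_{\alpha\in\cii_{b(t)}}\sigma^d(t)\EE_{\xi[0,r]}(f(\xi_t^\alpha)\mid\cgg_{b(t)}^\alpha)$. Writing
\begin{equation*}
J:=(2\pi)^{-d/2}\int_{\RR^d}\exp\Bigl\{-\tfrac12\bigl|U_\infty\xi_0-y/\ell\bigr|^2\Bigr\}f(y)\,dy
\end{equation*}
for the anticipated single-particle limit, I would add and subtract $J$ in each summand. The principal piece $e^{-\beta b(t)}X_{b(t)}(1)\cdot J$ converges almost surely to $e^{-\beta r}F\cdot J$, while the remainder has $L^1$-norm at most $e^{-\beta b(t)}\EE_{\xi[0,r]}\sum_{\alpha\in\cii_{b(t)}}\bigl|\sigma^d(t)\EE_{\xi[0,r]}(f(\xi_t^\alpha)\mid\cgg_{b(t)}^\alpha)-J\bigr|$. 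Applying Lemma \ref{lem:firstmom} to the non-negative path functional $\phi_t(\xi):=|\sigma^d(t)\EE_{\xi[0,r]}(f(\xi_t)\mid\cgg_{b(t)})-J|$ reduces matters to proving
\begin{equation*}
\lim_{t\to\infty}\EE_{\xi[0,r]}\bigl|\sigma^d(t)\EE_{\xi[0,r]}(f(\xi_t)\mid\cgg_{b(t)})-J\bigr| = 0.
\end{equation*}

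\textbf{Single-particle limit and main obstacle.} Setting $M_t:=\int_0^{b(t)}K(t,u)\,dW_u$, the conditional density of $\xi_t$ given $\cgg_{b(t)}$ is $p_2(t,b(t),\,\cdot-U_t\xi_0-M_t)$, so a direct rescaling in \eqref{eqn_pj} gives
\begin{equation*}
\sigma^d(t)\EE_{\xi[0,r]}(f(\xi_t)\mid\cgg_{b(t)}) = (2\pi)^{-d/2}\Bigl(\frac{\sigma(t)}{\sigma_2(t,b(t))}\Bigr)^{d}\int_{\RR^d} f(y)\exp\Bigl\{-\frac12\Bigl|\frac{y-U_t\xi_0-M_t}{\sigma_2(t,b(t))}\Bigr|^2\Bigr\}dy.
\end{equation*}
The prefactor tends to $1$ by Remark \ref{rem:ss}(ii); $U_t\xi_0/\sigma_2(t,b(t))\to U_\infty\xi_0$ by \ref{con:U}; and $M_t/\sigma_2(t,b(t))\to 0$ in probability, since its $[0,r]$-part is $\cff_r$-measurable with normalization by $\sigma(t)$ tending to $0$ by the typical-memory assumption \eqref{TypMem}, while its $[r,b(t)]$-part is centered Gaussian with variance $\sigma_1^2(t,b(t))/\sigma_2^2(t,b(t))\to 0$ under \ref{con:ws1s}. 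The main obstacle is then the clean interchange of limit, $\omega$-expectation and $y$-integral: for each $y$ the exponential factor converges in probability to $\exp\{-\frac12|y/\ell-U_\infty\xi_0|^2\}$ and is dominated by $1$, so the integrand is bounded by $|f(y)|\in L^1(\RR^d)$; Fubini together with dominated convergence (pointwise-in-$y$ convergence in probability, promoted to $L^1(\PP_{\xi[0,r]})$ by the uniform $L^\infty$ bound in $\omega$ for each fixed $y$) yields the desired $L^1$ convergence. The case $\ell=\infty$ requires no modification once one sets $y/\ell=0$; this is precisely where \ref{con:ws1s} suffices in place of the stronger \ref{con:s1s}, since only convergence in probability, not a Borel--Cantelli rate, is required.
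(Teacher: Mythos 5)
Your proposal is correct and follows essentially the same route as the paper: the same split at $\cff_{b(t)}$, with the fluctuation term killed in $L^2$ via Proposition \ref{prop:PXtFt0} together with Remark \ref{rem:ss}, the conditional term reduced through Proposition \ref{prop:XFs} and Lemma \ref{lem:firstmom} to the single-particle $L^1$ limit of Lemma \ref{lem:ergodic}, and the conclusion obtained from the almost sure martingale limit \eqref{def:F} along $b(t)\to\infty$. The only difference is cosmetic: in the single-particle step you argue via pointwise-in-$y$ convergence in probability plus boundedness and Fubini, whereas the paper substitutes the explicit Gaussian density for $\int_r^{b(t)}K(t,u)\,dW_u$ and applies dominated convergence twice; both are valid.
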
 
	The proof of this theorem is postponed to the end of the current section. Since our limit result is not at the level of measure-valued process, it is an abuse of terminology to call the above theorem weak law of large numbers.
	For convenience, for each function $f$, let $\mathcal T f$ be the function defined by
\begin{equation}\label{def:Tf}
\mathcal Tf(z)=(2 \pi)^{-\frac d2}\int_{\RR^d}\exp\lt\{-\frac12\lt|z-\frac{y}\ell \rt|^2\rt\}f(y)dy\,.
\end{equation}
The proof of Theorem \ref{thm:wlln} undergoes two main steps of showing
\begin{equation}\label{lim:step1}
	\lim_{t\to\infty}e^{-\beta t}\sigma^d(t)X_t(f)=\lim_{t\to\infty}e^{-\beta t}\sigma^d(t)\EE_{\xi[0,r]}( X_t(f)|\cff_{b(t)})
\end{equation}
and
\begin{equation}\label{lim:step2}
		\lim_{t\to\infty}e^{-\beta t}\sigma^d(t)\EE_{\xi[0,r]}( X_t(f)|\cff_{b(t)})=\lim_{t\to\infty}e^{-\beta b(t)}X_{b(t)}(1) \mathcal Tf(U_\infty \xi_0)\,,
\end{equation}
where the convergences are in $\PP_{\xi[0,r]}$-probability.
These are accomplished through the following lemmas. The first one is an extension of \eqref{lim:Efxi}.
\begin{lemma}\label{lem:ergodic}  Under the assumptions \ref{c1}, \ref{con:U} and \ref{con:ws1s},
	\begin{equation}\label{eq:cme}
		\lim_{t\to\infty}\EE_{\xi[0,r]}\lt|\sigma^d(t)\EE_{\xi[0,r]}\lt(f(\xi_t)|\cgg_{b(t)}\rt)- \mathcal T f(U_\infty \xi_0) \rt|=0
	\end{equation}
for any $f$ in $L^1(\RR^d)$.
\end{lemma}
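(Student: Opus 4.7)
\medskip
\noindent\textbf{Plan of proof.} My approach proceeds by explicit computation of the conditional expectation. Since \eqref{rep:volterra} is canonical, $\cgg_{b(t)} = \sigma(W_s,\,0 \le s \le b(t))$, so writing
\[
\xi_t = M_t + N_t,\quad M_t := U_t\xi_0 + \int_0^{b(t)} K(t,s)\,dW_s,\quad N_t := \int_{b(t)}^t K(t,s)\,dW_s,
\]
one has $M_t$ that is $\cgg_{b(t)}$-measurable and $N_t$ that is independent of $\cgg_{b(t)}$ with distribution $\CN(0,\sigma_2^2(t,b(t)) I_d)$. With the notation of \eqref{eqn_pj}, this gives
\[
\sigma^d(t)\EE_{\xi[0,r]}(f(\xi_t)\mid\cgg_{b(t)}) = \int_{\RR^d} f(y)\,G_t(y)\,dy,\quad \mathcal Tf(U_\infty \xi_0) = \int_{\RR^d} f(y)\,G(y)\,dy,
\]
where $G_t(y) := \sigma^d(t) p_2(t,b(t),y-M_t)$ and $G(y) := (2\pi)^{-d/2}\exp\lt(-\frac12|U_\infty \xi_0 - y/\ell|^2\rt)$.

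The core of the argument is the pointwise-in-$y$ convergence $G_t(y) \to G(y)$ in $\PP_{\xi[0,r]}$-probability. The scalar prefactor equals $(\sigma(t)/\sigma_2(t,b(t)))^d (2\pi)^{-d/2}$, which tends to $(2\pi)^{-d/2}$ by Remark \ref{rem:ss}(ii); and $y/\sigma_2(t,b(t)) \to y/\ell$ (with the convention $1/\infty := 0$). The crucial step is the convergence $M_t/\sigma_2(t,b(t)) \to U_\infty \xi_0$ in probability, which I would verify via the three-term split
\[
\frac{M_t}{\sigma_2(t,b(t))} = \frac{U_t\xi_0}{\sigma_2(t,b(t))} + \frac{\int_0^r K(t,s)\,dW_s}{\sigma_2(t,b(t))} + \frac{\int_r^{b(t)} K(t,s)\,dW_s}{\sigma_2(t,b(t))}.
\]
The first term converges deterministically to $U_\infty \xi_0$ by \ref{con:U} and Remark \ref{rem:ss}(ii); the second tends $\PP_{\xi[0,r]}$-a.s.~to $0$ by the typical-memory condition \eqref{TypMem} (using $\sigma_2(t,b(t))/\sigma(t) \to 1$); and the third has $L^2(\PP_{\xi[0,r]})$-norm at most $\sigma_1(t,b(t))/\sigma_2(t,b(t))$, which vanishes by \ref{con:ws1s} combined with Remark \ref{rem:ss}(ii).

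To conclude, I would note that $0 \le G_t(y) \le (\sigma(t)/\sigma_2(t,b(t)))^d (2\pi)^{-d/2}$ is bounded by some constant $C$ independent of $y$ and $\omega$ for all sufficiently large $t$. Bounded convergence then upgrades the pointwise-in-probability convergence to $\EE_{\xi[0,r]}|G_t(y) - G(y)| \to 0$ for every $y \in \RR^d$, while $|f(y)|\EE_{\xi[0,r]}|G_t(y) - G(y)| \le 2C|f(y)| \in L^1(\RR^d)$ furnishes a dominating function in $y$. Fubini's theorem followed by Lebesgue's dominated convergence then yields
\[
\EE_{\xi[0,r]}\lt|\sigma^d(t)\EE_{\xi[0,r]}(f(\xi_t)\mid\cgg_{b(t)}) - \mathcal Tf(U_\infty \xi_0)\rt| \le \int_{\RR^d}|f(y)|\,\EE_{\xi[0,r]}|G_t(y) - G(y)|\,dy \to 0,
\]
which is \eqref{eq:cme}. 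The main obstacle is the three-way decomposition of $M_t$ above: this is precisely where \ref{con:U}, \ref{con:ws1s}, Remark \ref{rem:ss}, and the typical-memory assumption each enter the argument simultaneously.
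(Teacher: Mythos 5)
Your proposal is correct and follows essentially the same route as the paper: both compute $\EE_{\xi[0,r]}(f(\xi_t)\mid\cgg_{b(t)})$ via the explicit Gaussian density centered at $U_t\xi_0+\int_0^{b(t)}K(t,s)\,dW_s$ with variance $\sigma_2^2(t,b(t))$, and pass to the limit using \ref{c1}, \ref{con:U}, \ref{con:ws1s} with Remark \ref{rem:ss}, the typical-memory condition, and dominated convergence. The only difference is organizational: the paper represents the increment $\int_r^{b(t)}K(t,s)\,dW_s$ as an independent standard normal variable and applies iterated dominated convergence in $y$ and $z$, whereas you keep it random, prove convergence in probability of the recentered mean, and use bounded convergence in $\omega$ before dominating in $y$ --- an equivalent bookkeeping of the same estimates.
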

\begin{proof}
	Using explicit densities of normal random variables, we have
	\begin{align*}
	&\EE_{\xi[0,r]}\lt|\sigma^d(t)\EE_{\xi[0,r]}\lt(f(\xi_t)|\cgg_{b(t)} \rt)- \mathcal T f(U_\infty \xi_0) \rt|
\\&\lesssim \EE_{\xi[0,r]}\int_{\RR^d}|f(y)|\lt|\frac{\sigma^d(t)}{\sigma_2^d(t,b(t))} e^{-\frac{|y-U_t\xi_0-\int_0^{b(t)}K(t,u)dW_u|^2}{2 \sigma_2^2{(t,b(t))}}} -e^{-\frac12\lt|U_\infty \xi_0-\frac y\ell\rt|^2} \rt|dy
\\&\lesssim\int_{\RR^d}\int_{\RR^d}|f(y)|\lt|\frac{\sigma^d(t)}{\sigma_2^d(t,b(t))} e^{-\frac{|y-U_t\xi_0-\int_0^{r}K(t,u)dW_u-z \int_r^{b(t)}|K(t,u)|^2du |^2}{2 \sigma_2^2{(t,b(t))}}} -e^{-\frac12\lt|U_\infty \xi_0-\frac y\ell\rt|^2} \rt| dy e^{-\frac{|z|^2}2 } dz\,.
		\end{align*}
We note that by \ref{con:ws1s} and Remark \ref{rem:ss},
\begin{equation*}
	\lim_{t\to\infty}\frac{\sigma^d(t)}{\sigma_2^d(t,b(t))}= 1\quad\mbox{and}\quad 
\lim_{t\to\infty}\frac{\int_r^{b(t)}|K(t,u)|^2du}{\sigma_2^2(t,b(t))}=0\,.
\end{equation*}
Together with (\ref{TypMem}), assumptions \ref{c1}, \ref{con:U} and the dominated convergence 
theorem, the inner integral above converges to $0$ for each fixed $z$ as $t\rightarrow\infty$, 
and is bounded by a constant multiple of $\|f\|_{L_1}$ which is integrable with respect to the measure $e^{-\frac{|z|^2}{2}}dz$.  Hence, by the dominated convergence theorem the double integral converges to $0$ as well. This shows \eqref{eq:cme}.
\end{proof}

\begin{lemma} \label{lem:step1}
	Let $f$ be a measurable function in $L^2(\RR^d)$.  
	If \ref{c0}, \ref{c1} and \ref{con:ws1s} hold,
	\begin{equation}\label{eqn:l2lim}
		\lim_{t\to\infty} e^{-2 \beta t}\sigma^{2d}(t)\EE_{\xi[0,r]} \lt[\lt(X_t(f)-\EE_{\xi[0,r]} (X_t(f)|\mathcal{F}_{b(t))} \rt)^2\rt]=0\,.
	\end{equation}
\end{lemma}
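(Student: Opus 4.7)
The plan is straightforward: specialize the second-moment bound of Proposition \ref{prop:PXtFt0} to $s = b(t)$, and then verify that the resulting upper bound vanishes after multiplication by $e^{-2\beta t}\sigma^{2d}(t)$, using conditions \ref{c1} and \ref{con:ws1s}. No serious obstacle is expected; Proposition \ref{prop:PXtFt0} does essentially all the work and this lemma is almost a direct corollary.

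Concretely, first I would apply inequality \eqref{est:X|Fs} with $s = b(t)$, giving
\begin{equation*}
\EE_{\xi[0,r]}\lt[\lt(X_t(f) - \EE_{\xi[0,r]}(X_t(f)|\cff_{b(t)})\rt)^2\rt]
\le C\, e^{2\beta t - \beta b(t) - \beta r}\,\sigma_2^{-d}(t,r)\,\|f\|_{L^2(\RR^d)}^2.
\end{equation*}
Multiplying both sides by $e^{-2\beta t}\sigma^{2d}(t)$ produces an upper bound of the form
\begin{equation*}
C\, e^{-\beta r}\|f\|_{L^2(\RR^d)}^2 \cdot e^{-\beta b(t)}\sigma^d(t)\cdot \lt(\frac{\sigma(t)}{\sigma_2(t,r)}\rt)^{d},
\end{equation*}
and it suffices to show that this tends to zero as $t\to\infty$.

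Second, I would invoke Remark \ref{rem:ss}(ii): under \ref{con:ws1s}, the ratio $\sigma_1(t,r)/\sigma(t)$ tends to $0$ for every fixed $r \ge 0$, so the identity $\sigma_1^2(t,r)+\sigma_2^2(t,r) = \sigma^2(t)$ forces $\sigma_2(t,r)/\sigma(t)\to 1$. Consequently the factor $(\sigma(t)/\sigma_2(t,r))^d$ is bounded (in fact converges to $1$). Finally, condition \ref{con:ws1s} directly asserts $\lim_{t\to\infty}e^{-\beta b(t)}\sigma^d(t) = 0$. Combining these three observations yields \eqref{eqn:l2lim}.
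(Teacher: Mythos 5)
Your proposal is correct and follows essentially the same route as the paper: apply estimate \eqref{est:X|Fs} with $s=b(t)$, multiply by $e^{-2\beta t}\sigma^{2d}(t)$, and conclude via Remark \ref{rem:ss} (boundedness of $\sigma^d(t)/\sigma_2^d(t,r)$) together with the decay $e^{-\beta b(t)}\sigma^d(t)\to 0$ from \ref{con:ws1s}. Nothing further is needed.
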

\begin{proof}
	The estimate \eqref{est:X|Fs} in Proposition \ref{prop:PXtFt0} yields
	\begin{equation*}
		e^{-2\beta t}\sigma^{2d}(t) \EE_{\xi[0,r]} \lt[\lt(X_t(f)-\EE_{\xi[0,r]} (X_t(f)|\mathcal{F}_{b(t)}) \rt)^2\rt]
		\lesssim \frac{\sigma^{d}(t)}{\sigma^{d}_2(t,r)} e^{-\beta b(t)}\sigma^d(t) \|f\|^2_{L^2(\RR^d)} \,,
	\end{equation*}
	which converges to $0$ by \ref{c1} and \ref{con:ws1s}.
	\end{proof}

	\begin{lemma}\label{lem:step2}
	Let $f$ be a measurable function in $L^1(\RR^d)$. If \ref{c0}-\ref{con:U} and \ref{con:ws1s} hold,
	\begin{equation}\label{eqn:l3lim}
			\lim_{t\to\infty} \EE_{\xi[0,r]} \lt|e^{- \beta t}\sigma^d(t)\EE_{\xi[0,r]}(X_t(f)|\mathcal{F}_{b(t)})-e^{- \beta b(t)}X_{b(t)}(1)\mathcal Tf(U_\infty \xi_0) \rt|=0\,.
	\end{equation}
	\end{lemma}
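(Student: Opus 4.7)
The plan is to reduce the estimate to Lemma \ref{lem:ergodic} by using the first-moment representation of Proposition \ref{prop:XFs} to translate the sum over the (random) population at time $b(t)$ into an expectation over a single Volterra-Gaussian trajectory.

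First I would invoke Proposition \ref{prop:XFs} with $s=b(t)$ to write
\begin{equation*}
\EE_{\xi[0,r]}(X_t(f)|\cff_{b(t)})=e^{\beta(t-b(t))}\sum_{\alpha\in\cii_{b(t)}}\EE_{\xi[0,r]}(f(\xi_t^\alpha)|\cgg_{b(t)}^\alpha).
\end{equation*}
Multiplying by $e^{-\beta t}\sigma^d(t)$ and pulling $e^{-\beta b(t)}$ out of the sum, and using that $X_{b(t)}(1)\mathcal Tf(U_\infty\xi_0)=\sum_{\alpha\in\cii_{b(t)}}\mathcal Tf(U_\infty\xi_0)$, the quantity inside the absolute value becomes
\begin{equation*}
e^{-\beta b(t)}\sum_{\alpha\in\cii_{b(t)}}\Bigl[\sigma^d(t)\EE_{\xi[0,r]}(f(\xi_t^\alpha)|\cgg_{b(t)}^\alpha)-\mathcal Tf(U_\infty\xi_0)\Bigr].
\end{equation*}
Applying the triangle inequality inside the outer expectation yields
\begin{equation*}
\EE_{\xi[0,r]}|\cdots|\le e^{-\beta b(t)}\EE_{\xi[0,r]}\sum_{\alpha\in\cii_{b(t)}}\Phi_t(\xi^\alpha),
\end{equation*}
where $\Phi_t(\eta):=\bigl|\sigma^d(t)\EE_{\xi[0,r]}(f(\eta_t)|\sigma(\eta_u,u\le b(t)))-\mathcal Tf(U_\infty\xi_0)\bigr|$ depends only on the path $\eta$ up to time $t$ of an individual particle's trajectory.

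Next I would apply the first-moment formula (Lemma \ref{lem:firstmom}) to the functional $\Phi_t$, which is nonnegative, giving
\begin{equation*}
\EE_{\xi[0,r]}\sum_{\alpha\in\cii_{b(t)}}\Phi_t(\xi^\alpha)=e^{\beta(b(t)-r)}\EE_{\xi[0,r]}\Phi_t(\xi).
\end{equation*}
The exponential factors $e^{-\beta b(t)}$ and $e^{\beta(b(t)-r)}$ combine into $e^{-\beta r}$, which is a constant independent of $t$. Hence
\begin{equation*}
\EE_{\xi[0,r]}|\cdots|\le e^{-\beta r}\EE_{\xi[0,r]}\bigl|\sigma^d(t)\EE_{\xi[0,r]}(f(\xi_t)|\cgg_{b(t)})-\mathcal Tf(U_\infty\xi_0)\bigr|.
\end{equation*}

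Finally I would invoke Lemma \ref{lem:ergodic}, whose conclusion is precisely that the right-hand side tends to $0$ as $t\to\infty$ under the hypotheses \ref{c1}, \ref{con:U} and \ref{con:ws1s}; this completes the proof. The main thing to check carefully is step two, namely that the functional $\Phi_t$ is indeed a measurable functional of the single trajectory (so that Lemma \ref{lem:firstmom} applies unambiguously to each $\xi^\alpha$ in the sum), which follows because conditional on $\cgg_{b(t)}^\alpha$ the future increment $\int_{b(t)}^t K(t,u)dW_u^\alpha$ is Gaussian with deterministic variance and mean depending only on $U_t\xi_0$ and on $\int_0^{b(t)}K(t,u)dW_u^\alpha$, quantities determined by the path of $\xi^\alpha$ up to time $b(t)$. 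No use of \ref{con:sl.tnmax} or second-moment information is required; everything is driven by first-moment reduction and the ergodic-type statement of Lemma \ref{lem:ergodic}.
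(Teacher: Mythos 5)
Your proposal is correct and follows essentially the same route as the paper: Proposition \ref{prop:XFs} with $s=b(t)$, the triangle inequality over particles alive at time $b(t)$, the first-moment formula of Lemma \ref{lem:firstmom} applied to the nonnegative path functional, and then Lemma \ref{lem:ergodic}. The only cosmetic difference is that you retain the harmless constant $e^{-\beta r}$, which the paper simply drops since it is bounded by one.
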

	\begin{proof} 
		Proposition \ref{prop:XFs} yields
		\begin{align*}
			e^{-\beta t} \EE_{\xi[0,r]}(X_t(f)|\cff_{b(t)})
			=e^{-\beta b(t)}\sum_{\alpha\in\cii_{b(t)}}\EE_{\xi[0,r]}(f(\xi^\alpha_{t})|\cgg^\alpha_{b(t)})
		\end{align*}
		Hence, together with triangle inequality, we see that
		\begin{multline*}
			R(t):=\lt|e^{- \beta t}\sigma^d(t)\EE_{\xi[0,r]}(X_t(f)|\mathcal{F}_{b(t)})-e^{- \beta b(t)}X_{b(t)}(1)\mathcal Tf(U_\infty\xi_0) \rt|
			\\\le e^{-\beta b(t)}\sum_{\alpha\in\cii_{b(t)}}\lt|\sigma^d(t) \EE_{\xi[0,r]}(f(\xi^\alpha_{t})|\cgg^\alpha_{b(t)})-\mathcal Tf(U_\infty\xi_0)\rt|\,.
		\end{multline*}
		Taking expectation and applying Lemma \ref{lem:firstmom} yield
		\begin{align*}
			\EE_{\xi[0,r]}R(t)\le \EE_{\xi[0,r]} \lt|\sigma^d(t) \EE_{\xi[0,r]}(f(\xi_{t})|\cgg_{b(t)})-\mathcal Tf(U_\infty \xi_0)\rt|
		\end{align*}
		which converges to $0$ as $t\to\infty$ by Lemma \ref{lem:ergodic}.
\end{proof}
\begin{proof}[Proof of Theorem \ref{thm:wlln}] 
	We have seen in \eqref{def:F} that $e^{-\beta b(t)}X_{b(t)}(1)$ converges $\PP_{\xi[0,r]}$-almost surely to $e^{-\beta r} F$. On the other hand, the convergences in $\PP_{\xi[0,r]}$-probability of \eqref{lim:step1} and \eqref{lim:step2} are verified by Lemmas \ref{lem:step1} and \ref{lem:step2} respectively. Combining these facts yields the result.	
\end{proof}

\section{The strong law of large numbers} 
\label{sec:SLLN}
	The proof of Theorem \ref{thm:SLLN} is presented in the current section.	We follow a usual routine in showing strong laws of large numbers: first, we show almost sure convergence along a sequence of lattice times, then transfer this convergence to continuous time. Let us briefly explain the main ideas. Suppose $\{t_n\}$ is a sequence satisfying \ref{con:sl.tnmax} and $f$ is a fixed function. 
	Under conditions \ref{c0}-\ref{con:s1s}, the limits in \eqref{lim:step1} and \eqref{lim:step2} can be improved to be $\PP_{\xi[0,r]}$-a.s. along the sequence $\{t_n\}$. 
	To extend the convergence of $e^{-\beta t_n}\sigma^d(t_n) X_{t_n}(f)$ to continuous time, we use \eqref{def:Xt} to compare $X_t(f)$ with $X_{t_n}(f)$ and $X_{t_{n+1}}(f)$ for $t\in(t_n,t_{n+1}]$.

	Let us compare our approach with the literature. For branching diffusion processes,  the passage from convergence along lattice times to continuous time was employed successfully first by Asmussen and Hering in \cite{MR0420889}.  The method used in \cite{MR0420889} for showing almost sure convergence along lattice times is similar (but not identical) to ours. For the sequence $\{t_n\}=\{n \delta\}$ (with $\delta>0$)  and $f$ being indicator function of a bounded set, a main step in \cite{MR0420889} is the following almost sure limit
		\[\lim_{n\to\infty}e^{-\beta t_n}\sigma^d(t_n)\lt(X_{t_n}(f)-\EE_x(X_{t_n}(h)|\cff_{b(t_{n-1})})\rt)=0\,,\]
where $b(t)=t$ and $h$ is a principal eigenfunction of the semigroup corresponding to the mean of $X$. The passage from lattice times to continuous time was obtained using Markov's property of $X_t(f)$. This argument has been extended in later work to other situations: for more general branching diffusions in \cite{MR2641779}, for superprocesses in \cite{MR3395469,MR3352888,MR3010225}. Since eigenfunctions are used, certain conditions on the spectrum of the underlying diffusion are required. It is worth noting that these assumptions are not verified for Brownian motion.  
In our situation, the underlying process is not necessarily a diffusion on $\RR^d$, and the eigenfunctions are unknown. 
However, we succeed by adopting different choices for $t_n$, $b$ and $h$. In particular, $h=f$ and the choices for $\{t_n\}$ and $b$ are situational (see Section \ref{sec:examples} for a few examples).
In the case of Brownian motion, it is required that as $t\to\infty$, $t-b(t)$ also goes to infinity, as opposed to a finite constant as in \cite{MR0420889}.


	
	In the remaining of the current section, $X=\{X_t\}_{t\ge r}$ is a branching system starting with a typical memory $\xi[0,r]$. Almost sure convergence along lattice times is described in the following theorem.
\begin{theorem}\label{thm:lattice}
Let $\gamma>0$ and assume conditions \ref{c0}-\ref{con:s1s} are satisfied.
For every $f\in L^2(\RR^d)\cap L^1(\RR^d)$ and every sequence $\{t_n\}$ in $(r,\infty)$ satisfying $t_n\ge n^\gamma$ for all $n$ and
\begin{equation}\label{eqn:tn}
	\sum_{n=2}^\infty e^{-\beta b(t_{n-1})}\sigma^d(t_n)<\infty\,,
\end{equation}
we have with $\PP_{\xi[0,r]}$-probability one
\begin{equation}\label{eqn:SLtn}
	\lim_{n\to\infty}e^{-\beta t_n}\sigma^d(t_n)X_{t_n}(f)=e^{-\beta r} (2 \pi)^{-\frac d2}F \int_{\RR^d}\exp\lt\{-\frac12\lt|U_\infty \xi_0-\frac{y}\ell\rt|^2\rt\}f(y)dy\,.
\end{equation}		
\end{theorem}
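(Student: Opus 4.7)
The plan is to mirror the two-step decomposition used in the WLLN (Theorem \ref{thm:wlln}) and upgrade each step to $\PP_{\xi[0,r]}$-almost-sure convergence along $\{t_n\}$ via the Borel--Cantelli lemma, exploiting the summability hypothesis \eqref{eqn:tn}. Writing $s_n := b(t_{n-1})$, I decompose
\begin{equation*}
e^{-\beta t_n}\sigma^d(t_n)X_{t_n}(f) - e^{-\beta r}F\,\mathcal Tf(U_\infty \xi_0) = A_n + B_n + C_n,
\end{equation*}
where
\begin{align*}
A_n &:= e^{-\beta t_n}\sigma^d(t_n)\bigl[X_{t_n}(f)-\EE_{\xi[0,r]}(X_{t_n}(f)\mid \cff_{s_n})\bigr],\\
B_n &:= e^{-\beta t_n}\sigma^d(t_n)\EE_{\xi[0,r]}(X_{t_n}(f)\mid \cff_{s_n}) - e^{-\beta s_n}X_{s_n}(1)\,\mathcal Tf(U_\infty \xi_0),\\
C_n &:= \bigl(e^{-\beta s_n}X_{s_n}(1) - e^{-\beta r}F\bigr)\,\mathcal Tf(U_\infty \xi_0).
\end{align*}
It then suffices to prove each piece tends to $0$ a.s.

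The pieces $C_n$ and $A_n$ are essentially already available. Since $e^{-\beta s_n}\sigma^d(t_n)\to 0$ by \eqref{eqn:tn} and $\sigma(t_n)$ is bounded below by \ref{c1}, we get $s_n\to\infty$, so \eqref{def:F} yields $C_n\to 0$ a.s. For $A_n$, the variance bound \eqref{est:X|Fs} combined with $\sigma_2(t_n,r)/\sigma(t_n)\to 1$ (Remark \ref{rem:ss}) gives
\begin{equation*}
\EE_{\xi[0,r]} A_n^2 \lesssim \frac{\sigma^d(t_n)}{\sigma_2^d(t_n,r)}\,e^{-\beta s_n}\sigma^d(t_n)\|f\|^2_{L^2(\RR^d)} \lesssim e^{-\beta s_n}\sigma^d(t_n)\|f\|^2_{L^2(\RR^d)},
\end{equation*}
which is summable by \eqref{eqn:tn}; hence $\sum_n A_n^2<\infty$ a.s.\ and $A_n\to 0$ a.s.

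The main effort goes into $B_n$. By Proposition \ref{prop:XFs},
\begin{equation*}
B_n = e^{-\beta s_n}\sum_{\alpha\in\cii_{s_n}}\Bigl[\sigma^d(t_n)\EE_{\xi[0,r]}\bigl(f(\xi^\alpha_{t_n})\mid \cgg^\alpha_{s_n}\bigr)-\mathcal Tf(U_\infty \xi_0)\Bigr].
\end{equation*}
I split the bracket as $Z^\alpha_n + e_n$, where $e_n := \sigma^d(t_n)\EE_{\xi[0,r]} f(\xi_{t_n}) - \mathcal Tf(U_\infty \xi_0)$ is a deterministic scalar and $Z^\alpha_n := \sigma^d(t_n)[\EE(f(\xi^\alpha_{t_n})\mid \cgg^\alpha_{s_n}) - \EE_{\xi[0,r]} f(\xi_{t_n})]$ is mean-zero. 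The scalar $e_n$ tends to $0$ by the same Gaussian change-of-variables computation as in the proof of Lemma \ref{lem:ergodic}, with the conditioning trivialised to the fixed memory $\xi[0,r]$ and using \ref{c1}, \ref{con:U}, \ref{con:s1s} and the typicality of $\xi[0,r]$; consequently the deterministic contribution $e^{-\beta s_n}X_{s_n}(1)\,e_n\to e^{-\beta r}F\cdot 0=0$ a.s. It then remains to show $B'_n := e^{-\beta s_n}\sum_\alpha Z^\alpha_n\to 0$ a.s., which I will deduce from Borel--Cantelli by proving $\sum_n\EE(B'_n)^2<\infty$. Writing $\EE(f(\xi^\alpha_{t_n})\mid\cgg^\alpha_{s_n})$ as a Gaussian convolution $(p_2(t_n,s_n,\cdot)\ast f)(U_{t_n}\xi_0+M^\alpha_n)$ with $M^\alpha_n\sim\CN(0,\sigma_1^2(t_n,s_n)I)$ and invoking the $L^2$ modulus of continuity $\|p_2(\cdot-u)-p_2(\cdot)\|^2_{L^2}\lesssim |u|^2\sigma_2^{-d-2}(t_n,s_n)$ of the Gaussian kernel (together with $\sigma_2\sim\sigma$) yields the one-particle bound $\EE(Z^\alpha_n)^2\lesssim \sigma^d(t_n)(\sigma_1(t_n,s_n)/\sigma(t_n))^2\|f\|^2_{L^2}$. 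The diagonal contribution to $\EE(B'_n)^2$ is therefore $\lesssim e^{-\beta s_n}\sigma^d(t_n)(\sigma_1/\sigma)^2(t_n,s_n)\|f\|^2_{L^2}$, summable by \eqref{eqn:tn}.

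The hard part is the off-diagonal contribution to $\EE(B'_n)^2$: distinct particles $\alpha\ne\alpha'\in\cii_{s_n}$ share the driving Brownian motion up to the death time of their greatest common ancestor, so $Z^\alpha_n$ and $Z^{\alpha'}_n$ are correlated through the common Gaussian increment. The genealogical bookkeeping in the proof of the second moment formula \eqref{eqn:m2} organizes this contribution as an integral over the last branching time $u\in[r,s_n]$ with weight $e^{-\beta u}$, and combining this exponential decay with the logarithmic rate $\sigma_1(t_n,s_n)/\sigma(t_n)=o(1/\sqrt{\ln t_n})$ supplied by \ref{con:s1s} and the polynomial growth $t_n\ge n^\gamma$ should deliver the required summability. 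This off-diagonal estimate is the chief obstacle, and it is precisely where both the $\sqrt{\ln t_n}$ refinement of \ref{con:s1s} (absent in the WLLN hypothesis \ref{con:ws1s}) and the polynomial lower bound on $t_n$ enter: the former is needed to beat the $e^{\beta s_n}$ blow-up coming from the cross-particle sum, while the latter turns any $1/\ln t_n$ gain into summability in $n$.
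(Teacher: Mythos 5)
Your decomposition into $A_n$, $B_n$, $C_n$ is exactly the paper's (Lemma \ref{lem:SLLN1} for $A_n$, Lemma \ref{lem:SLLN2} for $B_n$, the martingale limit \eqref{def:F} for $C_n$), and your treatments of $A_n$ and $C_n$ are correct. The gap is in $B_n$, precisely at the point you yourself flag as the chief obstacle: the off-diagonal part of $\EE_{\xi[0,r]}(B_n')^2$ is not summable under the stated hypotheses — in general it does not even stay bounded — so the second-moment/Borel--Cantelli route cannot close. Concretely, with your $s_n=b(t_{n-1})$, the genealogical second-moment formula organizes the cross term as
\begin{equation*}
e^{-2\beta s_n}\,V\psi''(1^-)\,e^{2\beta(s_n-r)}\int_r^{s_n}\sigma^{2d}(t_n)\,\mathrm{Var}\bigl(\EE_{\xi[0,r]}(f(\xi_{t_n})\mid \cgg_u)\bigr)\,e^{-\beta(u-r)}\,du\,,
\end{equation*}
so the exponentials cancel completely ($e^{-2\beta s_n}e^{2\beta(s_n-r)}=e^{-2\beta r}$): there is no exponential blow-up left for the logarithmic gain of \ref{con:s1s} to beat. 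With the $L^2$-modulus (Cauchy--Schwarz) bound you invoke, $\sigma^{2d}(t_n)\mathrm{Var}(\cdot)\lesssim \sigma^d(t_n)\,\sigma_1^2(t_n,u)/\sigma^2(t_n)\,\|f\|_{L^2}^2$, and bounding $\sigma_1(t_n,u)\le\sigma_1(t_n,s_n)$ gives an off-diagonal estimate of order $o\bigl(\sigma^d(t_n)/\ln t_n\bigr)$. In the paper's motivating example of branching fractional Brownian motion, $\sigma(t)=\lambda t^H$ and $t_n\sim n^\kappa$, so this bound diverges; when $\ell<\infty$ (fractional Ornstein--Uhlenbeck) it is only $o(1/\ln n)$, whose series still diverges. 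Moreover \ref{con:s1s} controls $\sigma_1(t,\cdot)$ only at the single point $b(t)$, while the cross term needs quantitative control of $\sigma_1(t_n,u)/\sigma(t_n)$ integrated over all split times $u\in[r,s_n]$, which is not among the hypotheses, and $f\in L^1\cap L^2$ gives no sharper modulus than the one you used. So the step ``the off-diagonal estimate ... should deliver the required summability'' is where the argument fails.

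This is exactly why the paper's Lemma \ref{lem:SLLN2} avoids any cross-particle second moment. There one writes, via Proposition \ref{prop:XFs}, the conditional expectation as a sum over $\alpha\in\cii_{b(t_{n-1})}$ and truncates each particle on the event $\{|\eta_n^\alpha|\le 2\sqrt{\log n}\,\sigma_n\}$ with $\eta_n^\alpha=\int_r^{b(t_{n-1})}K(t_n,u)\,dW_u^\alpha$. On the good event each summand is bounded by a deterministic quantity tending to $0$ (uniformly over $|z|\le 2\sqrt{\log n}\,\sigma_n$ — this is where the $\sqrt{\ln t}$ in \ref{con:s1s} and the assumption $t_n\ge n^\gamma$ are actually used) plus a tail of $f$, and the whole good-event sum is controlled by that quantity times $e^{-\beta b(t_{n-1})}X_{b(t_{n-1})}(1)$, which converges almost surely by the Galton--Watson martingale limit — no summability is needed there. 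The bad event is handled by a first-moment bound (Lemma \ref{lem:firstmom}) together with the Gaussian tail $e^{-a_n^2/2}/a_n=n^{-2}(\ln n)^{-1/2}$, which is summable, and Borel--Cantelli. To salvage your route you would have to either truncate in this pathwise fashion or impose additional hypotheses (decay of $f$ beyond $L^1\cap L^2$, quantitative control of $\sigma_1(t,u)$ for all $u$, and a large enough $\gamma$) that are not part of Theorem \ref{thm:lattice}.
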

	To prove Theorem \ref{thm:lattice}, we follow the same strategy of showing Theorem \ref{thm:wlln}. The main difference here is the convergence in probability is upgraded to almost sure convergence. More precisely, we have the following two lemmas, which are updated versions of Lemmas \ref{lem:step1} and \ref{lem:step2}.
	\begin{lemma}\label{lem:SLLN1}
		Under the hypothesis of Theorem \ref{thm:lattice}, we have
		\begin{equation}\label{est:Xnbn}
			\lim_{n\to\infty} e^{-\beta t_n}\sigma^{d}(t_n)\lt|X_{t_n}(f)-\EE_{\xi[0,r]}(X_{t_n}(f)|\cff_{b(t_{n-1})}) \rt|=0\quad \PP_{\xi[0,r]}- \mathrm{ a.s.}
		\end{equation}
	\end{lemma}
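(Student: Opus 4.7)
The plan is to deduce the almost sure convergence from a standard second moment plus Borel--Cantelli argument, leveraging the bound \eqref{est:X|Fs} already established in Proposition \ref{prop:PXtFt0}.

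First I would apply \eqref{est:X|Fs} with the choices $t = t_n$ and $s = b(t_{n-1})$, which gives
\begin{equation*}
\EE_{\xi[0,r]}\lt(\lt(X_{t_n}(f)-\EE_{\xi[0,r]}(X_{t_n}(f)|\cff_{b(t_{n-1})})\rt)^2 \rt)\lesssim  e^{2 \beta t_n- \beta b(t_{n-1})- \beta r}\sigma_2^{-d}(t_n,r)\|f\|_{L^2(\RR^d)}^2\,.
\end{equation*}
Multiplying by $e^{-2\beta t_n}\sigma^{2d}(t_n)$ and invoking Chebyshev's inequality, for any $\epsilon>0$,
\begin{equation*}
\PP_{\xi[0,r]}\lt(e^{-\beta t_n}\sigma^d(t_n)\lt|X_{t_n}(f)-\EE_{\xi[0,r]}(X_{t_n}(f)|\cff_{b(t_{n-1})})\rt|>\epsilon\rt)\lesssim \frac{\|f\|_{L^2(\RR^d)}^2}{\epsilon^2}\,\frac{\sigma^{2d}(t_n)}{\sigma_2^d(t_n,r)}\,e^{-\beta b(t_{n-1})}\,.
\end{equation*}

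Next I would simplify the ratio $\sigma^{2d}(t_n)/\sigma_2^d(t_n,r)$. Since \ref{con:s1s} implies \ref{con:ws1s}, Remark \ref{rem:ss} gives $\sigma_2(t,r)/\sigma(t)\to 1$ as $t\to\infty$, so for $n$ large enough,
\begin{equation*}
\frac{\sigma^{2d}(t_n)}{\sigma_2^d(t_n,r)}\le 2^d\,\sigma^d(t_n)\,.
\end{equation*}
Consequently the probability above is bounded (up to a constant depending on $f,\epsilon,r$) by $e^{-\beta b(t_{n-1})}\sigma^d(t_n)$. By the summability condition in \eqref{sumtn},
\begin{equation*}
\sum_{n=2}^\infty e^{-\beta b(t_{n-1})}\sigma^d(t_n)<\infty\,,
\end{equation*}
so the Borel--Cantelli lemma yields that, $\PP_{\xi[0,r]}$-almost surely,
\begin{equation*}
\limsup_{n\to\infty}e^{-\beta t_n}\sigma^d(t_n)\lt|X_{t_n}(f)-\EE_{\xi[0,r]}(X_{t_n}(f)|\cff_{b(t_{n-1})})\rt|\le \epsilon\,.
\end{equation*}

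Finally, I would take $\epsilon$ through the countable sequence $\{1/k\}_{k\in\NN}$ and intersect the corresponding null sets to promote the $\limsup\le\epsilon$ statement to a genuine limit of $0$, which is precisely \eqref{est:Xnbn}. No real obstacle arises here: the whole argument is second moment plus Borel--Cantelli, with the summability hypothesis on $\{t_n\}$ in \ref{con:sl.tnmax} precisely calibrated to make the series converge after cancellation of the $e^{-2\beta t_n}\sigma^{2d}(t_n)$ factor against the right-hand side of \eqref{est:X|Fs}. The only mildly delicate point is recognizing that the asymptotic equivalence $\sigma_2(t_n,r)\sim\sigma(t_n)$ (via \ref{con:ws1s}) is what converts the bound $\sigma_2^{-d}(t_n,r)$ into an effective $\sigma^{-d}(t_n)$ so that the exponents and powers of $\sigma$ match \eqref{sumtn}.
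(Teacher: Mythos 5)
Your argument is correct and is essentially the paper's own proof: apply the variance bound \eqref{est:X|Fs} with $t=t_n$, $s=b(t_{n-1})$, control $\sigma^d(t_n)/\sigma_2^d(t_n,r)$ via Remark \ref{rem:ss}, sum using the hypothesis $\sum_n e^{-\beta b(t_{n-1})}\sigma^d(t_n)<\infty$ (stated as \eqref{eqn:tn} in Theorem \ref{thm:lattice}, the same condition as \eqref{sumtn}), and conclude by Chebyshev plus Borel--Cantelli. The only difference is that you spell out the Chebyshev and $\epsilon=1/k$ bookkeeping that the paper leaves implicit, which is fine.
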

	\begin{proof}		
		The estimate in Proposition \ref{prop:PXtFt0} yields
		\begin{align*}
			e^{-2\beta t_n}\sigma^{2d}(t_n)\EE_{\xi[0,r]} \lt(\lt(X_{t_n}(f)-\EE_{\xi[0,r]} (X_{t_n}(f)|\cff_{b(t_{n-1})})\rt)^2 \rt)
			\lesssim \|f\|_{L^2(\RR^d)}^2 \frac{\sigma^d(t_n)}{\sigma^d_2(t_n,r)} e^{-\beta b(t_{n-1})}\sigma^{d}(t_n)\,.
		\end{align*}
		From Remark \ref{rem:ss}, $\lim_{t\to\infty} \frac{\sigma^d(t_n)}{\sigma^d_2(t_n,r)}=1$, it follows 
		\begin{equation*}
			\sum_{n=1}^\infty e^{-2\beta b(t_n)}\sigma^{2d}(t_n)\EE_{\xi[0,r]} \lt(\lt(X_{t_n}(f)-\EE_{\xi[0,r]}(X_{t_n}(f)|\cff_{b(t_{n-1})})\rt)^2 \rt)\lesssim \|f\|_{L^2}^2\sum_{n=1}^\infty e^{-\beta b(t_{n-1})}\sigma^d(t_n)\,,
		\end{equation*}
		which is finite due to \eqref{eqn:tn}. Hence, an application of Borel-Cantelli's lemma yields \eqref{est:Xnbn}. 
	\end{proof}
	\begin{lemma}\label{lem:SLLN2} Under the hypothesis of Theorem \ref{thm:lattice}, we have
		\begin{equation}
			\lim_{n\to\infty}\lt|e^{-\beta t_n}\sigma^d(t_n)\EE_{\xi[0,r]}(X_{t_n}(f)|\cff_{b(t_{n-1})})-e^{-\beta b(t_{n-1})}X_{b(t_{n-1})}(1) \mathcal Tf(U_\infty \xi_0)\rt|=0\quad\PP_{\xi[0,r]}-\mathrm{a.s.}
		\end{equation}
		where we recall $\mathcal{T}f$ is defined in \eqref{def:Tf}.
	\end{lemma}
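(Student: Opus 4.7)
The strategy is to replicate the proof of Lemma~\ref{lem:step2} but to strengthen its $L^1$ estimate into a summable one, so that a Borel--Cantelli argument yields almost sure convergence along the lattice $\{t_n\}$. Applying Proposition~\ref{prop:XFs} with $s=b(t_{n-1})$ and $t=t_n$, and writing $e^{-\beta b(t_{n-1})}X_{b(t_{n-1})}(1)\mathcal{T}f(U_\infty\xi_0)=e^{-\beta b(t_{n-1})}\sum_{\alpha\in\cii_{b(t_{n-1})}}\mathcal{T}f(U_\infty\xi_0)$, the triangle inequality bounds the quantity of interest by
\[
Z_n:=e^{-\beta b(t_{n-1})}\sum_{\alpha\in\cii_{b(t_{n-1})}}\left|\sigma^d(t_n)\EE_{\xi[0,r]}(f(\xi^\alpha_{t_n})|\cgg^\alpha_{b(t_{n-1})})-\mathcal{T}f(U_\infty\xi_0)\right|.
\]
Because the Volterra representation is canonical, the inner conditional expectation equals $H_n(M^\alpha_n)$, where $M^\alpha_n:=\int_0^{b(t_{n-1})}K(t_n,u)\,dW^\alpha_u$ is centered Gaussian with variance $\sigma_1^2(t_n,b(t_{n-1}))$ and $H_n(z):=\int_{\RR^d} f(y)\,p_2(t_n,b(t_{n-1}),y-U_{t_n}\xi_0-z)\,dy$.

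The new ingredient (compared with Lemma~\ref{lem:step2}) is a truncation. Fix a constant $c>2/\gamma$ and set $r_n:=\sigma_1(t_n,b(t_{n-1}))\sqrt{c\ln t_n}$; then split $Z_n=Z_n^{(1)}+Z_n^{(2)}$ according to whether $|M^\alpha_n|\le r_n$ or $|M^\alpha_n|>r_n$. Using $\sigma_1(t_n,b(t_{n-1}))\le\sigma_1(t_n,b(t_n))$, condition \ref{con:s1s} gives $r_n/\sigma(t_n)\to 0$, while the standard Gaussian tail estimate yields $\PP(|M^\alpha_n|>r_n)\le 2\exp(-\tfrac{c}{2}\ln t_n)\le 2\,t_n^{-c/2}\lesssim n^{-c\gamma/2}$, which is summable because $c\gamma>2$.

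For the typical piece $Z_n^{(1)}$, the dominated-convergence computation of Lemma~\ref{lem:ergodic}---now carried out with the random conditioning variable replaced by an arbitrary deterministic $z$ satisfying $|z|\le r_n$---combined with $r_n/\sigma(t_n)\to 0$ gives the pointwise limit $\sigma^d(t_n)H_n(z_n)\to\mathcal{T}f(U_\infty\xi_0)$ for every sequence $z_n$ with $|z_n|\le r_n$; a standard contradiction argument then upgrades this to the uniform statement
\[
\varepsilon_n:=\sup_{|z|\le r_n}\left|\sigma^d(t_n)H_n(z)-\mathcal{T}f(U_\infty\xi_0)\right|\longrightarrow 0.
\]
Consequently $Z_n^{(1)}\le\varepsilon_n\,e^{-\beta b(t_{n-1})}X_{b(t_{n-1})}(1)$, which tends to $0$ $\PP_{\xi[0,r]}$-almost surely because $e^{-\beta b(t_{n-1})}X_{b(t_{n-1})}(1)\to e^{-\beta r}F$ almost surely by~\eqref{def:F}.

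For the atypical piece $Z_n^{(2)}$, the uniform bound $\sigma^d(t_n)H_n(z)\lesssim (\sigma(t_n)/\sigma_2(t_n,b(t_{n-1})))^d\|f\|_{L^1(\RR^d)}\lesssim 1$ gives $Z_n^{(2)}\lesssim e^{-\beta b(t_{n-1})}\sum_{\alpha\in\cii_{b(t_{n-1})}}1_{|M^\alpha_n|>r_n}$. Applying Lemma~\ref{lem:firstmom} to the trajectory functional $\xi\mapsto 1_{|\int_0^{b(t_{n-1})}K(t_n,u)\,dW_u|>r_n}$ yields $\EE_{\xi[0,r]}Z_n^{(2)}\lesssim\PP(|M_n|>r_n)\lesssim n^{-c\gamma/2}$, so $\sum_n\EE_{\xi[0,r]}Z_n^{(2)}<\infty$ and hence $Z_n^{(2)}\to 0$ almost surely. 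Combining these pieces, $Z_n\to 0$ $\PP_{\xi[0,r]}$-a.s., which proves the lemma. The main obstacle I anticipate is the uniform limit $\varepsilon_n\to 0$: this is precisely where condition \ref{con:s1s}, with its quantitative $\sqrt{\ln t}$ factor, becomes essential---the qualitative condition \ref{con:ws1s} of the weak law is insufficient to guarantee that the ball $\{|z|\le r_n\}$ shrinks in the relative scale $\sigma(t_n)$ while also absorbing the Gaussian-tail count.
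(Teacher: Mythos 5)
Your overall architecture is the same as the paper's (truncate the conditioning Gaussian at scale $\sigma_1\sqrt{\ln t_n}$, use uniform convergence of the rescaled Gaussian kernel on the truncation ball together with the a.s.\ limit of $e^{-\beta b(t_{n-1})}X_{b(t_{n-1})}(1)$ for the main piece, and a Gaussian tail plus first moment plus Borel--Cantelli for the complement), but there is a genuine gap in how you treat the initial memory when $r>0$. Under $\PP_{\xi[0,r]}$ the path $W$ on $[0,r]$ is frozen, so
\[
M^\alpha_n=\int_0^{b(t_{n-1})}K(t_n,u)\,dW^\alpha_u
= m_n+\int_r^{b(t_{n-1})}K(t_n,u)\,dW^\alpha_u,\qquad m_n:=\int_0^{r}K(t_n,u)\,dW_u,
\]
where $m_n$ is a deterministic vector common to all particles. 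Hence $M^\alpha_n$ is \emph{not} centered and does not have variance $\sigma_1^2(t_n,b(t_{n-1}))$; your tail bound $\PP_{\xi[0,r]}(|M^\alpha_n|>r_n)\le 2\exp(-\tfrac c2\ln t_n)$ (note this must be the conditional law, which Lemma \ref{lem:firstmom} produces, not the unconditional one) is therefore unjustified. The typical-memory condition \eqref{TypMem} only gives $|m_n|=o(\sigma(t_n))$, while your truncation radius $r_n=\sigma_1(t_n,b(t_{n-1}))\sqrt{c\ln t_n}$ is also $o(\sigma(t_n))$ by \ref{con:s1s}; nothing in the hypotheses controls the ratio $|m_n|/r_n$. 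If $|m_n|\gg r_n$ along a subsequence, the Gaussian $M^\alpha_n$ concentrates outside your ball, $\EE_{\xi[0,r]}Z^{(2)}_n$ stays of order one, and the decomposition $Z_n=Z^{(1)}_n+Z^{(2)}_n$ no longer yields the conclusion, even though the lemma itself remains true.

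The fix is exactly the paper's device: center the truncation at the memory rather than at the origin. Set $\eta^\alpha_n=\int_r^{b(t_{n-1})}K(t_n,u)\,dW^\alpha_u$, which \emph{is} centered under $\PP_{\xi[0,r]}$ with variance $\sigma_n^2=\int_r^{b(t_{n-1})}|K(t_n,u)|^2du\le\sigma_1^2(t_n,b(t_{n-1}))$, truncate on $\{|\eta^\alpha_n|<a_n\sigma_n\}$ with $a_n\sim\sqrt{\log n}$, and carry the deterministic shift $m_n$ explicitly inside the supremum defining your $\varepsilon_n$; there the ratio $|m_n|/\sigma_2(t_n,b(t_{n-1}))$ is killed by \eqref{TypMem} (this is the only place the typical-memory assumption enters, and your write-up never actually invokes it), while the ratio $a_n\sigma_n/\sigma_2(t_n,b(t_{n-1}))$ is killed by \ref{con:s1s} together with $t_n\ge n^\gamma$. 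With that modification your argument goes through and coincides with the paper's proof; for $r=0$ your version is already correct as written. The remaining points of your proposal (uniform convergence on the shrinking ball via dominated convergence for $f\in L^1$, summability $t_n^{-c/2}\lesssim n^{-c\gamma/2}$, and the first-moment Borel--Cantelli step) are sound, and you are right that the quantitative $\sqrt{\ln t}$ in \ref{con:s1s} is what makes the truncation compatible with the Gaussian tail; note only that both you and the paper implicitly use $b(t_{n-1})\le b(t_n)$ when invoking \ref{con:s1s} at time $t_n$.
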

	\begin{proof}
		For each $\alpha\in\cii^l$ and $n$ sufficiently large, we put
		\begin{equation*}
			\eta_n^\alpha
			=\int_r^{b(t_{n-1})}K(t_n,u)dW^\alpha_u\,.
		\end{equation*}
		Each $\eta_n^\alpha$ is centered normal random variable with variance $\sigma_n^2=\int_r^{b(t_{n-1})}|K(t_n,u)|^2du$.
		Note that if $\alpha$ is alive at time $b(t_{n-1})$ then $\eta_n^\alpha$ is $\cff_{b(t_{n-1})}$-measurable. 
		Let $a_n=2\sqrt{\log n}$,  $A_n=\{z\in\RR^d:|z|<a_n \sigma_n \}$. $A_n^c$ denotes the complement of $A_n$. 
		As in the proof of Lemma \ref{lem:step2}, applying Proposition \ref{prop:XFs} we have
		\begin{align*}
			&\lt|e^{-\beta t_n}\sigma^d(t_n) \EE_{\xi[0,r]}(X_{t_n}(f)|\cff_{b(t_{n-1})}) -e^{-\beta b(t_{n-1})}X_{b(t_{n-1})}(1) \mathcal T f(U_\infty x)\rt|
			\\&\le e^{-\beta b(t_{n-1})}\sum_{\alpha\in\cii_{b(t_{n-1})}}\lt|\sigma^d(t_n) \EE_{\xi[0,r]}(f(\xi^\alpha_{t_n})|\cgg_{b(t_{n-1})}^\alpha)- \mathcal T f(U_\infty x)\rt|
			\\&=J_1(n)+J_2(n)\,,
		\end{align*}
		where
		\begin{equation*}
			J_1(n)=e^{-\beta b(t_{n-1})}\sum_{\alpha\in\cii_{b(t_{n-1})}}\lt|\sigma^d(t_n) \EE_{\xi[0,r]}(f(\xi^\alpha_{t_n})|\cgg_{b(t_{n-1})}^\alpha)- \mathcal T f(U_\infty x)\rt|1_{A_n}(\eta_{n}^\alpha)\,,
		\end{equation*}
		\begin{equation*}
			J_2(n) =e^{-\beta b(t_{n-1})}\sum_{\alpha\in\cii_{b(t_{n-1})}}\lt|\sigma^d(t_n) \EE_{\xi[0,r]}(f(\xi^\alpha_{t_n})|\cgg_{b(t_{n-1})}^\alpha)- \mathcal T f(U_\infty x)\rt|1_{A_n^c}(\eta_{n}^\alpha)\,.
		\end{equation*}
		We will show below that $J_1(n)$ and $J_2(n)$ converge to 0 almost surely. Let us consider $J_1$ first. Let $M$ be a fixed positive number. We put
	\begin{equation*}
		s_n= \sup_{|y|\le M,|z|\le a_n \sigma_n}g_n(y,z )\,,
	\end{equation*}
	where 
	\begin{multline*}
	g_n(y,z)= \lt|\frac{\sigma^d(t_n)}{\sigma_2^d(t_n,b(t_{n-1}))} \exp\lt\{-\frac{|\int_0^rK(t_n,u)dW_u+ z +U_{t_n} \xi_0-y|^2}{2 \sigma_2^2(t_n,b(t_{n-1}))} \rt\}\rt.
		\\\lt. -\exp\lt\{-\frac12\lt|U_\infty \xi_0-\frac{y}\ell\rt|^2\rt\} \rt|\,.
\end{multline*}
By triangle inequality
\begin{multline*}
	\sup_{|y|\le M,|z|\le a_n \sigma_n} \lt|\frac{|\int_0^rK(t_n,u)dW_u+z  +U_{t_n} \xi_0-y|}{ \sigma_2(t_n,b(t_{n-1}))}-\lt|U_\infty \xi_0-\frac{y}\ell\rt| \rt|
	\\\le \frac{|\int_0^rK(t_n,u)dW_u|}{\sigma_2(t_n, b(t_{n-1}))}+ \frac{a_n \sigma_n}{\sigma_2(t_n, b(t_{n-1}))}
	+\lt|\frac{U_{t_n}\xi_0}{\sigma_2(t_n, b(t_{n-1}))}-U_\infty \xi_0\rt|
	+M\lt|\frac1{\sigma_2(t_n, b(t_{n-1}))}-\frac1\ell \rt|\,,
\end{multline*}
which converges to $0$ as $n\to\infty$ by \ref{c1}-\ref{con:s1s}, Remark \ref{rem:ss} and the typical memory assumption. 
(For clarity, the second term goes to $0$ by (C3) since $t_n\ge n^\gamma$
and\\
\(\displaystyle \frac{a_n \sigma_n}{\sigma_2(t_n, b(t_{n-1}))}=2\sqrt{\frac{\log(n)}{\log(t_n)}}
\sqrt{\log(t_n)}\frac{|\sigma_1^2(t_n, b(t_{n-1}))-\sigma_1^2(t_n, r)|^\frac12/\sigma(t_n)}{|\sigma^2(t_n)-\sigma_1^2(t_n,b(t_{n-1}))|^\frac12/\sigma(t_n)}\rightarrow \frac{0-0}{1-0}=0
\).)
It follows that $s_n$ also converges to 0 as $n\to\infty$.
		Using explicit densities of normal random variables, we have
		\begin{align*}
			\lt|\sigma^d(t_n) \EE_{\xi[0,r]}(f(\xi^\alpha_{t_n})|\cgg_{b(t_{n-1})}^\alpha)- \mathcal T f(U_\infty \xi_0)\rt|1_{A_n}(\eta_{n}^\alpha)
			&\lesssim\int_{\RR^d}g_n(y,\eta_{n}^\alpha) f(y)dy1_{A_n}(\eta_{n}^\alpha)
			\\&\lesssim s_n\int_{|y|\le M} |f(y)|dy+\int_{|y|>M}|f(y)|dy\,.
		\end{align*}
		It follows that
		\begin{equation*}
			J_1(n)\lesssim \lt(s_n\int_{|y|\le M} |f(y)|dy+\int_{|y|>M}|f(y)|dy\rt) e^{-\beta b(t_{n-1})}X_{b(t_{n-1})}(1)\,.
		\end{equation*}
		Sending $n$ to infinity and using \eqref{def:F}, we obtain
		\begin{equation*}
			\limsup_{n\to\infty}J_1(n)\lesssim F\int_{|y|>M}|f(y)|dy\quad\PP_{\xi[0,r]}-\mathrm{a.s.}
		\end{equation*}
		Sending $M$ to infinity implies that $J_1(n)$ converges to 0 almost surely. 

		For $J_2$, we note that $\lt|\sigma^d(t_n) \EE_{\xi[0,r]}(f(\xi^\alpha_{t_n})|\cgg_{b(t_{n-1})}^\alpha)- \mathcal T f(U_\infty \xi_0)\rt|$ is bounded by a constant times $\|f\|_{L^1(\RR^d)}$. Thus,
		\begin{equation*}
			J_2(n)\lesssim \|f\|_{L^1(\RR^d)}e^{-\beta b(t_{n-1})}\sum_{\alpha\in\cii_{b(t_{n-1})}}1_{A_n^c}(\eta_{n}^\alpha)\,.
		\end{equation*}
		Now applying Lemma \ref{lem:firstmom} yields
		\begin{equation*}
			\EE_{\xi[0,r]}J_2(n)\lesssim \|f\|_{L^1(\RR^d)}\PP_{\xi[0,r]}(|\eta_{n}|>a_n \sigma_n)\,.
		\end{equation*}
		In addition, by a standard tail estimate for normal random variables
		\begin{equation*}
			\PP_{\xi[0,r]}(|\eta_{n}|>a_n \sigma_n)\le 2 \frac{e^{-a_n^2/2}}{a_n}=\frac1{n^2 \sqrt{\ln n}}\,.
		\end{equation*}
		Altogether, we obtain
		\begin{equation*}
			\sum_{n=2}^\infty\EE_{\xi[0,r]}J_2(n) \lesssim\|f\|_{L^1(\RR^d)} \sum_{n=2}^\infty \frac1{n^2 \sqrt{\ln n}}\,,
		\end{equation*}
		which is finite. By Borel-Cantelli lemma, this implies $J_2(n)$ converges to 0 almost surely. 
	\end{proof}
	\begin{proof}[Proof of Theorem \ref{thm:lattice}]
		It is evident from Lemmas \ref{lem:SLLN1}, \ref{lem:SLLN2} and \eqref{def:F}.
	\end{proof}
	We state the following result from \cite[Lemma 7]{MR3131303}, which is based upon Theorem 6 in \cite{MR2673979}.
\begin{lemma}
\label{countable}Let  $E$ be a topological space with a countable base and $B(E)$ be the space of bounded Borel measurable functions on $E$. Suppose that $ \left\{\nu _{t}\right\}\cup \left\{\nu \right\}$ are (possibly non-finite) Borel measures; $ f\in B\left(E\right)$ satisfies $ 0<\nu \left(f\right)<\infty$; $\mathcal M\subset B(E)$ strongly separates points, is countable and is closed under multiplication; and
 \[\nu _{t}\left(gf\right)\rightarrow \nu \left(gf\right)\]for all $ g\in \mathcal M\cup \{1\}$. Then,
 \[\nu _{t}\left(gf\right)\rightarrow \nu \left(gf\right)\]for all bounded continuous functions $g$ on $E$.
\end{lemma}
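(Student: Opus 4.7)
The plan is to reduce the statement to weak convergence of finite Borel measures on $E$, and then invoke a convergence-determining-class argument of Stone--Weierstrass type.

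\textbf{Reduction to finite measures.} Without loss of generality assume $f\ge 0$ (treating $f^+$ and $f^-$ separately). Define the weighted Borel measures $\mu_t(A):=\nu_t(f\mathbf{1}_A)$ and $\mu(A):=\nu(f\mathbf{1}_A)$. Applying the hypothesis with $g\equiv 1\in\mathcal M\cup\{1\}$ yields $\mu_t(E)=\nu_t(f)\to\nu(f)=\mu(E)\in(0,\infty)$, so $\{\mu_t(E)\}$ is uniformly bounded and the target conclusion is precisely the weak convergence $\mu_t\Rightarrow\mu$ of finite Borel measures on $E$.

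\textbf{Convergence on an algebra.} Let $\mathcal A$ be the unital algebra generated by $\mathcal M$. Because $\mathcal M$ is closed under multiplication, every element of $\mathcal A$ is a finite linear combination of elements of $\mathcal M\cup\{1\}$, so the hypothesis immediately gives $\mu_t(h)\to\mu(h)$ for every $h\in\mathcal A$. By construction $\mathcal A$ is a countable algebra of bounded continuous functions on $E$ containing the constants and strongly separating points.

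\textbf{Upgrade to weak convergence, and the main obstacle.} To pass from convergence on $\mathcal A$ to convergence on all bounded continuous $g$, I would compactify $E$: enumerate $\mathcal M=\{g_k\}_{k\ge 1}$, rescale so $\|g_k\|_\infty\le 1$, embed $E$ into $[-1,1]^{\NN}$ via $\iota(x):=(g_k(x))_{k\ge 1}$, and let $\widehat E:=\overline{\iota(E)}$ be the (compact metric) closure. The pushforwards $\widehat\mu_t,\widehat\mu$ are finite Borel measures on $\widehat E$ with uniformly bounded mass, and because $\mathcal A$ pulls back from polynomials in the coordinate projections of $\widehat E$, the Stone--Weierstrass theorem combined with a standard $\varepsilon/3$ argument (using $\sup_t\widehat\mu_t(\widehat E)<\infty$) promotes convergence on $\mathcal A$ to weak convergence $\widehat\mu_t\Rightarrow\widehat\mu$ on $C(\widehat E)$. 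The main obstacle is the descent from this weak convergence on $\widehat E$ back to weak convergence of $\mu_t$ on $E$: a bounded continuous $g:E\to\RR$ produces $g\circ\iota^{-1}$ on $\iota(E)$ which in general admits no continuous extension to $\widehat E$. One resolves this by using inner regularity of $\widehat\mu$ on the Polish space $\widehat E$ to choose, for each $\varepsilon>0$, a compact $K_\varepsilon\subset\iota(E)$ with $\widehat\mu(\widehat E\setminus K_\varepsilon)<\varepsilon$; extending $(g\circ\iota^{-1})|_{K_\varepsilon}$ to $h_\varepsilon\in C(\widehat E)$ by Tietze with $\|h_\varepsilon\|_\infty\le\|g\|_\infty$; and using Urysohn-type continuous dominators of $\mathbf{1}_{K_\varepsilon}$ on $\widehat E$ to control the excess mass $\widehat\mu_t(\widehat E\setminus K_\varepsilon)$ uniformly in $t$ (it is here, in constructing these dominators, that the strong-separation hypothesis on $\mathcal M$, as opposed to mere separation, enters). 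Since $\widehat\mu_t$ is supported on $\iota(E)$, one then has $|\mu_t(g)-\widehat\mu_t(h_\varepsilon)|\le 2\|g\|_\infty\widehat\mu_t(\widehat E\setminus K_\varepsilon)$; sending first $t\to\infty$ (using $\widehat\mu_t(h_\varepsilon)\to\widehat\mu(h_\varepsilon)$ and the uniform mass control just described) and then $\varepsilon\to 0$ (using the analogous bound $|\widehat\mu(h_\varepsilon)-\mu(g)|\le 2\|g\|_\infty\varepsilon$) yields $\mu_t(g)\to\mu(g)$, completing the proof.
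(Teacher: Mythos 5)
You should note first that the paper contains no proof of this lemma: it is quoted verbatim from Lemma 7 of \cite{MR3131303} (which in turn rests on Theorem 6 of \cite{MR2673979}), so your argument has to stand on its own. Your overall route (weight by $f$, reduce to convergence of the finite measures $\mu_t=f\nu_t$, compactify through the countable multiplicative family, Stone--Weierstrass on $\widehat E$, then descend) is the standard one and is in the spirit of the cited results, but the decisive descent step fails as written. The inequality $|\mu_t(g)-\widehat\mu_t(h_\varepsilon)|\le 2\|g\|_\infty\,\widehat\mu_t(\widehat E\setminus K_\varepsilon)$ is correct, but the claimed uniform-in-$t$ smallness of $\widehat\mu_t(\widehat E\setminus K_\varepsilon)$ cannot be extracted from $\widehat\mu_t\Rightarrow\widehat\mu$: the set $\widehat E\setminus K_\varepsilon$ is \emph{open}, and weak convergence (with or without Urysohn-type dominators, whose construction on the compact metric $\widehat E$ needs no strong separation at all) only bounds limsups of measures of \emph{closed} sets. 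Concretely, take $E=\RR$, $\mu_t=\delta_{1/t}$, $\mu=\delta_0$, and $K_\varepsilon$ a compact containing $0$ but no point $1/t$: then $\widehat\mu_t(\widehat E\setminus K_\varepsilon)=1$ for every $t$, even though the conclusion of the lemma holds. So the step is not merely unproved, it is false, and strong separation is not what is missing there. The standard repair: cover $K_\varepsilon$ by finitely many balls on which both $h_\varepsilon$ and $g\circ\iota^{-1}$ oscillate by less than $\eta$ (continuity of $g\circ\iota^{-1}$ on $\iota(E)$ is exactly what strong separation buys, since it makes $\iota^{-1}$ continuous even though the elements of $\mathcal M$ are only Borel), let $U$ be their union, bound the error on $\iota^{-1}(U)$ by $2\eta\,\mu_t(E)$ instead of $2\|g\|_\infty$, and apply the closed-set portmanteau bound $\limsup_t\widehat\mu_t(\widehat E\setminus U)\le\widehat\mu(\widehat E\setminus U)\le\varepsilon$; only then does the $\varepsilon$-chain close.

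There are two further gaps. First, the choice of a compact $K_\varepsilon\subset\iota(E)$ with $\widehat\mu(\widehat E\setminus K_\varepsilon)<\varepsilon$ does not follow from ``inner regularity of $\widehat\mu$ on $\widehat E$'': $\iota(E)$ need not be Borel in $\widehat E$ (the $g_k$ are only Borel), it merely has full outer $\widehat\mu$-measure, and a set of full outer measure can contain only null compact subsets. To get such a $K_\varepsilon$ you need something like tightness of $\mu$ on $E$ plus Lusin's theorem to make $\iota$ continuous off a set of small $\mu$-measure; this is fine for $E=\RR^d$, which is all the paper uses, but it is not automatic for a topological space with a countable base as in the statement, so either add hypotheses or argue, as in \cite{MR2673979}, without leaving $E$. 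Second, the opening ``WLOG $f\ge0$ by treating $f^{+}$ and $f^{-}$ separately'' is not legitimate: the hypothesis gives $\nu_t(gf)\to\nu(gf)$ only for the combined $f$, not for $\nu_t(gf^{\pm})$ separately (and without $f\ge0$ you also lose the mass bound $\sup_t\mu_t(E)<\infty$ used in your Stone--Weierstrass step); this is harmless in the paper's application, where $f=f_a\ge0$, but it should be stated as an assumption rather than a reduction. Minor: $\mathcal M\subset B(E)$ need not consist of continuous functions, so your claim that $\mathcal A\subset C_b(E)$ (and that $\mathcal A$ is countable) is unwarranted, though nothing in the corrected argument needs it.
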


	We are now ready to prove Theorem \ref{thm:SLLN}.
	\begin{proof}[Proof of Theorem \ref{thm:SLLN}]
		Here is an outline of our strategy: in the first two steps, we show the strong law for a fixed but arbitrary  function of a specific form. In the final step, we obtain the full statement of Theorem \ref{thm:SLLN} by an application of Lemma \ref{countable}.

	Let $\{t_n\}$ be the sequence in \ref{con:sl.tnmax}. Let $D$ be a measurable set of $\RR^d$ whose boundary has measure 0. If $\ell$ is infinite, we put $f_a(x)=e^{-\frac{|x|}{a}}$ for each $a\in(0,\infty)$. If $\ell$ is finite, $f_a\equiv1$ for all $a>0$. We impose that $1_D f_a$ belongs to $L^1(\RR^d)\cap L^2(\RR^d)$. In particular, if $\ell$ is finite, $D$ must have finite measure, otherwise, $D$ can have infinite measure.  As our first goal, we will show that
	\begin{equation}\label{tmp:sl1D}
		\lim_{t\to\infty}e^{-\beta t}\sigma^d(t)X_t(1_D f_{a})=e^{-\beta r}F \mathcal T (1_D f_a)(U_\infty \xi_0)\quad\PP_{\xi[0,r]}- \mathrm{a.s.}\,,
	\end{equation}
	where $\mathcal T$ is defined in \eqref{def:Tf}.
	We note that Theorem \ref{thm:lattice} already implies the $\PP_{\xi[0,r]}$-a.s. convergence along the sequence $\{t_n\}$.

		\textbf{Step 1:} Let us show the lower bound of \eqref{tmp:sl1D}. Let $\epsilon$ be a fixed positive number. We denote $D_ \epsilon=\{z\in D:\mathrm{dist}(z,\partial D)>\epsilon\}$ and
		\begin{equation}\label{def:Analpha}
			A^\alpha_n=\{\sup_{u,v\in[t_n,t_{n+1}]}|\xi^\alpha_u- \xi	^\alpha_{v}|\ge \epsilon\} \,.
		\end{equation}
		From \eqref{def:Xt}, under $\PP_{\xi[0,r]}$, we can write
		\begin{equation}\label{tmp:XtD}
			X_t(1_Df_a)=\sum_{\alpha\in\cii^l}1_{[b^\alpha,d_\alpha)}(t)1_D(\xi_t^\alpha) f_a(\xi^\alpha_t)	.
		\end{equation} 
		For every $t\in[t_n,t_{n+1}]$, we have
		\begin{equation*}
			1_{[b^\alpha,d_\alpha)}(t)\ge 1_{[b^\alpha,d_\alpha)}(t_n)-1_{[b^\alpha,d_\alpha)}(t_n)1_{[t_n,t_{n+1}]}(d_\alpha)
		\end{equation*}
and
\begin{equation*}
	e^{\frac{\epsilon}{a}}1_D(\xi^\alpha_t)f_a(\xi^\alpha_t) \ge  1_{D_ \epsilon}(\xi^\alpha_{t_n})f_a(\xi^\alpha_{t_n}) -1_{A^\alpha_n}1_{D_ \epsilon}(\xi^\alpha_{t_n})f_a(\xi^\alpha_{t_n})\,.
\end{equation*}
We note that right-hand sides of the previous estimates are non-negative. Substitute these estimates into \eqref{tmp:XtD}, we get
		\begin{align*}
			e^{\frac{\epsilon}{a}}X_t(1_D f_a)\ge X_{t_n}(1_{D_ \epsilon}f_a)&- \sum_{\alpha\in\cii^l} 1_{[b^\alpha,d_\alpha)}(t_n)1_{A_n^\alpha}1_{D_ \epsilon}(\xi^\alpha_{t_n})f_a(\xi^\alpha_{t_n})
			\\&-\sum_{\alpha\in\cii^l}1_{D_ \epsilon}(\xi^\alpha_{t_n})f_a(\xi^\alpha_{t_n})1_{[b^\alpha,d_\alpha)}(t_n)1_{[t_n,t_{n+1}]}(d_\alpha)\,.
		\end{align*}
		Bounding $f_a$ and indicator functions by 1 in the last two terms, it follows that for every $t\in[t_n,t_{n+1}]$,
		\begin{equation}\label{tmp:lowX}
			e^{\frac{\epsilon}{a}}X_t(1_Df_a)\ge X_{t_n}(1_{D_ \epsilon}f_a)-Y_n-Z_n\,,
		\end{equation}
		where
		\begin{equation*}
			Y_n=\sum_{\alpha\in\cii^l} 1_{[b^\alpha,d_\alpha)}(t_n)1_{A_n^\alpha}=\sum_{\alpha\in\cii_{t_n}} 1_{A_n^\alpha}
		\end{equation*}
		and
		\begin{equation*}
			Z_n=\sum_{\alpha\in\cii}1_{D_ \epsilon}(\xi^\alpha_{t_n})f_a(\xi^\alpha_{t_n})1_{[b^\alpha,d_\alpha)}(t_n)1_{[t_n,t_{n+1}]}(d_\alpha)=\sum_{\alpha\in\cii_{t_n} }1_{D_ \epsilon}(\xi^\alpha_{t_n})f_a(\xi^\alpha_{t_n})1_{[t_n,t_{n+1}]}(d_\alpha)\,.
		\end{equation*}
		We show now that $e^{-\beta t_n}\sigma^d(t_n) Y_n$ and $e^{-\beta t_n}\sigma^d(t_n)Z_n$ converge to 0, $\PP_{\xi[0,r]}$-almost surely. From Lemma \ref{lem:firstmom}, we have
		\begin{align*}
			\EE_{\xi[0,r]} Y_n 
			=e^{\beta (t_n-r)}\PP_{\xi[0,r]}\lt(\sup_{u,v\in[t_n,t_{n+1}]}|\xi_u- \xi_v|\ge \epsilon\rt)\,.
		\end{align*}
		Hence, 
		\begin{align*}
			\sum_{n=1}^\infty e^{-\beta t_n}\sigma^d(t_n) \EE_{\xi[0,r]} Y_n\lesssim\sum_{n=1}^\infty \sigma^d(t_n)\PP_{\xi[0,r]} \lt(\sup_{u,v\in[t_n,t_{n+1}]}|\xi_u-\xi_v|\ge \epsilon\rt)\,,
		\end{align*}
		which is finite by condition \ref{con:sl.tnmax}. Therefore, by Borel-Cantelli lemma, we get
		\begin{equation*}
			\lim_{n\to\infty}e^{-\beta t_n}\sigma^d(t_n) Y_n=0\,.
		\end{equation*}

		The term $Z_n$ is a little more delicate. 
		We note that,
		\begin{align*}
		 	\EE_{\xi[0,r]}(Z_n|\cff_{t_n})=\sum_{\alpha\in\cii_{t_n}}1_{D_ \epsilon}(\xi_{t_n}^\alpha)f_a(\xi^\alpha_{t_n})\EE_{\xi[0,r]}(1_{[t_n,t_{n+1}]}(d_ \alpha)|\cff_{t_n})
		\end{align*} 
		and for each $\alpha\in\cii_{t_n}$, we have
		\begin{align*}
			\EE_{\xi[0,r]}(1_{[t_n,t_{n+1}]}(d_ \alpha)|\cff_{t_n})
			&=\PP(t_n\le L^\alpha +b^\alpha\le t_{n+1}|L^\alpha+b^\alpha\ge t_n, b^\alpha)=1-e^{-V(t_{n+1}-t_n)}\,.
		\end{align*}
		To simplify our notation, we put $c_n=1-e^{-V(t_{n+1}-t_n)}$. It follows that
		\begin{align}\label{tmp:znfn}
		 	\EE_{\xi[0,r]}(Z_n|\cff_{t_n})
		 	=c_n X_{t_n}(1_{D_ \epsilon}f_a)
		\end{align} 
		and hence,
		\begin{align*}
			&|Z_n-\EE_{\xi[0,r]}( Z_n|\cff_{t_n})|^2
			\\&=\lt(\sum_{\alpha\in\cii_{t_n}}1_{D_ \epsilon}(\xi^\alpha_{t_n})f_a(\xi^\alpha_{t_n})
			(1_{[t_n,t_{n+1}]}(d_ \alpha)-c_n) \rt)^2
			\\&=\sum_{\alpha\in\cii_{t_n}}1_{D_ \epsilon}(\xi^\alpha_{t_n})f_a(\xi^\alpha_{t_n})
			(1_{[t_n,t_{n+1}]}(d_ \alpha)-c_n)^2
			\\&\quad+\sum_{\alpha,\alpha'\in\cii_{t_n};\alpha\neq \alpha'}1_{D_ \epsilon}(\xi^\alpha_{t_n})f_a(\xi^\alpha_{t_n})1_{D_ \epsilon}(\xi^{\alpha'}_{t_n})f_a(\xi^{\alpha'}_{t_n})(1_{[t_n,t_{n+1}]}(d_ \alpha)-c_n)(1_{[t_n,t_{n+1}]}(d_ {\alpha'})-c_n)\,.
		\end{align*}
		Conditioning on $\cff_{t_n}$, $d_ \alpha$ and $d_ {\alpha'}$ are independent if $\alpha,\alpha'\in\cii_{t_n}$ and $\alpha\neq {\alpha'}$. Hence the second sum above vanishes upon taking expectation. It follows that
		\begin{equation*}
			\EE_{\xi[0,r]} 	|Z_n-\EE_{\xi[0,r]}( Z_n|\cff_{t_n})|^2\le  \EE_{\xi[0,r]}\sum_{\alpha\in\cii_{t_n}}1_{D_ \epsilon}(\xi^\alpha_{t_n})f_a(\xi^\alpha_{t_n})(1_{[t_n,t_{n+1}]}(d_ \alpha)-c_n)^2
			\le 4\EE_{\xi[0,r]}X_{t_n}(1_{D_ \epsilon}f_a)\,.
		\end{equation*}
		Applying Lemma \ref{lem:firstmom}, we have
		\begin{align*}
			\EE_{\xi[0,r]} 	|Z_n-\EE_{\xi[0,r]}( Z_n|\cff_{t_n})|^2\lesssim e^{\beta t_n}\EE_{\xi[0,r]}1_{D_ \epsilon}(\xi_{t_n})f_a(\xi_{t_n})\lesssim e^{\beta t_n}\sigma_2^{-d}(t_n,r)\,.
		\end{align*}
		So
		\begin{equation*}
		 	\sum_{n=1}^\infty e^{-2\beta t_n}\sigma^{2d}(t_n)\EE_{\xi[0,r]} 	|Z_n-\EE_{\xi[0,r]}( Z_n|\cff_{t_n})|^2\lesssim \sum_{n=1}^\infty\frac{\sigma^d(t_n)}{\sigma_2^d(t_n,r)}  e^{-\beta t_n}\sigma^d(t_n)\,,
		 \end{equation*} 
		which is  finite by \ref{con:s1s} and \ref{con:sl.tnmax}.
		Applying Borel-Cantelli lemma, we get
		\begin{align*}
			\lim_{n\to\infty}e^{-\beta t_n}\sigma^d(t_n)Z_n=\lim_{n\to\infty}e^{-\beta t_n}\sigma^d(t_n)\EE_{\xi[0,r]}(Z_n|\cff_{t_n})\,.
		\end{align*}
		On the other hand, $t_{n+1}-t_n\to0$ as $n\to\infty$, by \eqref{tmp:znfn} and Theorem \ref{thm:lattice}, it follows that
		\begin{align*}
			\lim_{n\to\infty}e^{-\beta t_n}\sigma^d(t_n) \EE_{\xi[0,r]}(Z_n|\cff_{t_n})
			=\lim_{n\to\infty}e^{-\beta t_n}\sigma^d(t_n)X_{t_n}(1_{D_ \epsilon}f_a)(1-e^{-V(t_{n+1}-t_n)})=0
		\end{align*}
		with probability one.
		Upon combining these limit identities together, we obtain
		\begin{equation*}
			\lim_{n\to\infty} e^{-\beta t_n}\sigma^d(t_n)Z_n=0\quad\PP_{\xi[0,r]} -\mathrm{a.s.}
		\end{equation*}
		Together with Theorem \ref{thm:lattice}, condition \ref{con:sl.tnmax} and the estimate \eqref{tmp:lowX}, we get
		\begin{equation*}
			e^{\frac{\epsilon}{a}} \liminf_{t\to\infty}e^{-\beta t}\sigma^d(t)X_t(1_Df_a)\ge e^{-\beta r}F \mathcal T(1_{D_ \epsilon}f_a)(U_\infty \xi_0)\quad \PP_{\xi[0,r]} - \mathrm{a.s.}
		\end{equation*}
		We now let $\epsilon\downarrow0$ to achieve the lower bound of \eqref{tmp:sl1D}.

		\textbf{Step 2:} The upper bound of \eqref{tmp:sl1D} is more involved. Using the inequality
	\begin{equation*}
	 1_{[b^\alpha,d_ \alpha)}(t)\le 1_{[b^\alpha,d_ \alpha)}(t_{n+1})+1_{[b^\alpha,d_ \alpha)}(t)1_{[t_n,t_{n+1}]}(d_ \alpha)\,\ \forall t\in[t_n,t_{n+1}],
	\end{equation*} 
	we see from \eqref{tmp:XtD} that
	\begin{align}\label{tmp:upXIII}
			X_t(1_Df_a)&\le \EE_{\xi[0,r]} \lt(\sum_{\alpha\in\cii}1_{[b^\alpha,d_ \alpha)}(t_{n+1})1_{D}(\xi^\alpha_t)f_a(\xi^\alpha_t)\Big|\cff_t\rt)
			\nonumber\\&\quad+\EE_{\xi[0,r]} \lt(\sum_{\alpha\in\cii} 1_{[b^\alpha,d_ \alpha)}(t)1_{[t_n,t_{n+1}]}(d_ \alpha)1_D(\xi^\alpha_t)f_a(\xi^\alpha_t) \Big|\cff_t\rt)\,.
		\end{align}
		We denote by $I_t$ and $II_t$ the first and second conditional expectation in the right-hand side above.
		For each $\epsilon>0$, let $D^\epsilon=\{z\in \RR^d:\mathrm{dist}(z,\partial D)<\epsilon\}$. Let us recall that $A_n^\alpha$ is defined in \eqref{def:Analpha}. To handle $I_t$, we employ the inequality
		\begin{equation*}
			1_{D}(\xi^\alpha_t)f_a(\xi^\alpha_t)\le e^{\frac{\epsilon}{a}} 1_{D^\epsilon}(\xi^\alpha_{t_{n+1}})f_a(\xi^\alpha_{t_{n+1}}) +1_{A_n^\alpha}
		\end{equation*}
		and arrive at
		\begin{align}\label{tmp:I}
			I_t&\le e^{\frac{\epsilon}{a}}\EE_{\xi[0,r]} (X_{t_{n+1}}(1_{D^\epsilon}f_a)|\cff_t)+ \EE_{\xi[0,r]} \lt(\sum_{\alpha\in\cii}1_{[b^\alpha,d_ \alpha)}(t_{n+1})1_{A^\alpha_n}\Big|\cff_t\rt )
			\nonumber\\&\le e^{\frac{\epsilon}{a}} X^*_n+Y^*_n\,,
		\end{align}
		where
		\begin{equation*}
			X^*_n=\sup_{s\in[t_n,t_{n+1}]}\EE_{\xi[0,r]} (X_{t_{n+1}}(1_{D^\epsilon}f_a)|\cff_s)
		\end{equation*}
		and
		\begin{equation*}
			Y^*_n=\sup_{s\in[t_n,t_{n+1}]}\EE_{\xi[0,r]} \lt(\sum_{\alpha\in\cii}1_{[b^\alpha,d_ \alpha)}(t_{n+1})1_{A^\alpha_n}\Big|\cff_s\rt ).
		\end{equation*}
		For $II_t$, we note that on the event $\{b^\alpha\le t<d_\alpha\}$
		\begin{align*}
			\EE_{\xi[0,r]} (1_{[t_n,t_{n+1}]}(d_ \alpha)|\cff_t )&=\PP(t_n\le L^\alpha+b^\alpha\le t_{n+1}|L^\alpha+b^\alpha> t, b^\alpha)
			\\&\le1-e^{-V(t_{n+1}-t_n)}\,.
		\end{align*}
		Hence,
		\begin{align}\label{tmp:II}
			II_t&=\sum_{\alpha\in\cii}1_{[b^\alpha,d_ \alpha)}(t)1_D(\xi^\alpha_t)f_a(\xi^\alpha_t) \EE_{\xi[0,r]} (1_{[t_n,t_{n+1}]}(d_ \alpha)|\cff_t)
			\nonumber\\&\le(1-e^{- V(t_{n+1}-t_n)})X_t(1_Df_a)\,.
		\end{align}
		Combining \eqref{tmp:upXIII} with \eqref{tmp:I} and \eqref{tmp:II} yields
		\begin{align}\label{tmp:XXY}
			X_t(1_D)\le e^{V(t_{n+1}-t_n)} (e^{\frac{\epsilon}{a}}X^*_n+ Y^*_n)\,.
		\end{align}
		We observe that the process $[t_n,t_{n+1}]\ni s\mapsto \EE_{\xi[0,r]} (X_{t_{n+1}}(1_{D^\epsilon}f_a)|\cff_s)$ is a martingale and has a right-continuous modification (recall that $\{\cff_t\}$ is a right-continuous filtration). In addition, since $b(t_n)<t_n$, the random variable $K_n:=\EE_{\xi[0,r]} (X_{t_{n+1}}(1_{D^ \epsilon}f_a) |\cff_{b(t_{n})})$ is $\cff_{t_n}$-measurable. Note that by Jensen's inequality,
\begin{equation*}
 	\EE_{\xi[0,r]}K_n^2\le \EE_{\xi[0,r]}\lt[(X_{t_{n+1}}(1_{D^ \epsilon}f_a))^2\rt]\,.
\end{equation*} 
Using Doob's maximal inequality (see \cite{MR1725357}*{Thm. 1.7, Chap. II} or \cite{MR1796539}*{p. 177}) and Proposition \ref{prop:PXtFt0}, we see that
\begin{align*}
	\EE_{\xi[0,r]} (X^*_n-K_n)^2
	&\le \EE_{\xi[0,r]} \lt[\sup_{s\in[t_n,t_{n+1}]}\EE_{\xi[0,r]} \lt(|X_{t_{n+1}}(1_{D^\epsilon}f_a)- K_n|\big|\cff_s\rt)^2 \rt]
	\\&\le 4\EE_{\xi[0,r]}  (X_{t_{n+1}}(1_{D^\epsilon}f_a)-K_n)^2
			\\&\lesssim \EE_{\xi[0,r]}  \lt(X_{t_{n+1}}(1_{D^\epsilon}f_a)-\EE_{\xi[0,r]}\lt(X_{t_{n+1}}(1_{D^\epsilon}f_a)|\cff_{b(t_n)}\rt) \rt)^2
			\\&\lesssim e^{2 \beta t_{n+1}-\beta b(t_{n})}\sigma^{-d}_2(t_{n+1},r)\,.
		\end{align*}
		Thus, 
		\begin{align*}
			\sum_{n=1}^\infty e^{-2\beta t_{n+1}}\sigma^{2d}(t_{n+1}) \EE_{\xi[0,r]} (X^*_n-K_n )^2\lesssim \sum_{n=1}^\infty \frac{\sigma^d(t_{n+1})}{\sigma_2^2(t_{n+1},r)} e^{-\beta b(t_{n})}\sigma^d(t_{n+1})\,,
		\end{align*}
		which is finite due \ref{con:s1s} and \ref{con:sl.tnmax}. By Borel-Cantelli lemma and Lemma \ref{lem:SLLN2}, we conclude that
		\begin{align*}
			\lim_{n\to\infty}e^{-\beta t_{n+1}}\sigma^d(t_{n+1}) X^*_n
			&=\lim_{n\to\infty}e^{-\beta t_{n+1}}\sigma^d(t_{n+1}) \EE_{\xi[0,r]} (X_{t_{n+1}}(1_{D^ \epsilon}f_a) |\cff_{b(t_{n})})
			\\&=e^{-\beta r}F \mathcal T(1_{D^\epsilon}f_a) (U_\infty \xi_0)  \quad \PP_{\xi[0,r]} - \mathrm{a.s.}
		\end{align*}
$Y^*_n$ can be treated similarly.
Using the right continuity of $\{\cff_t\}$ and Doob's maximal inequality again (see \cite{MR1725357}*{Thm. 1.7, Chap. II}
) as well as Lemma \ref{lem:firstmom}, we see that for every fixed $\rho>0$,
\begin{align*}
	\EE_{\xi[0,r]} (e^{-\beta t_{n+1}}\sigma^d(t_{n+1}) Y_n^*\ge \rho)
	&\lesssim \rho^{-1}e^{-\beta t_{n+1}}\sigma^d(t_{n+1}) \EE_{\xi[0,r]}\sum_{\alpha\in\cii}1_{[b^\alpha,d_ \alpha)}(t_{n+1})1_{A^\alpha_n} 
	\\&\lesssim \rho^{-1} \sigma^d(t_{n+1})\PP_{\xi[0,r]}\lt(\sup_{u,v\in[t_n,t_{n+1}]}|\xi_u- \xi_v|\ge \epsilon \rt).
\end{align*}
Thanks to \ref{con:sl.tnmax}, we can apply Borel-Cantelli lemma to conclude that
\begin{equation*}
	\lim_{n\to\infty}e^{-\beta t_{n+1}}\sigma^d(t_{n+1})Y^*_n=0\quad\PP_{\xi[0,r]} -\mathrm{a.s.}
\end{equation*}
		Now these two newly established limits together with \eqref{tmp:XXY} and \eqref{limtn} imply
		\begin{equation*}
			\limsup_{t\to\infty}e^{-\beta t}\sigma^d(t) X_t(1_Df_a)\le e^{\frac{\epsilon}{a}}e^{-\beta r}F \mathcal T(1_{D^\epsilon}f_a)(U_\infty \xi_0)\,,
		\end{equation*}
		which upon sending $\epsilon$ to 0 yields the upper bound of \eqref{tmp:sl1D}. 

		\textbf{Step 3:} Depending on the finiteness of $\ell$, we have two separate cases. We only consider here the case when $\ell$ is infinite. The remaining case is treated similarly and omitted.  Note that $C_c(\mathbb R^d)$ is an algebra in $B(\RR^d)$ that strongly separates points. Hence, by \cite{MR2673979}*{Lemma 2}, there is a countable subcollection $\mathcal M\subset C_c(\RR^d)$ that strongly separates points and is closed under multiplication.
		 We show below that with $\PP_{\xi[0,r]}$-probability one
		\begin{equation}\label{eqn:step3}
			\lim_{t\to\infty}e^{-\beta t}\sigma^d(t)X_t(gf_n)=e^{-\beta r}F\mathcal T(gf_n)(U_\infty \xi_0)\,.
		\end{equation}
		for all $g\in \mathcal M\cup{1}$ and $n\in\NN$.
		Fix $n\in\NN$, for each $g\in \mathcal M$, there exist two sequences of step functions $\{\overline g_m\}$ and $\{\underline {g}_m\}$ which converge to $g$ pointwise from below and above. In particular, we have
		\begin{equation*}
			\underline{g}_m(x)f_n(x)\le g(x) f_n(x)\le \overline g_m(x)f_n(x)
		\end{equation*}
		for all $x\in\RR^d$.  It follows from Steps 1 and 2 that with $\PP_{\xi[0,r]}$-probability one,
		\begin{align*}
			\limsup_{t\to\infty}e^{-\beta t}\sigma^d(t)X_t(g f_n)\le e^{-\beta r}F \mathcal T(\overline g_m f_n)(U_\infty \xi_0)
		\end{align*}
		and 
		\begin{align*}
			\liminf_{t\to\infty}e^{-\beta t}\sigma^d(t)X_t(gf_n)\ge e^{-\beta r}F \mathcal T(\underline g_mf_n)(U_\infty \xi_0)
		\end{align*}
		for all $m\in\NN$. 
		Now let $m\to\infty$ we obtain \eqref{eqn:step3} for fixed $g$ and $f_n$. If $g\equiv 1$, \eqref{eqn:step3} follows directly from Steps 1 and 2.
		Since $\mathcal M$ and $\{f_n\}$ are countable, one can find an event $\Omega^*$ such that $\PP_{\xi[0,r]}(\Omega^*)=1$ and on $\Omega^*$, \eqref{eqn:step3} holds for all $g\in \mathcal M\cup\{1\}$ and all $n\in\NN$.

		Hereafter, we fix a realization in $\Omega^*$. Let $g$ be a continuous function on $\RR^d$ and $n\in\NN$ be such that $\sup_{x\in\RR^d}e^{\frac{|x|}n} |g(x)|$ is finite.  On the event that $F>0$, we can take $f\equiv  f_n$  in Lemma \ref{countable} to get the convergence of $e^{-\beta t}\sigma^d(t)X_t(g)$ to the desired limit. On the event that $F=0$, we note that $|X_t(g)|\le\sup_{x\in\RR^d}e^{\frac{|x|}n} |g(x)|X_t(f_n)$ and hence, it follows from \eqref{eqn:step3} that $e^{-\beta t}\sigma^d(t)X_t(g)$ converges to 0. Hence, \eqref{eqn:slln} is proved and the proof of Theorem \ref{thm:SLLN} is complete.
	\end{proof}

\section{Examples} 
\label{sec:examples}
	We present in the current section two classes of spatial motions, each of which exhibits different long term dynamic from the other. To be precise, we denote by $\{B^H_t,t\in\RR\}$ a two-sided, normalized fractional Brownian motion with Hurst parameter $H\in(0,1)$. In particular, $B^H$ satisfies the following properties.
	\begin{enumerate}[(i)]
		\item $B^H_0=0$ and $\EE B^H_t=0$ for all $t\in\RR$.
		\item $B^H$ has homogeneous increments, i.e., $B^H_{t+s}-B^H_s$ has the same law of $B^H_t$ for $s,t\in\RR$.
		\item $B^H$ is a Gaussian process and $\EE |B^H_t|^2=t^{2H}$ for all $t\in\RR$.
		\item $B^H$ has continuous trajectories.
	\end{enumerate}
	The law of $B^H$ and its corresponding expectation are denoted by $\PP$ and $\EE$ respectively.

	When $H=\frac12$, the fractional Brownian motion $B^{\frac12}$ coincides with the standard Brownian motion. Fractional Brownian motions with $H>\frac12$ have long range dependence, that is
	\begin{equation*}
	 	\sum_{n=1}^\infty \EE [B^H_t(B^H_{(n+1)t}-B^H_{nt})]=\infty\,.
	\end{equation*} 
	Unless $H= \frac12$, the fractional Brownian motion is neither a Markov process nor a semimartingale (cf. \cite{BiaginiHuOksendalZhang08}). It is, however, shown in \cite{NorrosValkeilaVirtamo99} that $B^H$ is of the Volterra form \eqref{rep:volterra}.

	We briefly recall a construction of stochastic integration $\int_\RR f(s) dB_s^H$ where $f$ is deterministic and belongs to some suitable function spaces. For every function $f$ on $\RR$, we denote by $\hat f$ its Fourier transform with the following normalization
	\begin{equation*}
		\hat f(x)=\int_{\RR}e^{\sqrt{-1}s x}f(s)ds\,,
	\end{equation*}
	where $\sqrt{-1}$ is the imaginary unit.
	If $f$ is an elementary (or step) function given by
	\begin{equation*}
		f(u)=\sum_{k=1}^nf_k 1_{[u_k,u_{k+1})}(u),\quad u\in\RR\,,
	\end{equation*}
	then we define $\int_\RR f(s)dB^H_s$ as the Riemann sum
	\begin{equation*}
		\sum_{k=1}^n f_k (B^H_{u_k}-B^H_{u_{k+1}})\,.
	\end{equation*}
	It is shown in \cite[page 257] {MR1790083} that for every $H\in(0,1)$ and elementary functions $f,g$,
	\begin{equation}\label{id:iso}
		\EE\lt[\int_\RR f(s)dB_s^H\cdot \int_\RR g(s)dB_s^H\rt]=c_1(H)\int_\RR \hat f(x)\overline{\hat g(x)}|x|^{1-2H}dx
	\end{equation}
	where
	\begin{equation*}
		c_1(H):=\lt(2\int_{\RR}\frac{1-\cos x}{|x|^{2H+1}}dx\rt)^{-1}= \frac {2 \pi}{\Gamma(2 H+1)\sin(\pi H)}\,.
	\end{equation*}
	As in \cite{MR1790083}, using \eqref{id:iso} and a denseness argument, the stochastic integration $\int_\RR f(s)dB_s^H$ can be extended to all integrands $f$ in $L^2(\RR)$ such that
	\begin{equation*}
		\int_\RR |\hat f(x)|^2|x|^{1-2H}dx<\infty\,.
	\end{equation*}
	Furthermore, if $f=1_{[a,b]}g$ with $g$ being a H\"older continuous function on $[a,b]$ of order $\gamma$, $\gamma+H>1$, then the integration $\int_\RR f(s)dB_s^H$ coincides with the Young integral (\cite{MR1555421}) $\int_a^b g(s)dB^H_s$.

	In what follows, we restrict the fractional Brownian motion $B^H$ on non-negative time domain and consider the following linear stochastic differential equation driven by $\{B^H_t,t\ge0\}$
	\begin{equation}\label{eqn:dxi}
		d \xi_t= \mu \xi_tdt+ \lambda dB^H_t\,,\quad t\ge0
	\end{equation}
	where $\mu\in\RR$ and $\lambda>0$. Given an initial datum, equation \eqref{eqn:dxi} has a unique solution
	\begin{equation}\label{def:xi}
		\xi_t=e^{\mu t}\xi_0+\lambda\int_0^t e^{\mu(t-s)}dB^H_s\ .
	\end{equation}
	Long term dynamics of \eqref{def:xi} exhibits three different behaviors depending on the sign of the parameter $\mu$. More precisely, if $\mu=0$, $\xi$ is nothing but a scalar multiple of the fractional Brownian motion $B^H$, whose variance grows to infinity in the long term. If $\mu<0$, $\xi$ is the so-called fractional Ornstein-Uhlenbeck process, whose variance converges to a finite limit. If $\mu>0$, the variance of $\xi$ grows exponentially in long term. 
	In this case, the process $\xi$ does not satisfy conditions \ref{con:s1s} nor \ref{con:ws1s}. Hence, in what follows, we mostly consider the cases when $\mu\le 0$.

	We show below that $\xi$ is indeed a Volterra-Gaussian process. Let us first define some notation. We denote
	\begin{equation*}
		c_2(H)=\lt(\frac{2H \Gamma(\frac32-H)}{\Gamma(H+\frac12)\Gamma(2-2H)}\rt)^{1/2}\,,
	\end{equation*}
	where $\Gamma$ is the Gamma function. 
	For each $H\in(0,\frac12)\cup(\frac12,1)$, let $K_H^\mu:\{(t,s)\in\RR^2:t>s\}\to\RR$ be a kernel defined by
		\begin{multline}\label{kernel1}
			K_H^\mu(t,s)=c_2(H)\lt[\lt(\frac ts\rt)^{H-1/2}(t-s)^{H-1/2}\rt.
			\\\lt.-s^{1/2-H}\int_s^t e^{\mu(t-u)}(H-\frac12-\mu u)u^{H-3/2} (u-s)^{H-1/2}  du \rt].
		\end{multline}
	For $H>\frac12$, we may integrate by parts
	\begin{align*}
		(H-\frac12) \int_s^t e^{\mu(t-u)}u^{H-3/2}(u-s)^{H-1/2}du
		&=(t-s)^{H-1/2}t^{H-1/2}
		\\&\quad-\int_s^t \frac{d}{du}\lt(e^{\mu(t-u)}(u-s)^{H-1/2} \rt)u^{H-1/2}du
	\end{align*} to obtain a simpler expression
		\begin{equation}\label{kernel2}
			K_H^\mu(t,s)=(H-\frac12) c_2(H) s^{1/2-H} \int_s^t e^{\mu(t-u)}u^{H-1/2}(u-s)^{H-3/2}du.
		\end{equation}
	For $H=\frac12$, we set $K_{1/2}^\mu(t,s) =e^{\mu(t-s)}$.
	The following scaling property of $K^\mu_H$ will be useful later. 
	\begin{lemma}
		For every $\kappa>0$ and $t>s>0$, we have
		\begin{equation}\label{id:Kscale}
			K^\mu_H(\kappa t,\kappa s)=\kappa^{H-\frac12} K^{\mu \kappa}_H(t,s)\,.
		\end{equation}
	\end{lemma}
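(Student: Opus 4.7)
The statement is a direct consequence of the change of variables $u = \kappa v$ in the integral defining $K_H^\mu$, together with a careful accounting of how $\kappa$ factors out. I would split the argument into the three cases $H=\tfrac12$, $H>\tfrac12$, and $H<\tfrac12$, corresponding to the three definitions of the kernel.

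The case $H=\tfrac12$ is immediate, since $K_{1/2}^\mu(\kappa t,\kappa s)=e^{\mu\kappa(t-s)}=K_{1/2}^{\mu\kappa}(t,s)$ and $\kappa^{H-1/2}=1$. For $H>\tfrac12$ I would use the compact representation \eqref{kernel2}. Writing
\[
K_H^\mu(\kappa t,\kappa s)=(H-\tfrac12)c_2(H)(\kappa s)^{1/2-H}\int_{\kappa s}^{\kappa t}e^{\mu(\kappa t-u)}u^{H-1/2}(u-\kappa s)^{H-3/2}\,du,
\]
substituting $u=\kappa v$ introduces a factor $\kappa^{1/2-H}$ from the prefactor, $\kappa^{H-1/2}$ from $u^{H-1/2}$, $\kappa^{H-3/2}$ from $(u-\kappa s)^{H-3/2}$, and $\kappa$ from $du$. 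These combine to $\kappa^{H-1/2}$. The exponential becomes $e^{\mu\kappa(t-v)}$, which is precisely the exponential appearing in $K_H^{\mu\kappa}(t,v)$, so the remaining integral matches $K_H^{\mu\kappa}(t,s)$ exactly.

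For $H<\tfrac12$ I would work from \eqref{kernel1}. The first summand $c_2(H)(t/s)^{H-1/2}(t-s)^{H-1/2}$ is independent of $\mu$ and scales cleanly as $\kappa^{H-1/2}$ times itself, since the ratio $(\kappa t/\kappa s)^{H-1/2}$ is scale-invariant. For the integral term the same substitution $u=\kappa v$ as above works, but one must track the extra factor $(H-\tfrac12-\mu u)$, which under the substitution becomes $(H-\tfrac12-\mu\kappa v)$. This is exactly the factor present in $K_H^{\mu\kappa}(t,s)$, so the $\mu$ in the kernel indeed morphs into $\mu\kappa$, and the remaining $\kappa$-powers from the measure, from $s^{1/2-H}$, from $u^{H-3/2}$, from $(u-s)^{H-1/2}$, and from $du$ again collapse to $\kappa^{H-1/2}$.

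No real obstacle is expected: the entire content of the lemma lies in verifying the arithmetic of the exponents of $\kappa$ after the change of variables, and in noticing that the factor $\mu u$ in \eqref{kernel1} transforms into $\mu\kappa v$, which is the only reason the right-hand side parameter is $\mu\kappa$ rather than $\mu$.
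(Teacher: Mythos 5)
Your change-of-variables argument is correct: the exponent bookkeeping in both \eqref{kernel1} and \eqref{kernel2} yields the factor $\kappa^{H-\frac12}$, and the term $\mu u \mapsto \mu\kappa v$ is exactly why the parameter becomes $\mu\kappa$. This is the same direct calculus verification the paper has in mind (it simply omits the details), so nothing further is needed.
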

	\begin{proof}
		This is merely a calculus exercise given \eqref{kernel1} and \eqref{kernel2}, we skip the details
	\end{proof}
	\begin{proposition}\label{prop:xiVol}
		$\xi$ is a canonical Volterra-Gaussian of the form \eqref{rep:volterra} with $K\equiv K^\mu_H$ and $U_tx=e^{\mu t}x$. More precisely, there exists a Brownian motion $\{W_t,t\ge0\}$ such that
		\begin{equation}\label{eqn:Volxi}
			\xi_t=e^{\mu t}\xi_0+\lambda\int_0^tK_H^\mu(t,u)dW_u\quad\forall t\ge0
		\end{equation}
		and $W$ and $\xi$ generate the same filtration $\cgg=\{\cgg_t,t\ge0\}$.
	\end{proposition}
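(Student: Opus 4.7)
My plan is to build the Volterra representation for $\xi$ out of the Molchan–Golosov representation for the driving fractional Brownian motion, and then obtain the canonical property by transferring it through invertible transformations.

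First, I would invoke the classical Molchan–Golosov result (as in Norros–Valkeila–Virtamo \cite{NorrosValkeilaVirtamo99} or Decreusefond–Üstünel) that produces a standard Brownian motion $W$ on the same probability space such that
\begin{equation*}
B^H_t=\int_0^t K^0_H(t,s)\,dW_s,\qquad t\ge 0,
\end{equation*}
where $K^0_H$ is exactly the kernel \eqref{kernel1} with $\mu=0$ (this matches the standard Molchan–Golosov kernel up to the constant $c_2(H)$), and such that the raw filtrations of $W$ and $B^H$ coincide. For $H=\tfrac12$ there is nothing to do: $B^{1/2}=W$ and $K^\mu_{1/2}(t,s)=e^{\mu(t-s)}$, so the representation is immediate.

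Next, for $H\neq\tfrac12$, I would rewrite the integral against $B^H$ in \eqref{def:xi} by an integration-by-parts that is justified as a Young integral when $H>\tfrac12$ and interpreted in the Wiener/$L^2$ sense when $H<\tfrac12$ using the isometry \eqref{id:iso}:
\begin{equation*}
\int_0^t e^{\mu(t-s)}\,dB^H_s=B^H_t+\mu\int_0^t e^{\mu(t-s)}B^H_s\,ds.
\end{equation*}
Substituting the Molchan–Golosov formula for $B^H_s$ and applying the stochastic Fubini theorem (justified by $L^2$ integrability of $K^0_H$) yields
\begin{equation*}
\int_0^t e^{\mu(t-s)}\,dB^H_s=\int_0^t\!\left[K^0_H(t,u)+\mu\int_u^t e^{\mu(t-s)}K^0_H(s,u)\,ds\right]dW_u.
\end{equation*}
The calculus step is then to check the identity
\begin{equation*}
K^\mu_H(t,u)=K^0_H(t,u)+\mu\int_u^t e^{\mu(t-s)}K^0_H(s,u)\,ds,
\end{equation*}
which is a direct (if tedious) verification from the definitions \eqref{kernel1}--\eqref{kernel2}; for $H>\tfrac12$ the simpler form \eqref{kernel2} makes this a routine differentiation under the integral sign, while for $H<\tfrac12$ one uses \eqref{kernel1} and is careful about the boundary term at $u=s$. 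Multiplying by $\lambda$ and adding the deterministic drift $e^{\mu t}\xi_0$ gives the claimed representation \eqref{eqn:Volxi} with $U_tx=e^{\mu t}x$.

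Finally, for the canonical property, observe that $\cgg^W_t=\cgg^{B^H}_t$ for every $t\ge 0$ by the Molchan–Golosov theorem, and that $\xi$ and $B^H$ determine each other pathwise at every time, since \eqref{def:xi} expresses $\xi$ as a continuous functional of $B^H$, and conversely $B^H_t=\lambda^{-1}(\xi_t-e^{\mu t}\xi_0-\mu\int_0^t e^{\mu(t-s)}(\xi_s-e^{\mu s}\xi_0)\,ds)$, by an elementary inversion of the variation-of-constants formula \eqref{eqn:dxi}, which is pathwise and adapted. Hence $\cgg^\xi_t=\cgg^{B^H}_t=\cgg^W_t$, and $\xi$ is a canonical Volterra–Gaussian process. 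The main obstacle I expect is the bookkeeping in verifying the kernel identity, especially handling the two regimes $H<\tfrac12$ and $H>\tfrac12$ separately and justifying the stochastic Fubini interchange in the singular-kernel case; the filtration part is then essentially formal.
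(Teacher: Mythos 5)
Your proposal is correct and its core ingredients are the same as the paper's (the Norros--Valkeila--Virtamo representation of $B^H$ by a Brownian motion $W$ generating the same filtration, an integration by parts, and a stochastic Fubini interchange), but the decomposition is genuinely different. The paper never passes through the Molchan--Golosov kernel $K^0_H$: it writes $B^H_t=\int_0^t s^{\gamma}dY_s$ with $Y_s=c_2(H)\int_0^s u^{-\gamma}(s-u)^{\gamma}dW_u$, integrates $\int_0^t e^{\mu(t-s)}dB^H_s$ by parts \emph{against $Y$}, and then Fubini produces the kernel $K^\mu_H$ of \eqref{kernel1} directly, so no separate kernel identity needs checking. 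Your route instead integrates by parts against $B^H$ (variation of constants), substitutes $B^H_s=\int_0^s K^0_H(s,u)dW_u$, and reduces the problem to the identity $K^\mu_H(t,u)=K^0_H(t,u)+\mu\int_u^t e^{\mu(t-s)}K^0_H(s,u)\,ds$; this identity is indeed true and follows in a few lines by swapping the order of integration in \eqref{kernel2} (for $H>\tfrac12$) and in \eqref{kernel1} (for $H<\tfrac12$, using $u\cdot u^{H-3/2}=u^{H-1/2}$), so your extra verification is lighter than you fear. What your approach buys is a conceptually transparent relation between the $\mu=0$ and general kernels; what the paper's buys is avoiding that bookkeeping altogether. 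One slip to fix: your explicit inversion formula for recovering $B^H$ from $\xi$ is wrong — with $\eta_t=\xi_t-e^{\mu t}\xi_0$ one has $\lambda B^H_t=\eta_t-\mu\int_0^t\eta_s\,ds=\xi_t-\xi_0-\mu\int_0^t\xi_s\,ds$ (no factor $e^{\mu(t-s)}$ inside the integral; with that factor the right-hand side equals $\lambda\int_0^te^{\mu(t-u)}(1-\mu(t-u))dB^H_u\neq\lambda B^H_t$). The corrected formula is even simpler, is pathwise and adapted, and the filtration conclusion $\cgg^{\xi}=\cgg^{B^H}=\cgg^{W}$ stands exactly as you argue, matching the paper's terse treatment of this point.
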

	\begin{proof}
		The case $H=\frac12$ is trivial as $W=B^{\frac12}$. We consider $H\neq\frac12$ and denote $\gamma=H-\frac12$. It is shown in \cite{NorrosValkeilaVirtamo99} that there exists a Brownian motion $\{W_t,t\ge0\}$ which generates the same filtration as $B^H$ such that
		\begin{equation}\label{tmp:BHY}
			B^H_t=\int_0^t s^\gamma dY_s
		\end{equation}
		where $Y$ is the process defined by
		\begin{equation}\label{tmp:YW}
			Y_s=c_2(H)\int_0^s u^{-\gamma}(s-u)^{\gamma}dW_u\ .
		\end{equation}
		From \eqref{eqn:dxi} and \eqref{def:xi}, it follows that $\xi$ and $B^H$ generate an identical filtration, thus $\xi$ and $W$ also generate the same filtration. It suffices to find the form of kernel $K$. Using \eqref{tmp:BHY}, \eqref{tmp:YW} and integrating by parts (as is done in the proof of \cite[Theorem 5.2]{NorrosValkeilaVirtamo99}), we see that
		\begin{align*}
			&\int_0^t e^{\mu(t-s)}dB^H_s
			=\int_0^t e^{\mu(t-s)}s^\gamma dY_s
			=t^\gamma Y_t-\int_0^t \frac d{ds}[e^{\mu(t-s)}s^\gamma]Y_s ds
			\\&=c_2(H) t^{\gamma}\int_0^t u^{-\gamma}(t-u)^\gamma dW_u-c_2(H)\int_0^t e^{\mu(t-s)}(\gamma s^{\gamma-1}-\mu s^\gamma)\int_0^su^{-\gamma}(s-u)^\gamma dW_u ds \ .
		\end{align*}
		Upon changing the order of integrations, using stochastic Fubini theorem (cf. \cite[pg. 45]{BiaginiHuOksendalZhang08}), we arrive at
		\begin{equation*}
			\int_0^t e^{\mu(t-s)}dB^H_s=\int_0^t K_H^\mu(t,s)dW_s\ .
		\end{equation*}
		The assertion follows from here and \eqref{def:xi}. 
	\end{proof} 
	While working with a fixed memory $\xi[0,r]$, it is more convenient to interpret the It\^o integral $\int_0^rK^\mu_H(t,u)dW_u $ in a pathwise manner. The following result justifies this point. 
	\begin{lemma}\label{lem:youngK}
		Let $r$ be a fixed non-negative number. With $\PP$-probability one, 
		\begin{equation}\label{e4}
			\int_0^r K^\mu_H(t,s)dW_s=K^\mu_H(t,r)W_r-\int_0^r\partial_s K^\mu_H(t,s)W_sds
		\end{equation}
		for all $t>r$.
		The integral on the right-hand side above is an improper Riemann integral with singularity at $s=0$, that is
		\begin{equation*}
			\int_0^r\partial_u K^\mu_H(t,s)W_sds=\lim_{a\downarrow0}\int_a^r\partial_u K^\mu_H(t,s)W_sds
		\end{equation*}
	\end{lemma}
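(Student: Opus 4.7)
The plan is to regularize the singularity at $s = 0$ by carrying out integration by parts on $[a, r]$ and then sending $a \downarrow 0$. The case $H = \tfrac12$ is immediate since $K^\mu_{1/2}(t, s) = e^{\mu(t-s)}$ is smooth, so I focus on $H \ne \tfrac12$.

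First I would fix $0 < a < r$ and $t > r$ and note that, from \eqref{kernel1}, $s \mapsto K^\mu_H(t, s)$ is $C^1$ on $[a, r]$. Applying It\^o's formula to the product $K^\mu_H(t, s) W_s$ (which reduces to the classical integration by parts because the first factor is a smooth deterministic function of $s$),
\begin{equation*}
\int_a^r K^\mu_H(t, s)\,dW_s = K^\mu_H(t, r) W_r - K^\mu_H(t, a) W_a - \int_a^r \partial_s K^\mu_H(t, s) W_s\,ds.
\end{equation*}
Next I would pass to the limit $a \downarrow 0$. Direct manipulation of \eqref{kernel1} (for $H < \tfrac12$) and \eqref{kernel2} (for $H > \tfrac12$) yields the asymptotics $|K^\mu_H(t, s)| \lesssim s^{1/2 - H}$ and $|\partial_s K^\mu_H(t, s)| \lesssim s^{-1/2 - H}$ as $s \downarrow 0$, with constants uniform in $t$ on compact subsets of $(r, \infty)$. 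Combining these with the a.s. bound $|W_s| = o(s^{1/2 - \epsilon})$ (valid for every $\epsilon > 0$ by the law of the iterated logarithm) and choosing $\epsilon \in (0, 1 - H)$, one finds $K^\mu_H(t, a) W_a \to 0$ almost surely, while $|\partial_s K^\mu_H(t, s) W_s| \lesssim s^{-H - \epsilon}$ is integrable on $(0, r]$. Dominated convergence then turns $\int_a^r \partial_s K^\mu_H(t, s) W_s\,ds$ into the improper integral on the right-hand side of \eqref{e4}, and the It\^o isometry gives $\int_a^r K^\mu_H(t, s)\,dW_s \to \int_0^r K^\mu_H(t, s)\,dW_s$ in $L^2$ (hence a.s. along a subsequence). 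This yields \eqref{e4} for each fixed $t > r$ on a $t$-dependent event of full measure.

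To replace this by a single $\PP$-null set valid for all $t > r$, I would show both sides of \eqref{e4} admit modifications continuous in $t$ on $(r, \infty)$. The LHS is a centered Gaussian field in $t$ whose increment variance $\int_0^r |K^\mu_H(t_1, s) - K^\mu_H(t_2, s)|^2\,ds$ is smooth in $(t_1, t_2)$ by differentiation under the integral (using the uniform kernel bounds on compacts), so Kolmogorov's criterion produces a continuous version; the RHS is continuous in $t$ by pathwise dominated convergence, again via the uniform bound $|\partial_s K^\mu_H(t, s) W_s| \lesssim s^{-H - \epsilon}$. Picking a countable dense sequence $\{t_n\} \subset (r, \infty)$, the identity then holds at every $t_n$ off a common null set and extends to all $t > r$ by continuity. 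The main technical obstacle is the careful extraction of the uniform-in-$t$ asymptotics of $K^\mu_H$ and $\partial_s K^\mu_H$ near $s = 0$ from \eqref{kernel1}, which, while elementary, requires separating the regimes $H < \tfrac12$ and $H > \tfrac12$ and handling the Leibniz boundary term that arises when differentiating the inner integral.
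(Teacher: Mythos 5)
Your overall route is the same as the paper's: integrate by parts on $[a,r]$ (you via the It\^o/classical product rule for a deterministic $C^1$ integrand, the paper via Young integration by parts -- these give the identical identity), then let $a\downarrow0$, controlling the boundary term and the integrability of $\partial_s K^\mu_H(t,\cdot)W_\cdot$ through the a.s.\ H\"older continuity of $W$ at $0$. Your extra paragraph producing a single null set valid for all $t>r$ via continuity in $t$ addresses a point the paper leaves implicit, and is a reasonable addition.

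However, the quantitative step on which everything rests is wrong for $H<\frac12$. You claim $|K^\mu_H(t,s)|\lesssim s^{1/2-H}$ and $|\partial_s K^\mu_H(t,s)|\lesssim s^{-1/2-H}$ as $s\downarrow0$ in both regimes. For $H<\frac12$ the kernel does not stay bounded near $s=0$: by Lemma \ref{lem:K0} (case $\mu=0$), $s^{\frac12-H}K^0_H(1,s)\to H/c_2(H)$, so $K^0_H(1,s)$ blows up like $s^{H-\frac12}$, and the same $s^{-(\frac12-H)}$ blow-up occurs for general $\mu$: the term $s^{\frac12-H}\int_s^t e^{\mu(t-u)}(H-\frac12-\mu u)u^{H-\frac32}(u-s)^{H-\frac12}\,du$ in \eqref{kernel1} is of order $s^{\gamma}\cdot s^{-2\gamma}=s^{H-\frac12}$ with $\gamma=\frac12-H$. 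Likewise the correct derivative bound is $|\partial_s K^\mu_H(t,s)|\lesssim 1+s^{-\gamma}+s^{-\gamma-1}$, i.e.\ the worst singularity is $s^{H-\frac32}$, not $s^{-\frac12-H}$; compare \eqref{id:partial_sK}--\eqref{est:partialsK}. Consequently your choice $\epsilon\in(0,1-H)$ does not in general make $|\partial_s K^\mu_H(t,s)W_s|$ integrable when $H<\frac12$: with the true exponents one needs $\epsilon<\frac12-|H-\frac12|$ (that is, $\epsilon<H$ when $H<\frac12$), which is exactly the role of the paper's choice of H\"older exponent $\theta\in(\gamma,\frac12)$. With the corrected bounds $|K^\mu_H(t,s)|\lesssim s^{-|H-\frac12|}$ and $|\partial_s K^\mu_H(t,s)|\lesssim s^{-1-|H-\frac12|}$ the rest of your argument (vanishing of $K^\mu_H(t,a)W_a$, dominated convergence for the Riemann integral, $L^2$-convergence of the It\^o integrals, continuity in $t$) goes through as in the paper, so this is a wrong estimate rather than a wrong idea -- but as written the key verification fails on the whole range $H<\frac12$, which is precisely the rough case where the kernel computation requires the most care.
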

	\begin{proof}
		The case $H=\frac12$ is obvious. Herein, we assume $H\neq\frac12$. Fix $a\in(0,r)$, as is explained at the beginning of this section, the It\^o integral $\int_a^r K^\mu_H(t,s)dW_s$ coincides (almost surely) with its Young counter part. Hence, using integration by parts for Young integration (cf. \cite{HuLe}), 
		\begin{equation}\label{tmp:ar}
			\int_a^r K^\mu_H(t,s)dW_s=K^\mu_H(t,r)W_r-K^\mu_H(t,a)W_a-\int_a^r\partial_s K^\mu_H(t,s)W_sds\,.
		\end{equation}
		For the moment, we consider the case $H<\frac12$, we put $\gamma=\frac12-H$ and note that $\gamma\in(0,\frac12)$. We rewrite \eqref{kernel1} as
		\begin{equation*}
			K^\mu_H(t,s)=c_2(H)\lt[t^{-\gamma}s^\gamma(t-s)^{-\gamma}+\int_0^{t-s}e^{\mu(t-s-u)}(\gamma+\mu (u+s))s^\gamma(u+s)^{-\gamma-1}u^{-\gamma}du \rt]\,.
		\end{equation*}
		From here we obtain
		\begin{align}
			\frac{\partial_sK^\mu_H(t,s)}{c_2(H)}&=\gamma t^{ -\gamma+1}s^{-\gamma-1} (t-s)^{-\gamma-1} 
			-(\gamma+\mu t)s^\gamma t^{-\gamma-1}(t-s)^{-\gamma}
			\nonumber\\&+\mu s^{\gamma}\int_0^{t-s}e^{\mu(t-s-u)}(u+s)^{-\gamma-1}u^{-\gamma}du
			\nonumber\\&+s^{\gamma}\int_0^{t-s}e^{\mu(t-s-u)}(\gamma+\mu(u+s))(u+s)^{-\gamma-1}u^{-\gamma}\lt(-\mu+\frac \gamma s-\frac{\gamma+1}{u+s} \rt)du\,.
			\label{id:partial_sK}
		\end{align}
		To estimate $|\partial_s K^\mu_H(t,s)|$, we use the following two elementary estimates
		\begin{equation}
			 \int_0^{t-s}e^{\mu(t-s-u)}u^{-\gamma}(u+s)^{-\gamma}du\le	s^{- \gamma}\max\{1,e^{\mu t}\} \int_0^{t} u^{-\gamma}du
		\end{equation}
		and for $\theta>0$ such that $\gamma+\theta>1$,
		\begin{equation}
		 	\int_0^{t-s}e^{\mu(t-s-u)}u^{-\gamma}(u+s)^{-\theta}du\le s^{-\gamma- \theta+1}\max\{1,e^{\mu t}\} \int_0^{\infty}u^{-\gamma}(u+1)^{-\theta}du\,.
		\end{equation} 
		Bounding $t-s\ge t-r$, it follows that
		\begin{equation}\label{est:partialsK}
			|\partial_s K^\mu_H(t,s)|\lesssim 1+s^{-\gamma}+s^{-\gamma-1}\,,
		\end{equation}
		where the implied constant depends only on $\mu, \gamma,t,r$. 
		Since the sample paths of $W$ are almost surely $(1/2)^-$-H\"older continuous, $\lim_{s\downarrow0}s^{-\kappa}W_s=0$ for all $0<\kappa<1/2$. In particular, the Riemann integrals $\int_0^r s^{-\gamma}|W_s|ds$ and $\int_0^r s^{-\gamma-1}|W_s|ds$ are well-defined and finite. Together with \eqref{est:partialsK}, the integral
		\begin{equation*}
		 	\int_0^r\partial_u K^\mu_H(t,u)W_udu:=\lim_{a\downarrow0}\int_a^r\partial_u K^\mu_H(t,u)W_udu
		\end{equation*}
		is well-defined. The H\"older regularity of $W$ also yields 
		\begin{equation*}
			\lim_{a\downarrow0}K^\mu_H(t,a)W_a=0\,.
		\end{equation*}
		Sending $a$ to $0$ in \eqref{tmp:ar}, we obtain the result for $H<\frac12$. The case $H>\frac12$ is carried out analogously and easier, employing \eqref{kernel2} instead of \eqref{kernel1}, we skip the details.
	\end{proof}
	
	Let $r$ be a fixed non-negative number. As is described in Section \ref{sec:preliminaries}, one can construct a branching particle system $X=\{X_t,t\ge r\}$ starting from a memory $\xi[0,r]$ such that the spatial movement of each particle follows the law of $\xi$ conditioned on $\cgg_r$.
	The law of $X$ and its corresponding expectation are denoted by $\PP_{\xi[0,r]}$ and $\EE_{\xi[0,r]}$ respectively.

	Before stating limit theorems for this branching system, let us investigate the moments and regularity of $\xi$ conditional on $\cgg_r$. For conciseness (and without loss of generality), we identify $\PP_{\xi[0,r]}=\PP(\,\cdot\,|\cgg_r)$ and $\EE_{\xi[0,r]}=\EE(\,\cdot\,|\cgg_r)$.
\begin{lemma} For all $t\ge r$, we have
	\begin{equation}\label{eqn:m2xi}
		\EE_{\xi[0,r]} |\xi_t-e^{\mu t}\xi_0|^2\le\lt|\int_0^r K^\mu_H(t,u)dW_u\rt|^2+  c_1(H)\lambda^2  \int_\RR \frac{1-2e^{\mu t}\cos(xt)+e^{2 \mu t} }{\mu^2+ |x|^2}|x|^{1-2H}dx\,.
	\end{equation}
When $r=0$, \eqref{eqn:m2xi} becomes an equality. In particular,
	\begin{equation}\label{eqn:lim-}
		\lim_{t\to\infty}\EE|\xi_t|^2=c_1(H)\lambda^2 |\mu|^{-2H} \int_\RR \frac1{1+x^2}|x|^{1-2H}dx\quad\mbox{if}\quad \mu<0
	\end{equation}
		and
		\begin{equation}\label{eqn:lim+}
			\lim_{t\to\infty}e^{-2 \mu t} \EE|\xi_t-e^{\mu t}\xi_0|^2=c_1(H)\lambda^2|\mu|^{-2H}\int_\RR \frac1{1+x^2}|x|^{1-2H}dx\quad\mbox{if}\quad \mu>0\,.
		\end{equation}
	\end{lemma}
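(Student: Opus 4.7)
The plan is to use the canonical Volterra representation~\eqref{eqn:Volxi} for $\xi$, splitting the integral at $r$ so that the conditional expectation reduces to a classical It\^o isometry, and then to identify the unconditional second moment with the fractional isometry~\eqref{id:iso} in order to obtain the explicit frequency-domain expression on the right-hand side of~\eqref{eqn:m2xi}.

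First I would write, thanks to Proposition~\ref{prop:xiVol},
\[
	\xi_t-e^{\mu t}\xi_0=\lambda\int_0^r K^\mu_H(t,u)dW_u+\lambda\int_r^tK^\mu_H(t,u)dW_u,
\]
where the first summand is $\cgg_r$-measurable and the second, conditional on $\cgg_r$, is a centered Gaussian random variable with variance $\lambda^2\int_r^t|K^\mu_H(t,u)|^2du$ by the It\^o isometry applied to the standard Brownian motion $W$. Consequently
\[
	\EE_{\xi[0,r]}|\xi_t-e^{\mu t}\xi_0|^2=\lambda^2\Bigl|\int_0^r K^\mu_H(t,u)dW_u\Bigr|^2+\lambda^2\int_r^t|K^\mu_H(t,u)|^2du,
\]
and bounding $\int_r^t\le\int_0^t$ (the integrand being non-negative) yields the desired inequality, provided we can identify $\int_0^t|K^\mu_H(t,u)|^2du$ with the frequency-domain integral. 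Equality when $r=0$ is then immediate since the first summand disappears and $\int_r^t=\int_0^t$.

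To identify the two expressions, I would apply~\eqref{id:iso} to the deterministic integrand $f(s)=1_{[0,t]}(s)\,e^{\mu(t-s)}$, for which \eqref{def:xi} at $\xi_0=0$ gives
\(
\xi_t=\lambda\int_\RR f(s)dB^H_s
\),
whence $\EE|\xi_t|^2=\lambda^2\int_0^t|K^\mu_H(t,u)|^2du$ by Proposition~\ref{prop:xiVol}. A direct Fourier computation shows
\[
	\hat f(x)=\frac{e^{\sqrt{-1}xt}-e^{\mu t}}{\sqrt{-1}x-\mu},\qquad|\hat f(x)|^2=\frac{1-2e^{\mu t}\cos(xt)+e^{2\mu t}}{\mu^2+x^2},
\]
which, combined with \eqref{id:iso} and a pass to the approximation by elementary functions, gives exactly the integral on the right-hand side of~\eqref{eqn:m2xi}.

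For the limits \eqref{eqn:lim-} and \eqref{eqn:lim+}, I would just apply dominated convergence to the now-explicit integral. In both cases $|1-2e^{\mu t}\cos(xt)+e^{2\mu t}|\le 4$ (for $\mu<0$, $t\ge 0$, and for $\mu>0$ after multiplying by $e^{-2\mu t}$), and the function $|x|^{1-2H}/(\mu^2+x^2)$ is integrable on $\RR$ (local behaviour $|x|^{1-2H}$ with $1-2H>-1$, decay $|x|^{-1-2H}$ at infinity). The pointwise limit is $|x|^{1-2H}/(\mu^2+x^2)$, and the scaling $x=|\mu|y$ turns the resulting integral into $|\mu|^{-2H}\int_\RR|y|^{1-2H}/(1+y^2)dy$, matching the stated formulae. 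The only mildly delicate point is the domination argument near the singularity at $x=0$, but the factor $|x|^{1-2H}$ with $H<1$ makes it automatic; no step here appears to be a genuine obstacle.
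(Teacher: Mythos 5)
Your proposal is correct and follows essentially the same route as the paper: split the Volterra representation at $r$, use the (conditional) It\^o isometry and the bound $\int_r^t\le\int_0^t$, identify $\int_0^t|K^\mu_H(t,s)|^2ds$ with the spectral integral by applying \eqref{id:iso} to $f=1_{[0,t]}e^{\mu(t-\cdot)}$, and obtain the limits via dominated convergence together with the scaling $x=|\mu|y$. The only cosmetic differences are that the paper writes the Fourier factor as $e^{2\mu t}\bigl|\tfrac{e^{-\mu t+\sqrt{-1}xt}-1}{-\mu+\sqrt{-1}x}\bigr|^2$ (identical to your $|\hat f|^2$) and invokes the identity $\int_\RR\frac{|x|^{1-2H}}{\mu^2+x^2}dx=|\mu|^{-2H}\int_\RR\frac{|x|^{1-2H}}{1+x^2}dx$ rather than the substitution explicitly; also, the factor $\lambda^2$ you carry on the memory term is exactly what the paper's own computation produces.
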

	\begin{proof}
		From \eqref{def:xi}, we see that
		\begin{equation}\label{tmp:xiK}
			\EE_{\xi[0,r]} |\xi_t-e^{\mu t}\xi_0|^2
			=\lambda^2\EE_{\xi[0,r]}\lt[\int_0^t e^{\mu (t-s)}dB_s^H\rt]^2 \,.
		\end{equation}
		On the other hand, from Proposition \ref{prop:xiVol}, 
		\begin{align*}
			\EE_{\xi[0,r]}\lt[\int_0^t e^{\mu(t-s)}dB_s^H \rt]^2
			&=\EE_{\xi[0,r]}\lt[\int_0^t K^\mu_H(t,s)dW_s \rt]^2
			\\&=\lt|\int_0^r K^\mu_H(t,s)dW_s\rt|^2+\int_r^t|K^\mu_H(t,s)|^2ds
			\\&\le \lt|\int_0^r K^\mu_H(t,s)dW_s\rt|^2+\int_0^t|K^\mu_H(t,s)|^2ds\,.
		\end{align*}
		Applying Proposition \ref{prop:xiVol} again and using \eqref{id:iso}, we see that
		\begin{align*}
			\int_0^t|K^\mu_H(t,s)|^2ds=\EE\lt[\int_0^t e^{\mu(t-s)}dB_s^H \rt]^2
			&=c_1(H) e^{2 \mu t} \int_\RR \lt|\frac{e^{-\mu t +\sqrt{-1} xt}-1}{- \mu+\sqrt{-1} x}\rt|^2|x|^{1-2H}dx
			\\&=c_1(H)\int_\RR \frac{1-2e^{\mu t}\cos(xt)+e^{2 \mu t} }{\mu^2+ |x|^2}|x|^{1-2H}dx\,.
		\end{align*}
		Hence, we obtain the following inequality
		\begin{multline}\label{e2}
			\EE_{\xi[0,r]}\lt[\int_0^t e^{\mu(t-s)}dB_s^H \rt]^2\le \lt|\int_0^r K^\mu_H(t,s)dW_s\rt|^2
			\\+ c_1(H)\int_\RR \frac{1-2e^{\mu t}\cos(xt)+e^{2 \mu t} }{\mu^2+ |x|^2}|x|^{1-2H}dx\,.
		\end{multline}
		When $r=0$, the above inequality becomes an equality. 	
		Combining \eqref{e2} with \eqref{tmp:xiK} yields \eqref{eqn:m2xi}. The two stated limits are simple consequences of \eqref{eqn:m2xi} and the following identity
		\begin{equation*}
			\int_\RR\frac{|x|^{1-2H}}{\mu^2+x^2}dx=\mu^{-2H}\int_\RR\frac{|x|^{1-2H}}{1+x^2}dx\,.
		\end{equation*}
		We conclude the proof.
	\end{proof}
	\begin{lemma}\label{lem:KtKs}
		For $\mu=0$, we have
		\begin{equation}\label{est:ErKts0}
			\lt(\EE_{\xi[0,r]} \lt|\int_r^t K^0_H(t,u)dW_u-\int_r^sK^0_H(s,u)dW_u \rt|^2\rt)^{\frac12}\le |t-s|^{H}\,.
		\end{equation}
		For each $\mu< 0$, there exists a positive constant $c(\mu,H)$ such that for every $t\ge s\ge r\ge0$, we have
		\begin{equation}\label{est:ErKts}
			\lt(\EE_{\xi[0,r]} \lt|\int_r^t K^\mu_H(t,u)dW_u-\int_r^sK^\mu_H(s,u)dW_u \rt|^2\rt)^{\frac12}\le  c(\mu,H)|t-s|+|t-s|^{H}\,.
		\end{equation}
	\end{lemma}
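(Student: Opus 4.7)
The plan is to dispose of the conditioning through an independence argument, thereby reducing both bounds to an unconditional $L^2$-estimate for the full Volterra process
\[
Y_\tau:=\int_0^\tau K^\mu_H(\tau,u)dW_u=\int_0^\tau e^{\mu(\tau-u)}dB^H_u,
\]
where the second equality comes from Proposition~\ref{prop:xiVol}. First I would set $N^\mu_\tau:=\int_r^\tau K^\mu_H(\tau,u)dW_u$. Because $N^\mu_\tau$ depends only on the Brownian increments $\{W_u-W_r:u\in[r,\tau]\}$, it is independent of $\cgg_r=\sigma(W_u:u\le r)$, hence
\[
\EE_{\xi[0,r]}|N^\mu_t-N^\mu_s|^2=\EE|N^\mu_t-N^\mu_s|^2.
\]
The orthogonal decomposition $Y_\tau=\int_0^r K^\mu_H(\tau,u)dW_u+N^\mu_\tau$ identifies the first summand with $\EE[Y_\tau|\cgg_r]$, so that $N^\mu_t-N^\mu_s=(Y_t-Y_s)-\EE[Y_t-Y_s|\cgg_r]$; the $L^2$-contractivity of conditional expectation then gives $\EE|N^\mu_t-N^\mu_s|^2\le\EE|Y_t-Y_s|^2$.

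When $\mu=0$ we simply have $Y_\tau=B^H_\tau$, so $\EE|Y_t-Y_s|^2=(t-s)^{2H}$, which proves \eqref{est:ErKts0} at once. For $\mu<0$ my next step is to derive the affine identity
\[
Y_t-Y_s=\mu\int_s^t Y_u\,du+(B^H_t-B^H_s),
\]
either via the factorization $Y_\tau=e^{\mu\tau}\int_0^\tau e^{-\mu u}dB^H_u$ followed by integration by parts, or directly through a stochastic Fubini applied to the deterministic kernel. Minkowski's inequality then delivers
\[
\|Y_t-Y_s\|_{L^2}\le|\mu|\int_s^t\|Y_u\|_{L^2}\,du+(t-s)^H,
\]
and it only remains to bound $\sup_{u\ge0}\|Y_u\|_{L^2}$. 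This is finite because, for $\mu<0$, the formula used in \eqref{eqn:lim-} writes $\EE|Y_u|^2$ as $c_1(H)\int_\RR(\mu^2+x^2)^{-1}(1-2e^{\mu u}\cos(xu)+e^{2\mu u})|x|^{1-2H}dx$, and the pointwise bound $|1-2e^{\mu u}\cos(xu)+e^{2\mu u}|\le 4$ yields a uniform estimate in $u$. Setting $c(\mu,H)=|\mu|\sup_{u\ge0}\|Y_u\|_{L^2}$ then produces \eqref{est:ErKts}.

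The only genuinely technical step is justifying the affine identity for $H<\tfrac12$, where the pathwise meaning of $\int_0^\tau e^{\mu(\tau-u)}dB^H_u$ is more delicate. However, since the integrand is deterministic and smooth, a Wiener-integral Fubini (applied after swapping $\int_s^t du$ with $\int_0^u dB^H_v$) gives the identity in $L^2(\PP)$, so the obstacle is essentially notational. Everything else is a clean consequence of orthogonality and Minkowski, and the argument avoids any need to differentiate the kernel $K^\mu_H$.
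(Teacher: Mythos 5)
Your proposal is correct and follows essentially the same route as the paper: reduce to $r=0$ (the paper does this by computing $\EE_{\xi[0,r]}|{\cdot}|^2=\int_r^s|K^\mu_H(t,u)-K^\mu_H(s,u)|^2du+\int_s^t|K^\mu_H(t,u)|^2du$ and enlarging the first integral to $\int_0^s$, while you invoke independence of increments and orthogonality of the conditional expectation — the same underlying Wiener-isometry fact), then for $\mu=0$ identify the quantity with $\|B^H_t-B^H_s\|_2=|t-s|^H$, and for $\mu<0$ use the integral form of the equation, Minkowski's inequality, and the uniform second-moment bound obtained from the spectral formula with the pointwise bound $1-2e^{\mu u}\cos(xu)+e^{2\mu u}\le 4$. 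No gaps; the affine identity you worry about for $H<\tfrac12$ is immediate in the paper's setting since \eqref{def:xi} is by construction the solution of \eqref{eqn:dxi} in integral form.
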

	\begin{proof}
		Since $W$ has independent increments, it follows that
		\begin{align*}
			&\EE_{\xi[0,r]} \lt|\int_r^t K^\mu_H(t,u)dW_u-\int_r^sK^\mu_H(s,u)dW_u \rt|^2
			\\&=\int_r^s |K^\mu_H(t,u)-K^\mu_H(s,u)|^2du +\int_s^t|K^\mu_H(t,u)|^2du 
			\\&\le \int_0^s |K^\mu_H(t,u)-K^\mu_H(s,u)|^2du +\int_s^t|K^\mu_H(t,u)|^2du 
			\\&=\EE \lt|\int_0^t K^\mu_H(t,u)dW_u-\int_0^sK^\mu_H(s,u)dW_u \rt|^2\,.
		\end{align*}
		Hence, it suffices to show \eqref{est:ErKts0} and \eqref{est:ErKts} for $r=0$. If $\mu=0$, in view of Proposition \ref{prop:xiVol}, the left-hand side of \eqref{est:ErKts0} with $r=0$ is the $L^2(\Omega)$-norm of $B_t^H-B^H_s$, which is exactly $|t-s|^H$. \eqref{est:ErKts0} is proved.
		Suppose now that $\mu<0$, in view of Proposition \ref{prop:xiVol}, \eqref{est:ErKts} with $r=0$ is equivalent to
		\begin{equation}\label{tmp:e3}
			\lt(\EE|\xi_t- \xi_s-(e^{\mu t}-e^{\mu s})\xi_0|^2\rt)^{\frac12}\le \lambda c(\mu,H)|t-s|+\lambda|t-s|^H\,.
		\end{equation}
		To show this, we start by writing \eqref{def:xi} into an integral form
		\begin{equation*}
			\xi_t-\xi_s =\mu\int_s^t \xi_u du+\lambda (B_t^H-B_s^H)\,.
		\end{equation*}
		In addition, we also have
		\begin{equation*}
			(e^{\mu t}-e^{\mu s}) =\mu\int_s^t e^{\mu u}du\,.
		\end{equation*}
		We denote by $\|\cdot \|_{2}$ the $L^2(\Omega)$-norm, using Minkowski inequality, we have
		\begin{equation*}
			\|\xi_t- \xi_s-(e^{\mu t}-e^{\mu s})\xi_0 \|_{2}\le |\mu|\int_s^t\|\xi_u-e^{\mu u}\xi_0\|_{2}du+\lambda\|B^H_t-B^H_s\|_{2} \,.
		\end{equation*}
		On the other hand, it follows  from \eqref{eqn:m2xi} that 
		\begin{equation*}
			\EE|\xi_u-e^{\mu u}\xi_0|^2\le \lambda^2c_1(H) \int_\RR \frac{4 }{\mu^2+ |x|^2}|x|^{1-2H}dx \,.
		\end{equation*}
		Upon combining the previous two estimates, we arrive at \eqref{tmp:e3}. The result follows.
	\end{proof}
	As an immediate consequence, we have the following tail estimate.
	\begin{proposition}\label{prop:estPmax}
		Suppose that $\mu\le 0$, $b>a>r\ge 0$ and $b-a\le 1$. For every $\epsilon>0$ and $p>\frac2H$, there exists a positive constant $C(\mu,\lambda,H,p)$ such that
		\begin{multline}
			\PP_{\xi[0,r]}(\sup_{s,t\in[a,b]}|\xi_t- \xi_s|\ge 3\epsilon )\le C(\mu,\lambda,H,p) \epsilon^{-p} |b-a|^{pH}
			\\ +1_{(|e^{\mu b}-e^{\mu a}||\xi_0|\ge \epsilon)}
			+1_{(\lambda\sup_{s,t\in[a,b]}|\int_0^r (K(t,u)-K(s,u))dW_u |\ge \epsilon )  }\,.
		\end{multline}
	\end{proposition}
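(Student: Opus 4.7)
The plan is to decompose $\xi_t-\xi_s$ into three pieces using the Volterra representation \eqref{eqn:Volxi} and, under the conditioning $\PP_{\xi[0,r]}=\PP(\,\cdot\,|\cgg_r)$, to separate what is $\cgg_r$-measurable from what is not. Splitting each Wiener integral at $r$ gives
\begin{equation*}
\xi_t-\xi_s=(e^{\mu t}-e^{\mu s})\xi_0+\lambda\int_0^r\lt[K^\mu_H(t,u)-K^\mu_H(s,u)\rt]dW_u+\lambda(M_t-M_s),
\end{equation*}
where $M_t:=\int_r^tK^\mu_H(t,u)dW_u$. The first term is deterministic; the second is $\cgg_r$-measurable and therefore deterministic under $\PP_{\xi[0,r]}$; only $M_t-M_s$ carries genuine randomness beyond $\cgg_r$.

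A union bound then reduces the task: if $\sup_{s,t\in[a,b]}|\xi_t-\xi_s|\ge 3\epsilon$, then at least one of the three corresponding suprema exceeds $\epsilon$. Monotonicity of $t\mapsto e^{\mu t}$ (valid since $\mu\le 0$) yields $\sup_{s,t\in[a,b]}|e^{\mu t}-e^{\mu s}|=|e^{\mu b}-e^{\mu a}|$, so that its contribution is exactly the indicator $1_{(|e^{\mu b}-e^{\mu a}||\xi_0|\ge\epsilon)}$. The memory contribution likewise becomes the indicator $1_{(\lambda\sup|\int_0^r(K(t,u)-K(s,u))dW_u|\ge\epsilon)}$ since the conditional probability of a $\cgg_r$-measurable event is $0$ or $1$. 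It remains to dominate $\PP_{\xi[0,r]}\lt(\lambda\sup_{s,t\in[a,b]}|M_t-M_s|\ge\epsilon\rt)$ by the first quantitative term.

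For this I would observe that under $\PP_{\xi[0,r]}$ the process $\{M_t\}_{t\ge r}$ is centered Gaussian, since $W$ has independent increments relative to $\cgg_r$. Lemma \ref{lem:KtKs}, together with $|t-s|\le b-a\le 1$ and $H\in(0,1)$, yields $\lt(\EE_{\xi[0,r]}|M_t-M_s|^2\rt)^{1/2}\le C(\mu,H)|t-s|^H$, the linear term being absorbed into the H\"older term on sub-unit intervals. Gaussian hypercontractivity then upgrades this to $\EE_{\xi[0,r]}|M_t-M_s|^p\le C_{\mu,H,p}|t-s|^{pH}$ for every $p\ge 2$. For $p>2/H$, the exponent $pH-2>0$ makes the Garsia-Rodemich-Rumsey inequality applicable (with $\Psi(x)=x^p$ and auxiliary function $u\mapsto u^H$), producing
\begin{equation*}
\EE_{\xi[0,r]}\sup_{s,t\in[a,b]}|M_t-M_s|^p\le C'_{\mu,H,p}(b-a)^{pH},
\end{equation*}
after which Markov's inequality delivers the required $C(\mu,\lambda,H,p)\epsilon^{-p}(b-a)^{pH}$ term.

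The main obstacle is conceptual rather than computational: because the Volterra kernel $K^\mu_H(t,u)$ depends on its first argument, $M$ is \emph{not} a martingale, so Doob-type maximal inequalities are not directly available. What rescues the argument is that only the covariance structure of $M$ is needed, and that is precisely what Lemma \ref{lem:KtKs} controls; Gaussianity converts the $L^2$ H\"older estimate into $L^p$ estimates for all $p$, after which the standard chaining via Garsia-Rodemich-Rumsey handles the supremum uniformly in $s,t\in[a,b]$.
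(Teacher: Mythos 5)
Your proposal is correct and follows essentially the same route as the paper: the same three-way split of $\xi_t-\xi_s$ into the drift term $(e^{\mu t}-e^{\mu s})\xi_0$, the $\cgg_r$-measurable memory increment, and $f_t-f_s$ with $f_t=\lambda\int_r^t K^\mu_H(t,u)dW_u$, the same use of Lemma \ref{lem:KtKs} plus Gaussian $L^p$--$L^2$ moment equivalence (with the linear term absorbed since $b-a\le 1$), and the same Garsia--Rodemich--Rumsey inequality with $\Psi(x)=|x|^p$, $p(u)=|u|^H$ followed by Markov. The only cosmetic difference is that you take the expectation of $\sup|f_t-f_s|^p$ before applying Markov, whereas the paper applies Markov to the GRR double-integral event and then takes expectations; the computation is identical.
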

	\begin{proof}
		We will use the following application of Garsia-Rodemich-Rumsey's inequality (cf. \cite{garsiarodemich}): for every $s,t\in[a,b]$ and continuous function $f$,
		\begin{equation*}
			|f_t-f_s|^p\le C^p |t-s|^{pH-2}\int_a^b\int_a^b\frac{|f_u-f_v|^p}{|u-v|^{pH}}dudv\,,
		\end{equation*}
		where $C$ is some absolute constant. (To obtain the above inequality, we choose $\Psi(u)=|u|^p $ and $p(u)=|u|^H$ in the notation of \cite{garsiarodemich}.) In this specific case, the above inequality is also called Morrey-Sobolev embedding inequality. In our situation, we put
		\[f_t:=\xi_t-e^{\mu t}\xi_0- \lambda\int_0^r K^\mu_H(t,u)dW_u=\lambda\int_r^tK^\mu_H(t,u)dW_u \,.\]
It follows that
	\begin{align*}
		&\PP_{\xi[0,r]} (\sup_{s,t\in[a,b]}|\xi_t- \xi_s|\ge 3\epsilon )
		\\&\le \PP_{\xi[0,r]} (\sup_{s,t\in[a,b]}|f_t- f_s|\ge \epsilon ) +\PP_{\xi[0,r]} (\sup_{s,t\in[a,b]}|e^{\mu t}-e^{\mu s}||\xi_0|\ge \epsilon )  
		\\&\quad+\PP_{\xi[0,r]} (\lambda\sup_{s,t\in[a,b]}|\int_0^r (K(t,u)-K(s,u))dW_u |\ge \epsilon )  
		\\&\le\PP_{\xi[0,r]} \lt( \int_a^b\int_a^b\frac{|f_u-f_v|^p}{|u-v|^{pH}}dudv\ge \frac{\epsilon^p}{C^p|b-a|^{pH-2}} \rt)  +1_{(|e^{\mu b}-e^{\mu a}||\xi_0|\ge \epsilon)}
		\\&\quad+1_{(\lambda\sup_{s,t\in[a,b]}|\int_0^r (K(t,u)-K(s,u))dW_u |\ge \epsilon )  }\,.
	\end{align*}
	The probability on the right-hand side can be estimated by Markov's inequality
		\begin{multline*}
			\PP_{\xi[0,r]}\lt( \int_a^b\int_a^b\frac{|f_u-f_v|^p}{|u-v|^{pH}}dudv\ge \frac{\epsilon^p}{C^p|b-a|^{pH-2}} \rt) 
			\\\le (\epsilon^{-1} C)^{p}|b-a|^{pH-2}\int_a^b\int_a^b\frac{\EE_{\xi[0,r]}|f_u-f_v|^p}{|u-v|^{pH}}dudv\,.
		\end{multline*}
		Note that for each $u,v$, $f_u-f_v$ is a centered Gaussian random variable, its $L^p(\Omega)$-norm is equivalent to its $L^2(\Omega)$-norm, which is estimated in Lemma \ref{lem:KtKs}. More precisely, we have
		\begin{equation*}
			\EE_{\xi[0,r]}|f_u-f_v|^p\lesssim (\EE_{\xi[0,r]}|f_u-f_v|^2)^{p/2}\lesssim |u-v|^{pH}
		\end{equation*}
		for all $u,v\in[a,b]$. The result follows upon combining these estimates together. 
	\end{proof}

\noindent\textbf{Branching fractional Brownian motion system.}
	Let us now consider the case $\mu=0$. In this case, $\xi=\lambda B^H$. In other words, the spatial motions follow the law of a fractional Brownian motion with intensity $\lambda$. Using the notation in Section \ref{sec:preliminaries}, we have $\sigma(t)=|\lambda|t^{H}$, which verifies \ref{c1} with $\ell=\infty$. To verify other hypothesis of Theorem \ref{thm:SLLN}, we first observe the following result.
	\begin{lemma}\label{lem:K0}
		If $H\in(\frac12,1)$, then
		\begin{equation}\label{limker1}
			\lim_{s\downarrow0}s^{H-\frac12}K_H^0(1,s)=\frac{c_2(H)}2\quad\mbox{and}\quad\lim_{t\to\infty}t^{1-2H} K^0_H(t,s)=\frac{c_2(H)}2 s^{\frac12-H}\,.
		\end{equation}
		If $H\in(0,\frac12)$, then
		\begin{equation}\label{limker2}
			\lim_{s\downarrow0}s^{\frac12-H}K_H^0(1,s)=\frac{H}{c_2(H)}\quad\mbox{and}\quad\lim_{t\to\infty}K^0_H(t,s)=\frac H{c_2(H)} s^{H-\frac12}\,.
		\end{equation}
	\end{lemma}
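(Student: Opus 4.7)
The plan is to prove the two first limits (as $s\downarrow 0$) directly from the explicit integral representations \eqref{kernel1} and \eqref{kernel2}, and then derive the second limits (as $t\to\infty$) as consequences of the first via the scaling identity \eqref{id:Kscale}. With $\mu=0$ that identity reduces to $K^0_H(\kappa t,\kappa s) = \kappa^{H-1/2}K^0_H(t,s)$; taking $\kappa = 1/t$ yields $K^0_H(t,s) = t^{H-1/2} K^0_H(1, s/t)$. Rewriting
\[
t^{1-2H} K^0_H(t,s) = s^{1/2-H}\bigl[(s/t)^{H-1/2}K^0_H(1,s/t)\bigr]
\]
for $H>\tfrac12$ (and analogously $K^0_H(t,s) = s^{H-1/2}\bigl[(s/t)^{1/2-H}K^0_H(1,s/t)\bigr]$ for $H<\tfrac12$) reduces each second limit to the corresponding first one applied at $s/t\downarrow 0$.

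For $H>\tfrac12$, I would use \eqref{kernel2} with $\mu=0$ to write
\[
s^{H-1/2}K^0_H(1,s) = (H-\tfrac12)\,c_2(H)\int_s^1 u^{H-1/2}(u-s)^{H-3/2}\,du,
\]
and split the integral at $2s$. On $[2s,1]$, $(u-s)^{H-3/2}\le 2^{3/2-H} u^{H-3/2}$, so $u^{H-1/2}(u-s)^{H-3/2}$ is dominated by the integrable $2^{3/2-H} u^{2H-2}$ (recall $2H-2>-1$); dominated convergence then gives $\int_{2s}^1\to\int_0^1 u^{2H-2}du = 1/(2H-1)$. On $[s,2s]$ the elementary bound $\int_s^{2s}(u-s)^{H-3/2}du = s^{H-1/2}/(H-\tfrac12)$ shows the contribution is $O(s^{2H-1})\to 0$. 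Multiplying by $(H-\tfrac12)c_2(H)$ produces the claimed value $c_2(H)/2$.

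For $H<\tfrac12$, I would start from \eqref{kernel1} with $\mu=0$:
\[
s^{1/2-H}K^0_H(1,s) = c_2(H)\,s^{1-2H}\left[(1-s)^{H-1/2} + (\tfrac12-H)\int_s^1 u^{H-3/2}(u-s)^{H-1/2}du\right].
\]
Since $s^{1-2H}\to 0$, the $(1-s)^{H-1/2}$ term contributes $0$. For the integral the substitution $u=s(1+v)$ gives
\[
s^{1-2H}\int_s^1 u^{H-3/2}(u-s)^{H-1/2}du \;=\; \int_0^{(1-s)/s}(1+v)^{H-3/2} v^{H-1/2}\,dv,
\]
which, as $s\downarrow 0$, converges by monotone/dominated convergence to the Beta integral $B(H+\tfrac12,\,1-2H) = \Gamma(H+\tfrac12)\Gamma(1-2H)/\Gamma(3/2-H)$ (integrability at $0$ and $\infty$ is immediate for $H\in(0,\tfrac12)$). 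Thus the limit equals $c_2(H)(\tfrac12-H)\Gamma(H+\tfrac12)\Gamma(1-2H)/\Gamma(3/2-H)$; using the definition of $c_2(H)$ together with $\Gamma(2-2H)=(1-2H)\Gamma(1-2H)$, a short algebraic simplification shows this equals $H/c_2(H)$.

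The main obstacle is the $H<\tfrac12$ case: one has to track the cancellation between the diverging factor $s^{H-3/2}$ in the integrand and the vanishing prefactor $s^{1-2H}$, identify the resulting fixed-shape integral as a Beta function, and then match constants using the specific form of $c_2(H)$. The $H>\tfrac12$ first limit and the two scaling reductions are routine by comparison.
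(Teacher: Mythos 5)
Your proposal is correct and follows essentially the same route as the paper: for $H>\tfrac12$ pass to the limit in the integral of \eqref{kernel2} to get $\int_0^1 u^{2H-2}du$, for $H<\tfrac12$ rescale the integral in \eqref{kernel1} (your substitution $u=s(1+v)$ is the paper's $u=sv$ shifted) to a Beta integral and simplify with the Gamma-function form of $c_2(H)$, and obtain the $t\to\infty$ limits from the $s\downarrow0$ ones via \eqref{id:Kscale}. The only difference is that you spell out the dominated/monotone convergence justifications that the paper leaves implicit.
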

	\begin{proof}
		Suppose first $H>\frac12$. From \eqref{kernel2}, we have
		\begin{align*}
			\lim_{s\downarrow0} s^{H-\frac12}K_H^0(1,s)&=(H-\frac12)c_2(H)\lim_{s\downarrow0}\int_s^1 u^{H-1/2}(u-s)^{H-3/2}du
			\\&=(H-\frac12)c_2(H)\int_0^1 u^{2H-2}du\,,
		\end{align*}
which shows the first limit in \eqref{limker1}. The second limit in \eqref{limker1} is a consequence of the first and \eqref{id:Kscale}.
If $H<\frac12$, we use \eqref{kernel1}, then a change of variable to see that
\begin{align*}
\lim_{s\downarrow0} s^{\frac12-H}|K_H^0(1,s)|&=c_2(H) \lim_{s\downarrow0}
s^{1-2H}\lt[(1-s)^{H-\frac12}-\int_s^1 (H-\frac12)u^{H-\frac32}(u-s)^{H-\frac12}du
\rt]
\\&=(\frac12-H)c_2(H) \lim_{s\downarrow0}s^{1-2H}\int_s^1 u^{H-3/2}(u-s)^{H-1/2}du
\\&=(\frac12-H)c_2(H) \lim_{s\downarrow0}\int_1^{1/s} u^{H-3/2}(u-1)^{H-1/2}du
\\&=(\frac12-H)c_2(H) \int_1^{\infty} u^{H-3/2}(u-1)^{H-1/2}du\,.
\end{align*}
		Noting that the last integral above can be computed using the relation between Beta and Gamma functions,
		\begin{equation*}
		 	\int_1^\infty u^{H-3/2}(u-1)^{H-1/2}du=\frac{\Gamma(H+\frac12)\Gamma(1-2H)}{\Gamma(\frac32-H)}
		 	 =\frac{H}{(\frac12-H)c_2(H)^2}\,,
		\end{equation*} 
		we obtain the first limit in \eqref{limker2}. The second limit in \eqref{limker2} is a consequence of the first limit and \eqref{id:Kscale}.
	\end{proof}
	\begin{lemma}\label{lem:fBMtypmem}
		For fixed $r\ge0$, with probability one
		\begin{equation*}
			\lim_{t\to\infty}t^{-H}\int_0^r K^0_H(t,s)dW_s=0\,.
		\end{equation*}
		In other words, almost all sample paths of $\xi[0,r]$ are typical memories.
	\end{lemma}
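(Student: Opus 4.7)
The plan is to combine the pathwise representation in Lemma \ref{lem:youngK} with the scaling identity \eqref{id:Kscale} and the boundary asymptotics in Lemma \ref{lem:K0}. Setting $\mu=0$ in Lemma \ref{lem:youngK} yields
\[
\int_0^r K^0_H(t,s)\,dW_s \;=\; K^0_H(t,r)\,W_r \;-\; \int_0^r \partial_s K^0_H(t,s)\,W_s\,ds,
\]
and I will treat the two terms separately after multiplying by $t^{-H}$. The case $H=\tfrac12$ is trivial, since $K^0_{1/2}(t,s)\equiv 1$ makes the left-hand side equal to $W_r$ and $t^{-1/2}W_r\to 0$ almost surely; henceforth I assume $H\neq\tfrac12$.

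For the boundary term, Lemma \ref{lem:K0} gives $K^0_H(t,r)\sim \tfrac{c_2(H)}{2}\,r^{1/2-H}\,t^{2H-1}$ when $H>\tfrac12$, and $K^0_H(t,r)\to \tfrac{H}{c_2(H)}\,r^{H-1/2}$ when $H<\tfrac12$. In both regimes $t^{-H}K^0_H(t,r)\to 0$ (in the first case because $H<1$, in the second trivially), and multiplying by the almost-surely finite $W_r$ preserves the convergence to zero.

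For the integral term, I exploit the scaling identity \eqref{id:Kscale}: with $\mu=0$ it reads $K^0_H(t,s)=t^{H-1/2}K^0_H(1,s/t)$, so $\partial_s K^0_H(t,s)=t^{H-3/2}\kappa(s/t)$ where $\kappa(u):=(\partial_u K^0_H)(1,u)$. Changing variable $u=s/t$ gives
\[
t^{-H}\int_0^r \partial_s K^0_H(t,s)\,W_s\,ds \;=\; t^{-1/2}\int_0^{r/t}\kappa(u)\,W_{tu}\,du.
\]
The main obstacle is the singularity of $\kappa$ at the origin: differentiating the small-$u$ asymptotics in Lemma \ref{lem:K0}, which is readily justified from \eqref{kernel1}--\eqref{kernel2} (and is already visible in \eqref{est:partialsK} for $H<\tfrac12$), gives $|\kappa(u)|\lesssim u^{-1/2-H}$ when $H>\tfrac12$ and $|\kappa(u)|\lesssim u^{H-3/2}$ when $H<\tfrac12$. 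In the former regime this bound is not even locally integrable near zero, so a naive $L^\infty$ bound on $W$ cannot be used.

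To absorb the singularity I would invoke the almost-sure Hölder regularity of Brownian motion: for any $\epsilon\in(0,H\wedge\tfrac12)$ there is a random constant $C_\omega<\infty$ with $|W_v|\le C_\omega v^{1/2-\epsilon}$ for all $v\in[0,r]$. Substituting $|W_{tu}|\le C_\omega(tu)^{1/2-\epsilon}$ turns the integrand above into a multiple of $u^{-H-\epsilon}$ (when $H>\tfrac12$) or $u^{H-1-\epsilon}$ (when $H<\tfrac12$), each integrable near zero once $\epsilon$ is small enough. A direct computation of $\int_0^{r/t}u^{-H-\epsilon}\,du$ or $\int_0^{r/t}u^{H-1-\epsilon}\,du$ then bounds the whole expression by $C_\omega C'\,t^{H-1}$ or $C_\omega C'\,t^{-H}$ respectively, both of which vanish as $t\to\infty$. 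Combined with the boundary-term estimate, this proves the claimed almost-sure limit.
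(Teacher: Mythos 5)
Your argument is correct, and although it starts from the same two reductions as the paper --- the pathwise formula of Lemma \ref{lem:youngK} splits the integral into the boundary term $K^0_H(t,r)W_r$ and the term $\int_0^r\partial_s K^0_H(t,s)W_s\,ds$, and the boundary term is handled by Lemma \ref{lem:K0} exactly as in the paper --- your treatment of the integral term takes a genuinely different route. The paper differentiates the kernel at scale $t$ directly and shows that $|\partial_s K^0_H(t,s)|$ is bounded, uniformly in large $t$, by $t$-decaying terms plus $s^{-\gamma-1}$ with $\gamma=|H-\tfrac12|$, so that the prefactor $t^{-H}$ alone forces the limit to zero; only the case $H<\tfrac12$ is written out there. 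You instead invoke the self-similarity \eqref{id:Kscale} to pull everything back to the kernel at $t=1$, change variables so the integration domain shrinks to $(0,r/t]$, and pay for the singularity of $\partial_u K^0_H(1,u)$ with the almost-sure H\"older bound $|W_v|\le C_\omega v^{1/2-\epsilon}$ at the origin. This buys a unified treatment of $H<\tfrac12$ and $H>\tfrac12$ and explicit almost-sure rates ($t^{H-1}$, resp.\ $t^{-H}$), which the paper's argument does not produce. Two small points to tighten: for $H>\tfrac12$ you also need $\epsilon<1-H$ for the integrability of $u^{-H-\epsilon}$ (your phrase ``$\epsilon$ small enough'' covers this, but the stated range $(0,H\wedge\tfrac12)$ by itself does not); and the bound $|\partial_u K^0_H(1,u)|\lesssim u^{-1/2-H}$ for $H>\tfrac12$ should not be justified by ``differentiating the asymptotics'' of Lemma \ref{lem:K0}, which is not a licit operation in general --- it does hold, but via a short direct computation from \eqref{kernel2}, in the same spirit as the paper's estimate \eqref{est:partialsK} for $H<\tfrac12$ (which, with $t=1$ and $u$ small, is exactly what you use in the rough regime).
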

	\begin{proof}
		In view of Lemma \ref{lem:youngK}, it suffices to show
		\begin{equation}\label{tmp:e6}
		 	\lim_{t\to\infty}t^{-H}K^0_H(t,r)=0
		\end{equation} 
		and
		\begin{equation}\label{tmp:e7}
			\lim_{t\to\infty}t^{-H}\int_0^r|\partial_s K^0_H(t,s)||W_s|ds=0\,.
		\end{equation}
		\eqref{tmp:e6} is a consequence of Lemma \ref{lem:K0}. We now consider \eqref{tmp:e7} in the case $H<\frac12$. As in the proof of Lemma \ref{lem:youngK}, putting $\gamma=\frac12-H$, we have
		\begin{align*}
			\frac{\partial_s K^0_H(t,s)}{c_2(H)} &=\gamma t^{-\gamma+1}s^{\gamma-1}(t-s)^{-\gamma-1}
			-\gamma s^\gamma t^{-\gamma-1}(t-s)^{-\gamma}
			\\&\quad+\int_0^{t-s} \gamma s^\gamma(u+s)^{-\gamma-1} u^{-\gamma}\lt(\frac \gamma s-\frac{\gamma+1}{u+s} \rt)du\,.
		\end{align*}
		Applying the following estimate
		\begin{equation*}
			\int_0^{t-s}(u+s)^{-\eta}u^{-\gamma}du 
			\le\int_0^{\infty}(u+s)^{-\eta}u^{-\gamma}du =s^{-\gamma- \eta+1}\int_0^\infty (u+1)^{-\eta}u^{-\gamma}du
		\end{equation*}
		which holds for $\eta>0$ such that $\gamma+\eta>1$, one has that 
		\begin{equation*}
			|\partial_s K^0_H(t,s)|\lesssim t^{-\gamma+1}(t-r)^{-\gamma-1}s^{-\gamma-1}+s^{\gamma}t^{-\gamma-1}(t-r)^{-\gamma}+ s^{-\gamma-1}\,,
		\end{equation*}
		where the implied constant is independent of $s$ and $t$. \eqref{tmp:e7} is deduced from here. In case $H>\frac12$, \eqref{tmp:e7} is proved analogously, we skip the details. 
	\end{proof}
	\begin{lemma}\label{lem:fbmregmem}
		For fixed $r\ge0$, with probability one the map $t\mapsto\int_0^r K^0_H(t,s)dW_s$ is uniformly continuous on $[r+1,\infty)$.
	\end{lemma}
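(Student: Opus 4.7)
The plan is to reduce uniform continuity of the stochastic integral to pathwise continuity estimates for the kernel, using the representation
\begin{equation*}
    \int_0^r K^0_H(t,s)\,dW_s = K^0_H(t,r)W_r - \int_0^r \partial_s K^0_H(t,s)\, W_s \,ds
\end{equation*}
provided by Lemma \ref{lem:youngK}. When $H=\tfrac12$ the kernel is identically $1$ and the integral reduces to $W_r$, which is constant in $t$; from now on assume $H\neq\tfrac12$. It suffices to show that, almost surely, each of the two $t$-dependent terms on the right is Lipschitz on $[r+1,\infty)$ with a (path-dependent but) finite Lipschitz constant, since Lipschitz functions are uniformly continuous.

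For the boundary term, differentiate the closed-form expression \eqref{kernel2} (for $H>\tfrac12$) or \eqref{kernel1} (for $H<\tfrac12$) with $\mu=0$ in $t$ to obtain $\partial_t K^0_H(t,r)$ explicitly as a product of powers of $t$, $t-r$, and $r$. Since $t-r\ge 1$ on our domain and the leading asymptotic behavior is of order $t^{2H-2}$, which is bounded for $H<1$, we get a uniform bound on $|\partial_t K^0_H(\cdot,r)|$ over $[r+1,\infty)$. Hence $t\mapsto K^0_H(t,r)$ is (deterministically) Lipschitz on $[r+1,\infty)$.

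For the integral term, the plan is to interchange differentiation and integration:
\begin{equation*}
    \int_0^r \partial_s K^0_H(t_1,s) W_s \,ds - \int_0^r \partial_s K^0_H(t_2,s) W_s \,ds = \int_0^r \Bigl(\int_{t_2}^{t_1} \partial_t\partial_s K^0_H(u,s)\, du\Bigr) W_s\, ds,
\end{equation*}
and to show that $|\partial_t\partial_s K^0_H(u,s)| \le \phi(s)\psi(u)$ where $\psi$ is bounded on $[r+1,\infty)$ and $\phi(s)\lesssim s^{-|H-\tfrac12|-1}$ near $s=0$. Differentiating the explicit expressions for $\partial_s K^0_H(t,s)$ appearing in the proof of Lemma \ref{lem:youngK} yields such a bound: the large-$u$ decay is again governed by $u^{2H-2}$, while the small-$s$ behavior produces the claimed integrable singularity. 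Because almost every Brownian path is $\kappa$-H\"older continuous for every $\kappa<\tfrac12$, we may pick $\kappa>|H-\tfrac12|$ and conclude that $\int_0^r \phi(s)|W_s|\,ds<\infty$ almost surely. Then the difference above is bounded by $|t_1-t_2|\cdot \sup_{u\ge r+1}\psi(u)\cdot \int_0^r \phi(s)|W_s|\,ds$, which gives the desired Lipschitz estimate.

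The main delicate point is extracting the factorized bound $|\partial_t\partial_s K^0_H(u,s)| \le \phi(s)\psi(u)$ with the correct simultaneous behavior at $s\to 0^+$ and $u\to\infty$: the singularity at $s=0$ must be integrable against the H\"older path $W$, and the $u$-dependence must be bounded on $[r+1,\infty)$. As in the derivation of \eqref{est:partialsK}, the key input is that $u-s\ge 1$ on our domain, which renders all negative powers $(u-s)^\alpha$ bounded; the remaining work is a careful bookkeeping of exponents in the derivatives of \eqref{kernel1} or \eqref{kernel2}, analogous to but slightly more involved than the analysis already carried out in the proofs of Lemmas \ref{lem:youngK} and \ref{lem:fBMtypmem}.
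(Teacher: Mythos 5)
Your proposal is correct and follows essentially the same route as the paper: reduce via the pathwise representation of Lemma \ref{lem:youngK}, bound $\partial_t K^0_H(t,r)$ uniformly on $[r+1,\infty)$ (behavior $t^{2H-2}$), and bound the mixed derivative $\partial_t\partial_s K^0_H$ by a product of a bounded function of $t$ and an $s$-singularity integrable against the H\"older-continuous Brownian path, yielding a Lipschitz (hence uniform continuity) estimate. The explicit kernel differentiations you defer are exactly the computations the paper carries out, and your claimed exponents match them.
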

	\begin{proof}
		In view of Lemma \ref{lem:youngK}, it suffices to show
		\begin{equation}\label{tmp:fbmdevK}
			\sup_{t\ge r+1}|\partial_t K^0_H(t,r)|\quad\mbox{and}\quad \sup_{t\ge r+1}\int_0^r|\partial_t\partial_s K^0_H(t,s)||W_s|ds
		\end{equation}
		are finite. We will derive estimates for partial derivatives of $K^0_H(t,s)$ below. The case $H=\frac12$ is trivial. Consider the case $H\neq\frac12$, from \eqref{kernel1} we obtain, putting $\gamma=H-\frac12$
		\begin{align*}
			\frac{\partial_t K^0_H(t,s)}{c_2(H)}= \gamma t^{\gamma}(t-s)^{\gamma-1}s^{-\gamma}
		\end{align*}
		and
		\begin{align*}
			\frac{\partial_t\partial_s K^0_H(t,s)}{c_2(H)}=\frac{\partial_s\partial_t K^0_H(t,s)}{c_2(H)}= \gamma t^{\gamma}(t-s)^{\gamma-2}s^{-\gamma-1}(s-\gamma t)\,.
		\end{align*}
		In particular, since $\gamma<\frac12$, the first supremum in \eqref{tmp:fbmdevK} is finite. In addition, for all $t>s$, we have
		\begin{equation*}
			|\partial_t\partial_s K^0_H(t,s)|\lesssim t^{\gamma+1}(t-r)^{\gamma-2}s^{-\gamma-1}\,.
		\end{equation*}
		Since $W$ is H\"older continuous of order $\theta$ for every $\theta<\frac12$, we have $|W_s|\lesssim s^\theta$ for all $s\in[0,r]$. It follows that
		\begin{equation*}
			\int_0^r|\partial_t\partial_s K^0_H(t,s)||W_s|ds\lesssim t^{\gamma+1}(t-r)^{\gamma-2}\int_0^r s^{\theta- \gamma-1} 	ds\,.
		\end{equation*}
		The integral on the right-hand side is finite as soon as we choose $\theta<\frac12$ so that $\theta>\gamma$, which is always possible because $\gamma<\frac12$.  This implies the second supremum in \eqref{tmp:fbmdevK} is also finite. 
	\end{proof}
	\begin{theorem}\label{thm:fbm}
		Let $\xi[0,r]$ be a typical memory of non-negative length $r$. Let $X=\{X_t,t\ge r\}$ be a branching particle system whose initial memory is $\xi[0,r]$ and underlying spatial movement is $\lambda B^H$. Assuming that \ref{c0} is satisfied and the map $t\to\int_0^r K^0_H(t,u)dW_u$ is uniformly continuous on $[ r+1,\infty)$, then with $\PP_{\xi[0,r]}$-probability one, for every continuous  function $f:\RR^d\to\RR$ such that $\sup_{x\in\RR^d}e^{\epsilon|x| }|f(x)|<\infty$ for some $\epsilon>0$, we have
		\begin{equation*}
			\lim_{t\to\infty}e^{-\beta t}t^{Hd}X_t(f)=e^{-\beta r} (2 \pi \lambda^2)^{-\frac d2}F\int_{\RR^d}f(y)dy \,,
		\end{equation*}
		where $F$ is the random variable defined in \eqref{def:F}.
	\end{theorem}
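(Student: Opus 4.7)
The plan is to reduce Theorem \ref{thm:fbm} to a direct application of Theorem \ref{thm:SLLN} by verifying the hypotheses \ref{c0}--\ref{con:sl.tnmax} in the special case $\mu=0$, $U_t=I$. Note first that from Proposition \ref{prop:xiVol} the spatial motion $\xi = \lambda B^H$ is a canonical Volterra-Gaussian process with kernel $\lambda K^0_H(t,u)$ and $U_t=I$, so $\sigma^2(t) = \lambda^2 \int_0^t |K^0_H(t,u)|^2 du = \lambda^2 t^{2H}$ by the variance of $B^H$. Hence $\sigma(t)=|\lambda|t^H$, which verifies \ref{c1} with $\ell = \infty$ (since $e^{-\beta t}t^{Hd}\to 0$), and also \ref{con:U} with $U_\infty=0$, because $U_tx/\sigma(t)=x/(|\lambda|t^H)\to 0$. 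Because $\ell=\infty$ and $U_\infty\equiv0$, the limit \eqref{eqn:slln} collapses to $e^{-\beta r}F(2\pi)^{-d/2}\int f(y)\,dy$, which after rescaling $\sigma^d(t)=|\lambda|^d t^{Hd}$ matches the stated conclusion.

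For condition \ref{con:s1s} I would set $b(t)=t^{\theta}$ with some $\theta \in (0,1)$. The scaling relation \eqref{id:Kscale} gives $|K_H^0(t,u)|^2 = t^{2H-1}|K_H^0(1,u/t)|^2$, so after a change of variable $v=u/t$,
\begin{equation*}
\frac{\sigma_1^2(t,b(t))}{\sigma^2(t)} \;=\; \frac{\int_0^{b(t)/t}|K_H^0(1,v)|^2 dv}{\int_0^1|K_H^0(1,v)|^2 dv}.
\end{equation*}
Lemma \ref{lem:K0} controls the singularity of $K_H^0(1,v)$ at $v=0$ (order $v^{1/2-H}$ for $H>1/2$ and $v^{H-1/2}$ for $H<1/2$); in either case the numerator is bounded by a constant times $(b(t)/t)^{\min(2-2H,2H)}$, so multiplying by $\sqrt{\ln t}$ still tends to $0$. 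The decay condition $e^{-\beta b(t)}\sigma^d(t)\to 0$ is immediate since $e^{-\beta t^\theta}$ beats any polynomial.

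The main work lies in verifying \ref{con:sl.tnmax}. I would take $t_n = n^\gamma$ with $\gamma \in (0,1)$; then $t_n\ge n^\gamma$, $t_{n+1}-t_n \sim \gamma n^{\gamma-1}\to 0$, and $\sigma(t)/\sigma(t_n)=(t/t_n)^H\to 1$ uniformly on $[t_n,t_{n+1}]$, verifying \eqref{limtn}. The summability $\sum_n e^{-\beta b(t_{n-1})}\sigma^d(t_n)<\infty$ in \eqref{sumtn} follows because $e^{-\beta n^{\gamma\theta}}$ decays superpolynomially. The delicate remaining part is \eqref{sumPmax}. Here I invoke Proposition \ref{prop:estPmax} with $\mu=0$: since the deterministic drift term vanishes and, by the standing hypothesis, $t\mapsto\int_0^r K_H^0(t,u)dW_u$ is uniformly continuous on $[r+1,\infty)$, the indicator term $\mathbf 1_{(\lambda\sup|\int_0^r(K(t,u)-K(s,u))dW_u|\ge\epsilon)}$ is zero for all $n$ large enough once $t_{n+1}-t_n$ is less than the modulus of continuity at $\epsilon/\lambda$. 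What remains is
\begin{equation*}
\sigma^d(t_n)\,\PP_{\xi[0,r]}\bigl(\sup_{u,v\in[t_n,t_{n+1}]}|\xi_u-\xi_v|\ge 3\epsilon\bigr) \;\lesssim\; \epsilon^{-p}\, t_n^{Hd}(t_{n+1}-t_n)^{pH},
\end{equation*}
and with $t_n=n^\gamma$ this is comparable to $n^{\gamma Hd+pH(\gamma-1)}$, which is summable once $p$ is chosen large enough (beyond $\max(2/H,(1+\gamma Hd)/(H(1-\gamma)))$).

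The hardest technical step is the sample-path regularity estimate needed for \eqref{sumPmax}: one has to absorb the two-scale difficulty that $\sigma^d(t_n)$ is blowing up polynomially while the probability must decay strongly enough, which forces using a large moment $p$ in the Garsia--Rodemich--Rumsey bound of Proposition \ref{prop:estPmax}, and crucially relies on the uniform continuity hypothesis on the memory map to kill the deterministic indicator. Once \ref{con:sl.tnmax} is checked (and Lemma \ref{lem:fBMtypmem} guarantees that the typical memory condition holds a.s.), Theorem \ref{thm:SLLN} applies directly with $\ell=\infty$ and $U_\infty=0$, producing $e^{-\beta r}(2\pi)^{-d/2}F\int f(y)dy$; dividing out $|\lambda|^d$ from $\sigma^d(t)=|\lambda|^d t^{Hd}$ yields the $(2\pi\lambda^2)^{-d/2}$ normalization in the conclusion.
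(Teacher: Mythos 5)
Your proposal is correct and follows essentially the same route as the paper's proof: verify \ref{c1}--\ref{con:sl.tnmax} for $\sigma(t)=|\lambda|t^H$, $\ell=\infty$, $U_\infty=0$, use the scaling identity \eqref{id:Kscale} together with Lemma \ref{lem:K0} for \ref{con:s1s}, and use Proposition \ref{prop:estPmax} with the uniform-continuity hypothesis on the memory term plus a large GRR exponent $p$ to get the summability in \eqref{sumPmax}, then apply Theorem \ref{thm:SLLN}. The only differences are cosmetic choices ($b(t)=t^{\theta}$ versus $\sqrt t$, $t_n=n^{\gamma}$ versus $r+n^{\kappa}$, explicit small-argument kernel asymptotics versus L'H\^opital), which do not change the argument.
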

	\begin{proof}
		We simply verify the hypothesis of Theorem \ref{thm:SLLN}. \ref{c1} and \ref{con:U} are trivial with $\sigma(t)=\lambda t^H$, $\ell=\infty$, $U_tx=x$ and $U_\infty\equiv0$. Using the scaling relation \eqref{id:Kscale}, we see that
	\begin{align*}
		t^{-2H}\ln t\int_0^{\sqrt t} |K_H^0(t,s)|^2ds &=\ln t\int_0^{t^{-1/2}} |K_H^0(1,s)|^2ds
	\end{align*}
	which by Lemma \ref{lem:K0} and L'H\^opital's rule, converges to 0 as $t\to\infty$ (note that the case $H=\frac12$ is obvious). Thus \ref{con:s1s} is verified with $b(t)=\sqrt t$.

		For \ref{con:sl.tnmax}, we choose $t_n=r+n^{\kappa}$ where $\kappa$ is any fixed constant in $(0,1)$. It is easy to verify that this sequence satisfies \eqref{limtn} and \eqref{sumtn}. Let $\epsilon$ be a positive number. Let $n_0\in\NN$ be sufficiently large so that
		\begin{equation*}
			\lambda\sup_{s,t\in[r+n^\kappa,r+(n+1)^\kappa]} \lt|\int_0^r K(t,u)dW_u-\int_0^r K(s,u)dW_u\rt|< \epsilon
		\end{equation*}
		for all $n\ge n_0$. It is always possible to find such $n_0$ because $t\mapsto \int_0^rK(t,u)dW_u$ is uniformly continuous and $|(n+1)^{\kappa}-n^\kappa| \lesssim n^{-(1- \kappa)}$.  The estimate in Proposition \ref{prop:estPmax} yields
		\begin{align*}
			\PP_{\xi[0,r]}(\sup_{s,t\in[r+n^\kappa,r+(n+1)^\kappa]} |\xi_s- \xi_t|\ge 3\epsilon)
			\lesssim |(n+1)^{\kappa}-n^\kappa|^{pH}\lesssim n^{-pH(1- \kappa)}
		\end{align*}
		for every $n\ge n_0$ and $p>\frac2H$. Hence,
		\begin{align*}
			\sum_{n=n_0}^\infty n^{dH\kappa}\PP_{\xi[0,r]}(\sup_{s,t\in[r+n^\kappa,r+(n+1)^\kappa]} |\xi_s- \xi_t|\ge 3\epsilon)
			\lesssim \sum_{n=n_0}^\infty n^{dH\kappa-pH(1- \kappa) }\,,
		\end{align*}
		which is convergent when $p>\frac{dH \kappa+1}{H(1- \kappa)}$. Hence \ref{con:sl.tnmax} is verified. 
	\end{proof}
	\begin{remark}
		(i) Lemmas \ref{lem:youngK}, \ref{lem:fBMtypmem} and \ref{lem:fbmregmem} readily imply that almost all sample paths of $\xi[0,r]$ satisfy the hypothesis of Theorem \ref{thm:fbm}.

		\noindent(ii) It is interesting to observe that the limit object for the branching fractional Brownian system does not depend on the value of Hurst parameter $H$. Moreover, since $F$ is independent of the spatial motions, Theorem \ref{thm:fbm} indicates a universality phenomenon among the class of fractional Brownian motions with Hurst parameter $H$ varying in $(0,1)$.
	\end{remark}
	

\noindent \textbf{Branching fractional Ornstein-Uhlenbeck particle system.}
	Let us consider the case $\mu<0$. As noted earlier, in this case, the process $\xi$ is a fractional Ornstein-Uhlenbeck process. As in the case of fraction Brownian motions, we start with a few observations on the memories.
	\begin{lemma}\label{lem:fOUtypmem}
		For fixed $\mu<0$ and $r\ge0$, with probability one
		\begin{equation*}
			\lim_{t\to\infty}\int_0^r K_H^\mu(t,s)dW_s=0\,.
		\end{equation*}
		In other words, almost all sample paths of $\xi[0,r]$ are typical memories.
	\end{lemma}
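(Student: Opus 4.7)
The plan rests on Lemma \ref{lem:youngK}, which gives the pathwise identity
\[
\int_0^r K_H^\mu(t,s)\,dW_s = K_H^\mu(t,r)\,W_r - \int_0^r \partial_s K_H^\mu(t,s)\,W_s\,ds
\]
valid for every $t > r$ almost surely. It therefore suffices to fix a realization of $W$ outside a null set (on which $W$ is continuous and H\"older of any order $<1/2$) and show that each term on the right-hand side tends deterministically to $0$ as $t\to\infty$. The case $r=0$ is vacuous and the case $H=\tfrac12$ is trivial since $K_{1/2}^\mu(t,s)=e^{\mu(t-s)}$, so I would focus on $r>0$ and $H\ne\tfrac12$.

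For the boundary term $K_H^\mu(t,r)$, I would use \eqref{kernel2} when $H>\tfrac12$ and make the change of variables $v=t-u$:
\[
K_H^\mu(t,r) = (H-\tfrac12)c_2(H)\,r^{1/2-H}\int_0^{t-r} e^{\mu v}(t-v)^{H-1/2}(t-v-r)^{H-3/2}\,dv.
\]
Splitting the range of $v$ at $t/2$ and using $\mu<0$, the contribution from $v\le t/2$ is at most a constant times $t^{H-1/2}\cdot t^{H-3/2}\int_0^\infty e^{\mu v}dv \lesssim t^{2H-2}$, while the contribution from $v\ge t/2$ is exponentially small because $e^{\mu v}\le e^{\mu t/2}$ dominates any polynomial growth. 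Hence $K_H^\mu(t,r)=O(t^{2H-2})\to 0$ since $H<1$. The analogous estimate for $H<\tfrac12$ follows from \eqref{kernel1}: the leading term $(t/r)^{H-1/2}(t-r)^{H-1/2}$ is $O(t^{2H-1})$, and the integral part is handled by the same $t/2$-splitting, where the factor $-\mu u$ that looks like $O(t)$ for $u$ near $t$ is absorbed by the extra negative power of $u$ and the exponential.

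For the second term I would use dominated convergence on the fixed realization of $W$, starting from the explicit expression \eqref{id:partial_sK} for $\partial_s K_H^\mu(t,s)$ derived in the proof of Lemma \ref{lem:youngK}. That proof already supplies the pointwise majorant $1+s^{-\gamma}+s^{-\gamma-1}$ whose product with $|W_s|$ is integrable on $[0,r]$, thanks to the sample path regularity $|W_s|\lesssim s^\theta$ for $\theta<\tfrac12$ with $\theta>\gamma$. Refining that bookkeeping term-by-term, applying the same change of variables $v=t-s-u$ and splitting at $v=t/2$ to each of the four summands in \eqref{id:partial_sK}, I expect to obtain a stronger bound of the form $|\partial_s K_H^\mu(t,s)|\le h(t)\,g(s)$ uniform in $s\in(0,r]$, with $h(t)\to 0$ polynomially and $g(s)|W_s|$ integrable on $[0,r]$; the case $H>\tfrac12$ is analogous and simpler from \eqref{kernel2}.

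The main obstacle is Step 3, specifically the terms in \eqref{id:partial_sK} containing $(\gamma+\mu(u+s))$, which look like $O(t)$ when $u$ is close to $t$: one must show that the exponential damping $e^{\mu(t-s-u)}$ together with the polynomial factors produces net decay in $t$, uniformly in $s\in(0,r]$, mirroring Step 1 but with the additional parameter $s$ present throughout the estimates. Once both terms go to $0$ almost surely, the lemma follows, and the claim that almost all sample paths of $\xi[0,r]$ are typical memories is immediate because for fractional Ornstein--Uhlenbeck with $\mu<0$ one has $\sigma(t)\to \ell\in(0,\infty)$ by \eqref{eqn:lim-}, so vanishing of the numerator in \eqref{TypMem} is equivalent to vanishing of the ratio.
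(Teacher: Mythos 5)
Your skeleton is the same as the paper's: reduce via Lemma \ref{lem:youngK} to the two limits $\lim_{t\to\infty}K^\mu_H(t,r)=0$ and $\lim_{t\to\infty}\int_0^r|\partial_s K^\mu_H(t,s)|\,|W_s|\,ds=0$, and use $|W_s|\lesssim s^\theta$ with $\theta\in(\gamma,\tfrac12)$ to control the $s$-singularities; your closing remark about \eqref{TypMem} being equivalent to vanishing of the numerator (since $\sigma(t)$ tends to a finite positive limit) is also fine. Where you differ is in the technical devices. The paper kills the boundary term and the integrals \eqref{int1} with the L'H\^opital limit \eqref{tmp:LH} plus dominated convergence (a $t$-uniform, $s$-integrable majorant together with pointwise decay in $t$ for each fixed $s$), and then needs a separate trick, $e^{-x}\lesssim x^{-\rho}$ with $\rho\in(1-\gamma,1)$, for the borderline integrals \eqref{int2} involving $(u+s)^{-\gamma}u^{-\gamma}$, because there the naive bound (replace the exponential by $1$ and integrate over $(0,\infty)$) diverges. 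Your range-splitting at half the length is a more elementary substitute that treats both groups at once and yields decay uniform in $s\in(0,r]$; in particular the obstacle you flag does close by exactly the device you propose, since the $O(t)$ factor $\gamma+\mu(u+s)$ is first absorbed by $(u+s)^{-\gamma-1}$, and then
\[
\int_0^{t-s}e^{\mu(t-s-u)}(u+s)^{-\gamma}u^{-\gamma}\,du
\le e^{\mu(t-s)/2}\int_0^{(t-s)/2}u^{-2\gamma}\,du+\left(\frac{t-s}{2}\right)^{-2\gamma}\int_0^{\infty}e^{\mu v}\,dv
\lesssim e^{\mu(t-r)/2}\,t^{1-2\gamma}+(t-r)^{-2\gamma}\,,
\]
which tends to $0$ uniformly for $s\in(0,r]$ (the first piece uses $(u+s)^{-\gamma}\le u^{-\gamma}$ and $2\gamma<1$). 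The multiplicative $s$-factors left over, of order $s^{\gamma-1}$ or milder, are exactly those made integrable against $|W_s|\lesssim s^\theta$ with $\theta>\gamma$. The one caveat: as written, this half of your argument is a program (``I expect to obtain $|\partial_s K^\mu_H(t,s)|\le h(t)g(s)$'') rather than a proof, and the majorant $1+s^{-\gamma}+s^{-\gamma-1}$ from \eqref{est:partialsK} cannot be reused as is because its implied constant depends on $t$; the term-by-term bookkeeping in \eqref{id:partial_sK} (which is precisely the content of the paper's proof of \eqref{tmp:633}) must actually be carried out, but with the estimate displayed above it goes through, so your plan is sound and, in the estimates, genuinely more elementary than the paper's.
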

	\begin{proof}
		In view of Lemma \ref{lem:youngK}, it suffices to show
		\begin{equation}\label{tmp:K}
			\lim_{t\to\infty}K^\mu_H(t,r)=0
		\end{equation}
		and
		\begin{equation}\label{tmp:sK}
			\lim_{t\to\infty}\int_0^r|\partial_s K^\mu_H(t,s)||W_s|ds=0\,.
		\end{equation}
We note that by L'H\^opital's rule, for every $\kappa\in\RR$ and $\eta>-1$,
		\begin{equation}\label{tmp:LH}
			\lim_{t\to\infty}t^{\kappa}(t-s)^{\eta} \int_s^t e^{\mu(t-u)}u^{-\kappa}(u-s)^{-\eta}du=-\mu^{-1}\,.
		\end{equation}
		The limit \eqref{tmp:K} is readily obtained by applying \eqref{tmp:LH} into \eqref{kernel1} and \eqref{kernel2}. 

		We now focus on showing \eqref{tmp:sK}. Using H\"older continuity of $W$ at $0$ (as in Lemma \ref{lem:fbmregmem}), it suffices to show
		\begin{equation}\label{tmp:633}
			\lim_{t\to\infty}\int_0^r|\partial_s K^\mu_H(t,s)|s^\theta ds=0\,,
		\end{equation}
		where $\theta$ is fixed in $(\gamma,1/2)$. We consider only the case $H<\frac12$ and leave the remaining case to the readers. 
		We recall that $\partial_s K^\mu_H(t,s)$ is computed explicitly in \eqref{id:partial_sK}. The integrations of the absolute values of the first two terms on the right-hand side of \eqref{id:partial_sK} with respect to the measure $s^\theta ds$ over $[0,r]$ obviously converge to 0 as $t\to\infty$. Concerning the remaining terms, their integration with respect to the measure $s^\theta ds$ over $[0,r]$ are constant multiples of the following integrals
		\begin{equation}\label{int1}
			\int_0^r s^{\gamma+\theta}\int_0^{t-s}(u+s)^{-\gamma-2}u^{-\gamma}du\,,\quad\int_0^r s^{\eta+\theta}\int_0^{t-s} e^{\mu(t-s-u)}(u+s)^{-\gamma-1}u^{-\gamma}duds
		\end{equation}
		and
		\begin{equation}\label{int2}
			 \int_0^r s^{\eta+\theta}\int_0^{t-s} e^{\mu(t-s-u)}(u+s)^{-\gamma}u^{-\gamma}duds\,,
		\end{equation}
		where $\eta$ ranges in $\{\gamma,\gamma-1\}$.
		The integrals in \eqref{int1} can be treated in the same way. Indeed, we note that for $\kappa\in\{\gamma+1,\gamma+2\} $
		\begin{equation*}
			\int_0^{t-s}e^{\mu(t-s-u)}(u+s)^{-\kappa}u^{-\gamma}du\le\int_0^\infty (u+s)^{-\kappa}u^{-\gamma}du=s^{1- \kappa- \gamma} \int_0^\infty (u+1)^{-\kappa}u^{-\gamma}du
		\end{equation*}
		and since $\theta>\gamma$,
		\begin{equation*}
			\int_0^r s^{\gamma+\theta}s^{1- \kappa- \gamma}ds<\infty\,.
		\end{equation*}
		We can apply \eqref{tmp:LH} and dominated convergence theorem to conclude that the integrals in \eqref{int1} converge to $0$ as $t\to\infty$. When $\kappa=\gamma$ the above argument breaks down because $\int_0^\infty (u+1)^{-\gamma}u^{-\gamma}du$ is infinite. We adopt a different strategy for the integrals in \eqref{int2}. Let $\rho$ be a fixed number in $(1- \gamma,1)$. Using the elementary estimate $e^{-x}\lesssim x^{-\rho}$ for all $x>0$, we get
		\begin{equation*}
			\int_0^{t-s}e^{\mu(t-s-u)}(u+s)^{-\gamma}u^{\gamma}du\lesssim s^{-\gamma}\int_0^{t-s}(t-s-u)^{-\rho}u^{-\gamma}du\lesssim s^{-\gamma}(t-s)^{1- \rho- \gamma}\,.
		\end{equation*}
		Hence, for $\eta\in\{\gamma,\gamma-1\}$ and $t>r$, we have 
		\begin{equation*}
			\int_0^r s^{\eta+\theta}\int_0^{t-s}e^{\mu(t-s-u)}(u+s)^{-\gamma}u^{-\gamma}duds
			\lesssim \int_0^r s^ {\eta+ \theta- \gamma} (t-s)^{1- \rho- \gamma}ds\,.
		\end{equation*}
		The integral on the right-hand side is at most $(t-s)^{1- \rho- \gamma	} \int_0^r s^ {\eta+ \theta- \gamma} ds$, which converges to $0$ as $t\to\infty$. Therefore, \eqref{tmp:633} is proved. 
	\end{proof}
	\begin{lemma}
		For fixed $\mu<0$ and $r\ge0$, with probability one, the map $t\mapsto\int_0^rK^\mu_H(t,s)dW_s$ is uniformly continuous on $[r+1,\infty)$.
	\end{lemma}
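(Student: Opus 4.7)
The plan is to reduce the claim to showing that a continuous function on $[r+1,\infty)$ admits a finite limit at infinity, whence uniform continuity follows automatically. Specifically, by Lemma \ref{lem:youngK} we may write, with $\PP$-probability one,
\begin{equation*}
\Phi(t) := \int_0^r K^\mu_H(t,s) dW_s = K^\mu_H(t,r) W_r - \int_0^r \partial_s K^\mu_H(t,s) W_s ds
\qquad \forall t > r.
\end{equation*}
It suffices to prove that $\Phi$ is continuous on $[r+1,\infty)$ and that $\lim_{t\to\infty}\Phi(t)$ exists; since any real-valued continuous function on $[r+1,\infty)$ with a finite limit at infinity is uniformly continuous (it is bounded, and uniform continuity on compact pieces extends to the whole half-line because the tail oscillation can be made arbitrarily small), this yields the conclusion.

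The existence of the limit $\lim_{t\to\infty}\Phi(t)=0$ is precisely the content of Lemma \ref{lem:fOUtypmem} (the typical-memory property for the fractional Ornstein--Uhlenbeck case). So the remaining task is continuity on $[r+1,\infty)$. First, $t\mapsto K^\mu_H(t,r)$ is manifestly continuous on $(r,\infty)$ from either of the explicit expressions \eqref{kernel1} or \eqref{kernel2}. Second, for the integral term, I would invoke dominated convergence: for $s\in(0,r]$ fixed, $t\mapsto \partial_s K^\mu_H(t,s)$ is continuous on $[r+1,\infty)$ (by inspection of the formula \eqref{id:partial_sK} derived in the proof of Lemma \ref{lem:youngK}), and for each compact subinterval $[r+1,T]$ of $[r+1,\infty)$ one obtains a bound
\begin{equation*}
\sup_{t\in[r+1,T]} |\partial_s K^\mu_H(t,s)| \lesssim 1 + s^{-\gamma} + s^{-\gamma-1},
\qquad s\in(0,r],
\end{equation*}
using the same two elementary estimates on $\int_0^{t-s}e^{\mu(t-s-u)}u^{-\gamma}(u+s)^{-\theta}du$ used in the proof of Lemma \ref{lem:youngK}, together with the bound $(t-s)\ge 1$ that absorbs the $t$-dependent prefactors. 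Because the sample paths of $W$ are almost surely H\"older continuous with exponent $\theta<1/2$ satisfying $\theta>\gamma$, one has $|W_s|\lesssim s^\theta$, and $s^{\theta}(1+s^{-\gamma}+s^{-\gamma-1})$ is integrable on $(0,r]$. Dominated convergence then gives continuity of $t\mapsto \int_0^r \partial_s K^\mu_H(t,s) W_s ds$ on each $[r+1,T]$, hence on $[r+1,\infty)$.

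The main technical obstacle is really just the verification of the locally uniform bound on $|\partial_s K^\mu_H(t,s)|$ for $t$ in compact subsets of $[r+1,\infty)$; this is essentially a recycling of the partial-derivative bookkeeping carried out in Lemma \ref{lem:youngK}, the only new feature being that one must track how the bounds depend on $t$ (the condition $t-s\ge 1$ for $s\in[0,r]$, $t\ge r+1$ suffices to keep all $t$-singularities away). For the case $H>\tfrac12$ the simpler expression \eqref{kernel2} makes the argument shorter, as noted at the end of the proof of Lemma \ref{lem:youngK}.
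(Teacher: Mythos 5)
Your argument is correct, but it takes a different route from the paper. The paper proves this lemma the same way it proves the fractional Brownian case (Lemma \ref{lem:fbmregmem}): via the representation of Lemma \ref{lem:youngK}, it reduces uniform continuity to the finiteness of $\sup_{t\ge r+1}|\partial_t K^\mu_H(t,r)|$ and $\sup_{t\ge r+1}\int_0^r|\partial_t\partial_s K^\mu_H(t,s)||W_s|\,ds$, i.e.\ to a uniform-in-$t$ bound on the $t$-derivative of the pathwise expression, which yields a Lipschitz bound on all of $[r+1,\infty)$; this requires computing and estimating the mixed derivative $\partial_t\partial_s K^\mu_H$, the ``lengthy but straightforward'' part the paper skips. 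You instead avoid all second-order computations: you establish plain continuity of $\Phi(t)=K^\mu_H(t,r)W_r-\int_0^r\partial_s K^\mu_H(t,s)W_s\,ds$ on compacts, using only the first-order bound on $\partial_s K^\mu_H$ already recorded in the proof of Lemma \ref{lem:youngK} (made locally uniform in $t$ via $t-s\ge1$, which is legitimate since the $t$-dependent prefactors in \eqref{id:partial_sK} are bounded on compact $t$-intervals), and then import the a.s.\ limit $\Phi(t)\to0$ from Lemma \ref{lem:fOUtypmem}, concluding by the elementary fact that a continuous function on a half-line with a finite limit at infinity is uniformly continuous. This is not circular (Lemma \ref{lem:fOUtypmem} precedes the present lemma and does not use it), and the limit there is indeed the limit of the same pathwise version $\Phi$. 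What each approach buys: yours is shorter and recycles already-proved estimates, but it works only because $\mu<0$ makes the memory integral itself converge (for $\mu=0$ and $H>\frac12$ it does not, so the paper's derivative method is the one that generalizes to Lemma \ref{lem:fbmregmem}); the paper's method gives the quantitatively stronger uniform Lipschitz control on $[r+1,\infty)$ and does not lean on the decay at infinity. The only soft spots in your write-up --- continuity of $t\mapsto\partial_s K^\mu_H(t,s)$ ``by inspection'' and the deferral of the $H>\frac12$ (and $H=\frac12$) cases --- are of the same nature as the omissions the paper itself makes, and are easily filled by dominated convergence from \eqref{kernel1}--\eqref{kernel2}.
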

	\begin{proof} The proof resembles that of Lemma \ref{lem:fbmregmem}, involving computations of the partial derivatives of $K^\mu_H(t,s)$.  It is lengthy but straightforward. We skip the details.
	\end{proof}
	\begin{theorem}
		Let $\xi$ be the process defined in \eqref{def:xi} with $\mu<0$ and $\xi[0,r]$ be a typical memory of non-negative length $r$. Let $X=\{X_t,t\ge r\}$ be a branching particle system whose initial memory is $\xi[0,r]$ and the spatial movement of each particle follows the law of $\xi$ conditioned on $\cgg_r$. Assuming that \ref{c0} is satisfied and the map $t\to\int_0^r K^\mu_H(t,u)dW_u$ is uniformly continuous on $[ r+1,\infty)$, then with $\PP_{\xi[0,r]}$-probability one, for every bounded continuous functions $f$ on $\RR^d$,
		\begin{equation*}
			\lim_{t\to\infty}e^{-\beta t}X_t(f)=e^{-\beta r} (2 \pi \ell_H^2)^{-\frac d2}F\int_{\RR^d}e^{-\frac{|y|^2}{2\ell_H^2}} f(y)dy 
		\end{equation*}
		where $\ell_H$ is a finite positive constant given by
		\begin{equation}\label{def:ell}
			\ell_H^2=c_1(H)\lambda^2|\mu|^{-2H} \int_{\RR}\frac{|x|^{1-2H}}{1+x^2}dx\,.
		\end{equation}
	\end{theorem}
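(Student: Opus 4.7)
The plan is to apply Theorem \ref{thm:SLLN} to the branching system $X$, identifying $\ell$ and $U_\infty$ in \eqref{eqn:slln} and verifying conditions \ref{c0}--\ref{con:sl.tnmax}. By Proposition \ref{prop:xiVol}, the underlying Volterra representation has kernel $\lambda K^\mu_H$ and drift $U_tx=e^{\mu t}x$, so
\begin{equation*}
\sigma^2(t)=\lambda^2\int_0^t|K^\mu_H(t,u)|^2du=c_1(H)\lambda^2\int_\RR\frac{1-2e^{\mu t}\cos(xt)+e^{2\mu t}}{\mu^2+x^2}|x|^{1-2H}dx,
\end{equation*}
the second equality being the one obtained during the proof of \eqref{eqn:m2xi}. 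Since $\mu<0$, dominated convergence (the integrand is dominated by $2|x|^{1-2H}/(\mu^2+x^2)$, which is integrable) gives $\sigma(t)\to\ell_H$, while boundedness of $\sigma$ and $\beta>0$ force $e^{-\beta t}\sigma^d(t)\to 0$; this verifies \ref{c1} with $\ell=\ell_H\in(0,\infty)$. Since $e^{\mu t}/\sigma(t)\to 0$, condition \ref{con:U} holds with $U_\infty\equiv 0$.

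For \ref{con:s1s} I would choose $b(t)=t-\delta\log t$ for a small $\delta>0$. Then $e^{-\beta b(t)}\sigma^d(t)=t^{\beta\delta}e^{-\beta t}\sigma^d(t)\to 0$ and $b(t)<t$ eventually. It remains to show $\sigma_1(t,b(t))\sqrt{\log t}/\sigma(t)\to 0$; since $\sigma(t)\to\ell_H>0$, this reduces to $\sigma_1^2(t,b(t))=o(1/\log t)$. I would establish this via $\sigma_1^2(t,b(t))=\sigma^2(t)-\sigma_2^2(t,b(t))$, showing that $\sigma_2^2(t,b(t))\to\ell_H^2$: the exponential factor $e^{\mu(t-u)}$ in \eqref{kernel1}--\eqref{kernel2} concentrates the $L^2$-mass of $K^\mu_H(t,\cdot)$ in a window of order $O(1)$ below $t$, so the residual mass on $[0,b(t)]$ decays at worst polynomially in $t$ once $t-b(t)=\delta\log t\to\infty$, which is far stronger than what \ref{con:s1s} requires.

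For \ref{con:sl.tnmax} I would take $t_n=r+n^\kappa$ with any $\kappa\in(0,1)$. The scaling conditions in \eqref{limtn} are immediate because $\sigma(t)\to\ell_H>0$ and $t_{n+1}-t_n\to 0$; the series $\sum e^{-\beta b(t_{n-1})}\sigma^d(t_n)$ converges rapidly since $b(t_{n-1})\to\infty$ polynomially. The tail series \eqref{sumPmax} is controlled by applying Proposition \ref{prop:estPmax} on each $[t_n,t_{n+1}]$: the Garsia--Rodemich--Rumsey term contributes $\sigma^d(t_n)\cdot C\epsilon^{-p}|t_{n+1}-t_n|^{pH}\lesssim n^{(\kappa-1)pH}$, which is summable for $p>1/((1-\kappa)H)$; the first indicator vanishes for all large $n$ because $|e^{\mu t_{n+1}}-e^{\mu t_n}|$ is exponentially small; the second indicator vanishes for all large $n$ by the uniform continuity hypothesis on $t\mapsto\int_0^rK^\mu_H(t,u)dW_u$ combined with $t_{n+1}-t_n\to 0$. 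Substituting $\ell=\ell_H$ and $U_\infty\xi_0=0$ into \eqref{eqn:slln} gives the claimed Gaussian limit. The main obstacle is the quantitative kernel decay in \ref{con:s1s}: $K^\mu_H(t,u)$ is not a simple function of $t-u$, so the interaction between the polynomial factors $u^{H-1/2}(u-s)^{H-3/2}$ and the exponential damping $e^{\mu(t-u)}$ must be balanced carefully near both $u=t$ (boundary) and $u=0$ (singularity when $H<1/2$).
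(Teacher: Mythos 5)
Your overall route is the same as the paper's: verify \ref{c0}--\ref{con:sl.tnmax} for the kernel $\lambda K^\mu_H$ and flow $U_tx=e^{\mu t}x$ of Proposition \ref{prop:xiVol}, identify $\ell=\ell_H$ and $U_\infty\equiv0$, take $t_n=r+n^\kappa$ and control \eqref{sumPmax} via Proposition \ref{prop:estPmax}; your checks of \ref{c1}, \ref{con:U} and \ref{con:sl.tnmax} are fine. The genuine gap is your verification of \ref{con:s1s}. The heuristic behind your choice $b(t)=t-\delta\log t$ --- that the factor $e^{\mu(t-u)}$ concentrates the $L^2$-mass of $K^\mu_H(t,\cdot)$ within $O(1)$ of $t$, so that $\sigma_1^2(t,b(t))$ is polynomially small in $t$ --- is incorrect: the exponential sits inside the integral defining the kernel, while the kernel itself decays only polynomially in $t-s$ (this is precisely the long memory of the model). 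Indeed, for $H>\tfrac12$, restricting the $u$-integral in \eqref{kernel2} to $[t-1,t]$ gives $K^\mu_H(t,s)\gtrsim s^{\frac12-H}t^{H-\frac12}(t-s)^{H-\frac32}$ for $t-s\ge2$, so that
\begin{equation*}
\sigma_1^2\bigl(t,t-\delta\log t\bigr)\ \gtrsim\ \int_{t/2}^{t-\delta\log t}(t-s)^{2H-3}\,ds\ \asymp\ (\delta\log t)^{2H-2},
\end{equation*}
and hence $\sigma_1(t,b(t))\sqrt{\ln t}/\sigma(t)\asymp(\log t)^{H-\frac12}\to\infty$ whenever $H>\tfrac12$: condition \ref{con:s1s} fails for your $b$ exactly in the long-range-dependence regime, which is the case of main interest. (For $H=\tfrac12$ the kernel is genuinely exponential and your choice works; for $H<\tfrac12$ the exponent $(\log t)^{2H-1}$ does tend to $0$, but you give no actual estimate, and the singular terms at $s=0$ in \eqref{kernel1} still have to be controlled.)

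The paper's proof closes precisely this step: it takes $b(t)=t^\delta$ with $\delta\in(0,1)$ small, so that $t-b(t)\sim t$ while $e^{-\beta t^\delta}\sigma^d(t)\to0$ still holds because $\sigma$ is bounded, and then proves $\ln t\int_0^{t^\delta}|K^\mu_H(t,s)|^2ds\to0$ by direct kernel bounds: for $H>\tfrac12$ via \eqref{kernel2} and the L'H\^opital-type limit \eqref{tmp:LH}, yielding a bound of order $(\ln t)\,t^{2\gamma}(t-t^\delta)^{2\gamma-2}t^{\delta(1-2\gamma)}$ with $\gamma=H-\tfrac12$; for $H<\tfrac12$ via \eqref{kernel1}, splitting $(u+s)^{-\eta}\lesssim u^{-\alpha}s^{\alpha-\eta}$ to tame the $s=0$ singularity. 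This quantitative estimate (or, alternatively, a corrected choice of $b$ with $t-b(t)$ growing at least like $(\log t)^{p}$, $p>\frac1{2-2H}$, together with an actual proof of the kernel decay) is what your argument is missing; without it the application of Theorem \ref{thm:SLLN} is not justified for $H>\tfrac12$.
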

	\begin{proof}
		\ref{c1} and \ref{con:U} follow immediately from \eqref{eqn:Volxi} and \eqref{eqn:lim-}, with $\ell=\ell_H$, $U_t(x)=e^{\mu t}x$ and $U_\infty\equiv 0$. 

		We turn our attention to \ref{con:s1s} with $b(t)=t^{\delta}$ for $\delta\in(0,1)$ sufficiently small, which will be specified later. In what follows, we denote $\gamma=|H-\frac12|$ and note that $\gamma\in(0,\frac12)$. We need to show that
		\begin{equation*}
			\lim_{t\to\infty}\ln t \int_0^{t^\delta}|K^\mu_H(t,s)|^2ds =0\,.
		\end{equation*}
		If $H=\frac12$, $K^\mu_{1/2}(t,s)=e^{\mu(t-s)}$, it is obvious to verify the above limit. Consider the case $H>\frac12$, from \eqref{kernel2} we have
		\begin{equation*}
			|K^\mu_H(t,s)|\lesssim s^{-\gamma}t^{\gamma}\int_0^{t-s}e^{\mu(t-s-u)}u^{\gamma-1}du\,.
		\end{equation*}
		In addition, it follows from \eqref{tmp:LH} that
		\begin{equation*}
			\int_0^{t-s}e^{\mu(t-s-u)}u^{\gamma-1}du\lesssim (t-s)^{\gamma-1}\,.
		\end{equation*}
		Hence, 
		\begin{align*}
			\ln t\int_0^{t^\delta}|K^\mu_H(t,s)|^2ds\lesssim \ln t\int_0^{t^\delta}t^{2 \gamma}(t-s)^{2\gamma-2}s^{-2 \gamma}ds
			\lesssim (\ln t) t^{2 \gamma}(t-t^\delta)^{2 \gamma-2} t^{\delta(1-2 \gamma)}
		\end{align*}
		which converges to $0$ since $\delta<1$. Consider the case $H<\frac12$. In view of \eqref{kernel1}, it suffices to show
		\begin{equation*}
			\lim_{t\to\infty} \ln t\int_0^{t^{\delta}}|t^{-\gamma}(t-s)^{-\gamma}s^\gamma|^2ds 	=0
		\end{equation*}
		and
		\begin{equation}\label{tmp:22}
			\lim_{t\to\infty}\ln t\int_0^{t^\delta}\lt|s^\gamma \int_0^{t-s}e^{\mu(t-s-u)}(u+s)^{-\eta}u^{-\gamma}du \rt|^2ds 	=0
		\end{equation} 
		for $\eta\in\{\gamma,\gamma+1\}$.
		The first limit can be easily handled:
		\begin{align*}
			\ln t\int_0^{b(t)} |t^{-\gamma}(t-s)^{-\gamma}s^\gamma|^2ds
			\lesssim (\ln t) t^{-2 \gamma}(t-t^{\delta})^{-2\gamma}t^{\delta(1+2 \gamma)}
		\end{align*}
		which converges to $0$ as soon as we choose $\delta<\frac{4 \gamma}{1+2 \gamma}$. To show \eqref{tmp:22}, let us fix $\alpha$ in $(0,\gamma)$ if $\eta=\gamma$ and in $(\frac12,1- \gamma)$ if $\eta=\gamma+1$. From Jensen's inequality, we see that
		\begin{equation*}
			(u+s)^{-\eta}\lesssim u^{-\alpha} s^{\alpha-\eta}\,,
		\end{equation*}
		and hence, 
		\begin{equation*}
			\int_0^{t-s}e^{\mu(t-s-u)}(u+s)^{-\eta}u^{-\gamma}du\lesssim s^{\alpha- \eta}\int_0^{t-s}e^{\mu(t-s-u)}u^{-\alpha-\gamma}du\lesssim s^{\alpha- \eta}(t-s)^{-\alpha - \gamma}\,,
		\end{equation*}
		where we have used \eqref{tmp:LH} to bound
		\begin{equation*}
			\int_0^{t-s}e^{\mu(t-s-u)}u^{-\alpha-\gamma}du \lesssim (t-s)^{-\alpha-\gamma}\,.
		\end{equation*}
		It follows that
		\begin{align*}
			\int_0^{t^\delta}\lt|s^\gamma \int_0^{t-s}e^{\mu(t-s-u)}(u+s)^{-\eta}u^{-\gamma}du \rt|^2ds
			&\lesssim \int_0^{t^\delta}s^{2(\gamma+\alpha- \eta)}(t-s)^{-2(\alpha+\gamma)} ds
			\\&\lesssim (t-t^\delta)^{-2(\alpha+\gamma)}t^{\delta(2 \gamma+2 \alpha-2 \eta+1)}\,.
		\end{align*}
		We note that these estimates are valid because of the chosen range of $\alpha$. Hence, \eqref{tmp:22} is verified as soon as we choose $\delta<\frac{2 \alpha+2 \gamma}{2 \gamma+2 \alpha-2 \eta+1}$.

		Condition \ref{con:sl.tnmax} is satisfied with $t_n=r+ n^\kappa$ for any fixed $\kappa$ in $(0,1)$. The argument is similar to the proof of Theorem \ref{thm:fbm}. We omit the details.
\end{proof}

We conclude with a remark. One may wonder whether the limiting measure really depends on the initial position $\xi_0$. The answer is affirmative. Indeed, let $\xi_0\in\RR^d$ be fixed and consider a branching system such that the particle motions follow the law of the Gaussian process in \eqref{exp1}
and the branching factor $\beta>d$. It is easy to see that $\sigma^2(t)=(e^{2t}-1)/2 $, $\ell=\infty$ and $U_\infty=\lim_{t\to\infty}\frac{e^tId}{\sigma(t)}=Id$. The conditions \ref{c1} and \ref{con:U} are satisfied. To see that \ref{con:ws1s} holds, choose $b(t)=\kappa t$ for some constant $\kappa\in(\frac d \beta,1)$. Verifying the typical memory assumption is trivial. Hence, the conclusion of Theorem \ref{thm:wlln} is valid in this case and the right-hand side of \eqref{eqn:4.3} reads
\begin{equation*}
	e^{-\beta r} (2 \pi)^{-\frac d2}F e^{-\frac{|\xi_0|^2}2} \int_{\RR^d}f(y)dy\,.
\end{equation*}

\section{Acknowledgements}
Good refereeing is gratefully acknowledged by the authors.


\bibliography{../fbm}

\end{document}